\documentclass[11pt]{amsart}
\usepackage{amsmath, amssymb, amscd, mathrsfs, url, pinlabel,verbatim}
\usepackage[pagebackref]{hyperref}
\usepackage[margin=1in,marginparwidth=0.75in,centering,letterpaper,dvips]{geometry}
\usepackage{color,dcpic,latexsym,graphicx,epstopdf,comment}
\usepackage[all]{xy}
\usepackage[dvipsnames]{xcolor}
\usepackage{tikz,tikz-cd,pgfplots}
\usepackage{upquote,tabularx,textcomp}
\usepackage[shortlabels]{enumitem}
\usepackage{mathtools}
\usepackage[color=blue!20!white,textsize=tiny]{todonotes}

\title{Ribbon concordance and fibered predecessors, II: the general case}

\author[John A. Baldwin]{John A. Baldwin}
\address{Department of Mathematics \\ Boston College}
\email{john.baldwin@bc.edu}

\author[Jonathan Hanselman]{Jonathan Hanselman}
\address{Department of Mathematics \\ Indiana University}
\email{jonahans@iu.edu}

\author[Steven Sivek]{Steven Sivek}
\address{Department of Mathematics\\Imperial College London}
\email{s.sivek@imperial.ac.uk}

\makeatletter
\newtheorem*{rep@theorem}{\rep@title}
\newcommand{\newreptheorem}[2]{%
\newenvironment{rep#1}[1]{%
 \def\rep@title{#2 \ref{##1}}%
 \begin{rep@theorem}}%
 {\end{rep@theorem}}}
\makeatother

\newtheorem {theorem}{Theorem}
\newreptheorem{theorem}{Theorem}
\newtheorem {lemma}[theorem]{Lemma}
\newtheorem {proposition}[theorem]{Proposition}
\newtheorem {corollary}[theorem]{Corollary}
\newtheorem {conjecture}[theorem]{Conjecture}

\newtheorem {question}[theorem]{Question}

\numberwithin{equation}{section}
\numberwithin{theorem}{section}

\theoremstyle{definition}
\newtheorem{definition}[theorem]{Definition}

\newtheorem{remark}[theorem]{Remark}
\newtheorem*{remark*}{Remark}

\newenvironment{usetheoremcounterof}[1]{%
  \renewcommand\thetheorem{\ref{#1}}\theorem}{\endtheorem\addtocounter{theorem}{-1}}

\newlist{pcases}{enumerate}{1}
\setlist[pcases]{
  label=\bf{Case~\arabic*:}\protect\thiscase.~,
  ref=\arabic*,
  align=left,
  labelsep=0pt,
  leftmargin=0pt,
  labelwidth=0pt,
  parsep=0pt
}
\newcommand{\case}[1][]{%
  \if\relax\detokenize{#1}\relax
    \def\thiscase{}%
  \else
    \def\thiscase{~#1}%
  \fi
  \item
}

\newcommand{\Mor}{\mathrm{Mor}}

\newcommand{\Z}{\mathbb{Z}}

\newcommand{\R}{\mathbb{R}}

\newcommand{\F}{\mathbb{F}}
\newcommand{\Q}{\mathbb{Q}}

\renewcommand{\phi}{\varphi}

\newcommand\hfk{\mathit{HFK}}
\newcommand\hfkhat{\widehat{\hfk}}

\newcommand\HFhat{\widehat{\mathit{HF}}}
\newcommand\HFK{\widehat{\mathit{HFK}}}

\newcommand{\inr}{\operatorname{int}}

\newcommand{\HF}{\mathit{HF}}

\newcommand{\gr}{\operatorname{gr}}

\newcommand{\pt}{\mathrm{pt}}

\newcommand{\CFD}{\widehat{\mathit{CFD}}}

\newcommand{\Alg}{\mathcal{A}}

\usetikzlibrary{calc,intersections}
\tikzset{every picture/.style=thick}
\tikzset{link/.style = { white, double = black, line width = 1.75pt, double distance = 1.25pt, looseness=1.75 }}
\tikzset{crossing/.style = {draw, circle, dotted, minimum size=0.5cm, inner sep=0, outer sep=0}}
\pgfplotsset{compat=1.12}

\begin{document}

\begin{abstract}
The first and third authors recently proved that for each  knot $K\subset S^3$ there are only finitely many hyperbolic fibered knots which are ribbon concordant to $K$. In this paper, we remove the \emph{hyperbolic} constraint, proving that every knot in $S^3$ has only finitely many fibered predecessors under ribbon concordance. The key new input is an inequality relating the knot Floer homology of a generalized satellite knot with that of its companion, proved via the immersed curves formulation of bordered Heegaard Floer homology, which should be  of independent interest. Our work, together with results of Kojima--McShane, also leads to an explicit upper bound on the Gromov norm of the complement of any fibered predecessor of a knot $K \subset S^3$, in terms of the arc index and genus of $K$.
\end{abstract}

\maketitle

\section{Introduction}
\label{sec:intro}

Agol proved in \cite{agol-ribbon} that ribbon concordance defines a partial order on the set of knots in $S^3$, affirming a conjecture of Gordon from \cite{gordon-ribbon}; we therefore write $J\leq K$ to mean that a knot $J$ is ribbon concordant to the knot $K$. In the same paper, Gordon made the still-open conjecture  that there is no infinite descending chain of distinct ribbon concordant knots, $\dots \leq K_3 \leq K_2\leq K_1$. In \cite{bs-ribbon}, the first and third authors proposed the \emph{a priori} stronger conjecture that each knot in $S^3$ has only finitely many predecessors under ribbon concordance:

\begin{conjecture}
\label{conj:pred}
For each knot $K\subset S^3$, there are only finitely many knots $J\leq K$.
\end{conjecture}

In support of this conjecture, they proved that each knot in $S^3$ has only finitely many hyperbolic fibered predecessors \cite[Theorem 1.2]{bs-ribbon}. The main goal of this paper is to remove the hyperbolic constraint, and prove this finiteness result for \emph{all} fibered predecessors:

\begin{theorem} \label{thm:main}
For each knot $K\subset S^3$, there are only finitely many fibered knots $J\leq K$.
\end{theorem}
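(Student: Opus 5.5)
Our plan is to reduce Theorem~\ref{thm:main} to two kinds of finiteness: bounded knot Floer homology, and bounded geometric complexity of the fibration. We first recall the standard ribbon concordance facts. If $J\le K$, then $\HFK(J)$ injects into $\HFK(K)$ as a bigraded summand (Zemke). Hence $\dim\HFK(J)\le\dim\HFK(K)$, $g(J)\le g(K)$, and, since $J$ is fibered, $\HFK(J,g(J))\cong\F$.

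So it suffices to show the following. There are only finitely many fibered knots $J$ with $\dim\HFK(J)\le N$ and $g(J)\le g$.

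A fibered knot is determined by the genus-$g(J)$ surface $\Sigma$ with one boundary component together with its monodromy $\phi$, up to conjugacy. We apply the Nielsen--Thurston decomposition to $\phi$, which is equivalently the JSJ decomposition of the exterior $S^3\setminus\nu J$. If the JSJ decomposition is trivial, then $J$ is either a torus knot or hyperbolic:
\begin{itemize}
\item Torus knots of bounded genus are finite in number.
\item The hyperbolic case is \cite[Theorem 1.2]{bs-ribbon}. There the key point is that $\dim\HFK$ bounds the dimension of the fixed-point Floer homology of $\phi$ at the next-to-top Alexander grading, $\HFK(J,g-1)$, and hence bounds the number of periodic points of the pseudo-Anosov map. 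This yields finitely many possibilities for the mapping class.
\end{itemize}

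In general, $J$ is obtained by repeated satellite operations from torus and hyperbolic pieces: the root JSJ piece is a pattern in a solid torus (or a link complement) and the companions are fibered knots of smaller complexity. Since $J$ is fibered, every companion and every pattern piece is fibered as well.

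We argue by induction on the number of JSJ tori, which is bounded in terms of $g$ because each piece contributes to the genus or to the Euler characteristic of the fiber. The key new input is the inequality promised in the abstract. For a generalized satellite $P(C_1,\dots,C_k)$ with fibered (or at least nontrivial) companions, we want
\[
\dim \HFK(C_i)\le \dim\HFK(P(C_1,\dots,C_k)),
\]
and likewise a bound on a suitable Floer invariant of the pattern. This would bound the knot Floer homology of every JSJ piece in terms of $\dim\HFK(J)\le N$. Each piece's genus is also bounded, because fiber surfaces restrict to fibers of the pieces and the winding numbers are controlled by genus. Then the finiteness results for torus and hyperbolic pieces, applied to the pieces viewed as fibered knots or links in solid tori, leave only finitely many choices for each piece.

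What remains is the gluing data. Both the winding numbers and the number of components are bounded, and the gluings are along incompressible tori with fixed meridian/longitude framings. So finitely many pieces give finitely many $J$.

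The main obstacle is proving the satellite inequality. We would compute it via immersed curves: $\HFK$ of the satellite is a Lagrangian Floer pairing, in the punctured torus, between the immersed curve $\widehat{HF}(S^3\setminus\nu C)$ and the curve associated to the doubly pointed bordered pattern. The strategy is to show that each generator of $\HFK(C)$, equivalently each intersection of the companion's curve with a vertical line through the puncture, forces at least one distinct intersection with the pattern curve. This should hold whenever the pattern has nonzero winding number, or is otherwise essential, by a homotopy argument lifting the curves to the covering space $\R^2\setminus\Z^2$.

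A secondary obstacle is extending the hyperbolic finiteness to patterns, meaning hyperbolic fibered links in solid tori. We would handle this by applying the same fixed-point Floer bound to the pseudo-Anosov component of $\phi$, using the sutured Floer homology of the pattern exterior.
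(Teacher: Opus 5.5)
\textbf{The gap is your reduction.} After Zemke's injection you keep only $\dim\HFK(J)\le N$ and $g(J)\le g$, and claim that only finitely many fibered knots satisfy these bounds. This is neither justified nor true. Through $\HFK(J,g-1)$ and the Ni/Ghiggini--Spano results, $\HFK(J)$ bounds only $\#\mathrm{Fix}(\phi)$ for the monodromy itself. It says nothing about $\#\mathrm{Fix}(\phi^n)$ for $n\ge 2$. Finiteness of pseudo-Anosov conjugacy classes requires a bound on the dilatation $\lambda(\phi)=\lim_n \#\mathrm{Fix}(\phi^n)^{1/n}$, which involves all the iterates. The reduced statement is in fact false. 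Hedden--Watson showed that twisting a nontrivial band sum of two unknots gives infinitely many distinct knots with isomorphic $\HFK$. Applied to a fibered band sum such as $4_1\#4_1$, every knot in the family is fibered of the same genus, since $\HFK$ detects genus and fiberedness. Your description of \cite[Theorem~1.2]{bs-ribbon} is also off. There, period-$n$ points are controlled by the lift $J_n\subset\Sigma_n(J)$, whose monodromy is $\phi^n$. The needed bound $\dim\HFK(\Sigma_n(J),J_n)\le\dim\HFK(\Sigma_n(K),K_n)\le(\delta!)^n$ uses the ribbon concordance itself, lifted to branched covers, not merely $\dim\HFK(J)\le\dim\HFK(K)$.

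\textbf{What is missing is running the whole argument on cyclic branched covers.}
\begin{enumerate}
\item Theorems~\ref{thm:branched-cover-ribbon} and~\ref{thm:hfk-cover} show that the lifted concordance complement is a ribbon $\Z/2$-homology cobordism whenever $n$ avoids a finite set of primes. This gives $\dim\HFK(\Sigma_n(J),J_n)\le(\delta!)^n$ for infinitely many primes $n$.
\item To pass this bound down the JSJ tree, take these primes coprime to the winding numbers. Then $J_n$ is a \emph{generalized} satellite in the rational homology sphere $\Sigma_n(J)$, with companion $C_n\subset\Sigma_n(C)$. This is why Theorem~\ref{thm:gensat} is needed for generalized satellites. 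Your $S^3$ inequality $\dim\HFK(C_i)\le\dim\HFK(J)$ again bounds only an $n=1$ quantity.
\item The outermost pseudo-Anosov piece $\psi_0$ is controlled by $\#\mathrm{Fix}(\psi_0^n)\le N(\phi^n)\le\dim\HFK(\Sigma_n(J),J_n)$. A sutured invariant of the pattern exterior would not do this, since like your hyperbolic step it sees only the first iterate.
\item Your gluing step is too quick. The conjugacy class of $\psi_0$ determines the outer piece only up to Dehn twists along $\partial\Sigma$, so the meridian of $J$ is not yet fixed. The paper uses Proposition~\ref{prop:sat-meridian} to pin down the companion meridians, and the fractional Dehn twist bound $|c|\le 1$ in Theorem~\ref{thm:determine-C}, which leaves at most three choices.
\end{enumerate}
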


Since ribbon predecessors of fibered knots are fibered \cite{silver,kochloukova}, this proves Conjecture \ref{conj:pred} for all fibered knots:

\begin{corollary}
For each fibered knot $K\subset S^3$, there are only finitely many knots $J\leq K$.\qed
\end{corollary}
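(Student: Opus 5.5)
The statement to prove is the Corollary, and it follows directly from Theorem~\ref{thm:main} together with the cited fact that ribbon predecessors of fibered knots are fibered. Let $K\subset S^3$ be a fibered knot and suppose $J\leq K$. The first step is to check that $J$ is fibered. By \cite{silver,kochloukova}, if $J$ is ribbon concordant to a fibered knot $K$, then $J$ is itself fibered. For orientation, the underlying mechanism is group-theoretic. A ribbon concordance $C$ from $J$ to $K$ induces a surjection $\pi_1(S^3\setminus K)\twoheadrightarrow\pi_1((S^3\times I)\setminus C)$ and an injection $\pi_1(S^3\setminus J)\hookrightarrow\pi_1((S^3\times I)\setminus C)$, by Gordon. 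The commutator subgroup of the knot group of $K$ is finitely generated, and this is inherited by $J$ via these maps and the Bieri--Neumann--Strebel-type arguments of Kochloukova and Silver. Stallings' theorem then gives fiberedness of $J$. We only need the conclusion, which we cite.

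Hence every predecessor $J\leq K$ lies in the set of fibered knots $J\leq K$. By Theorem~\ref{thm:main}, that set is finite. Therefore $\{J : J\leq K\}$ is finite, which proves the Corollary.

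The only substantive input here is the fiberedness of ribbon predecessors, and it is handled entirely by the citation. The real difficulty lies in Theorem~\ref{thm:main} itself, not in this deduction.
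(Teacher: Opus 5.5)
Your proof is correct and is exactly the paper's deduction: by \cite{silver,kochloukova} every $J\leq K$ is fibered, so the predecessors of $K$ are precisely its fibered predecessors, which form a finite set by Theorem~\ref{thm:main}. Your group-theoretic sketch glosses over the hard step, namely getting finite generation of the commutator subgroup of $\pi_1(S^3\setminus J)$ from that of the concordance group, which is where Kochloukova's theorem enters; since you rely only on the cited conclusion, this does not affect the argument.
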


The proof of \cite[Theorem 1.2]{bs-ribbon} first used Floer homology to bound the dilatation of the pseudo-Anosov monodromy of any hyperbolic fibered $J\leq K$, as well as the genus of $J$, and then appealed to the fact that there are only finitely many conjugacy classes of pseudo-Anosov homeomorphisms of a given surface with dilatation less than a fixed constant. If $J$ is not hyperbolic then its monodromy is either periodic or reducible. The periodic case is easy to deal with. The bulk of this paper deals with the reducible case. At heart, our strategy once again involves bounding the dilatations of the pseudo-Anosov components of the reducible monodromy, though there are serious obstacles to doing so by the methods in \cite{bs-ribbon}, as explained in \cite[\S 3]{bs-ribbon} and reiterated in \S\ref{ssec:proof} below. The key new input which allows us to bypass these difficulties is an inequality relating the knot Floer homology of a generalized satellite knot (see \S\ref{sec:generalized-satellite}) with that of its companion:

\begin{theorem}
\label{thm:gensat} Let $K\subset Y$ be a generalized satellite knot with companion $C \subset Z$, whose pattern $P$ has winding number $w\geq 1$. Then \[\dim\HFK(Z,C)\leq \dim\HFK(Y,K).\]
\end{theorem}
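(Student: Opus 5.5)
The plan is to prove Theorem~\ref{thm:gensat} using the immersed curves formulation of bordered Heegaard Floer homology, by splitting $Y$ along the essential torus $T=\partial N(C)$. One side is the companion exterior $Z\setminus N(C)$. The other side is the pattern piece: a manifold $M$ with torus boundary containing $K$. The companion exterior determines an immersed multicurve $\gamma_C$, possibly with local systems, in the punctured torus $\partial N(C)\setminus \pt$. Following the doubly pointed pairing theorem (Lipshitz--Ozsv\'ath--Thurston, and its immersed-curve version due to Chen and to Hanselman--Rasmussen--Watson), the pattern $(M,K)$ determines an immersed multicurve $\gamma_P$ in the torus with two basepoints $z,w$. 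Then
\[
\dim \HFK(Y,K)=\dim \HF(\gamma_C,\gamma_P),
\]
where the right-hand side is Lagrangian Floer homology in $T\setminus\{z,w\}$, i.e.\ minimal geometric intersection after homotopy in the twice-punctured torus, counted with local-system ranks. The trivial pattern, the core of a solid torus with winding number $1$, gives a single meridian line $\mu$ passing between $z$ and $w$. Hence $\dim\HFK(Z,C)=\dim\HF(\gamma_C,\mu)$. The theorem therefore reduces to the purely planar statement
\[
\dim\HF(\gamma_C,\gamma_P)\ \ge\ \dim\HF(\gamma_C,\mu)
\]
for every curve $\gamma_C$ that arises from a knot complement.

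To compare the two, we lift to the covering space $\widetilde T=\R^2\setminus(\text{lifts of } z,w)$, or to the intermediate cylinder cover associated to the longitude, where both $z$ and $w$ lift to rows of punctures. The winding number hypothesis should translate into a topological statement about $\gamma_P$. The algebraic intersection of $\gamma_P$ with an arc in $T$ joining $z$ to $w$ equals, up to sign, the winding number $w$ of $K$ in $M$. This is the homological content of the Alexander grading shift, since that arc traces a meridian of $K$ and $[K]=w\cdot[\text{core}]$ in $H_1(M)$. Consequently some component of $\gamma_P$, lifted to the cylinder, must separate each lift of $z$ from the corresponding lift of $w$ with nonzero algebraic count. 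Such a component is therefore "at least as long" as $\mu$, which separates them exactly once. The picture we aim for is that this lifted component crosses every vertical strip that $\mu$ crosses.

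The key step, and the main obstacle, is to turn this separation property into an intersection inequality that is stable under homotopy in the punctured surface. I would argue as follows. Put $\gamma_C$ in pegboard (minimal) position. Every lifted component of $\gamma_C$ that meets $\mu$ essentially must pass between a lift of $z$ and a lift of $w$. Each such passage is forced to cross the separating lift of $\gamma_P$ at least once, because a curve separating two punctures cannot be avoided by anything connecting the two sides of that pair. Minimal position, with no immersed bigons by pegboard rigidity, then ensures these crossings are not cancelled in $\HF$. Local systems contribute multiplicatively to both counts, so they should not affect the comparison.

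Care is needed with components of $\gamma_C$ that are homologically trivial figure-eights or that wrap around the puncture. For these I would check the count directly using the Hanselman--Rasmussen--Watson classification of curve segments in the strip between consecutive punctures. Summing over components then gives the desired inequality.
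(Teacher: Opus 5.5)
Your reduction to a planar intersection inequality has the right overall shape. Your remark that the winding number appears as the algebraic count of $\gamma_P$ against an arc from $z$ to $w$ is the counterpart of the paper's Lemma~\ref{lem:periodic-grading} ($c_0=n\cdot w_P$). But there are two genuine gaps.

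\textbf{The framework.} Representing the pattern by an immersed multicurve in the twice-punctured torus $T\setminus\{z,w\}$ comes from a genus-one doubly pointed bordered diagram, i.e.\ from $(1,1)$-patterns in a solid torus. That is essentially the setting in which Shen proved the $S^3$ version. The theorem allows $M_2$ to be any manifold with torus boundary and $H_2(M_2)=0$, and $P$ arbitrary. There the basepoint $w$ lies in the interior of a higher-genus Heegaard surface, not on the boundary torus, and what you actually have is the non-extendable type D structure $\CFD(M_2,P)$. Its multicurve $\Gamma_P$ lives in the \emph{once}-punctured torus $T_\bullet$ and may contain noncompact arcs running into the puncture (via \cite{KWZ}). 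The comparison to prove is then $i(\Gamma_P,\Gamma_C)\ge i(\Gamma_\mu,\Gamma_C)$, where $\Gamma_\mu$ is the embedded arc in the class of $\mu_C$. In that setting there is no $w$ to separate from $z$.

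\textbf{The key step.} This is the more serious gap, and it does not go through as written. A nonzero algebraic count of $\gamma_P$ separating $z$ from $w$ does not force $\gamma_P$ to meet every strand of $\gamma_C$ crossing $\mu$: a small loop around $w$ has nonzero count and misses nearly everything. Intersection numbers are not monotone in any notion of ``length,'' and crossings you exhibit can disappear once both curves are put in minimal position in the punctured surface. Two ingredients are missing.

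First, you need a componentwise homological constraint. Along a compact component of $\Gamma_P$ the Alexander grading never changes, because the structure maps count domains avoiding $z$ and $w$. So that component's class $(a,b,0)$ must be a multiple of $\Pi_0=(a_0,b_0,c_0)$ with $c_0=nw\neq 0$. Hence every compact component is nullhomologous, and the noncompact arcs carry a nonzero multiple of $[\mu_C]$ (Propositions~\ref{prop:curve-homology-class} and~\ref{prop:noncompact}).

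Second, you need a mechanism controlling intersection with $\Gamma_C$ under homotopy through the puncture. Adapting \cite[Theorem~7.15]{hrw}, the paper deforms $\Gamma_P$ into $\Gamma_\mu$ by these moves, none of which increases intersection with the pulled-tight compact curve $\Gamma_C$:
\begin{itemize}
\item resolving self-intersections;
\item joining arcs at the puncture;
\item deleting (nullhomologous) components;
\item pulling tight;
\item pulling through the puncture at nonstraight corners.
\end{itemize}
The last move can locally create two intersections. One needs the HRW matching argument, which fails at straight corners, to find a compensating corner. Once all corners are straight, the nonzero homology class forces the arc onto $\Gamma_\mu$. Your proposal has no substitute for this step, which is the heart of the proof.
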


Theorem \ref{thm:gensat} is stated in much greater generality than is needed for our application, and should be of independent interest. It yields in particular the corollary below for satellite knots in $S^3$. Even this special case was previously unknown, though Shen proved it for $(1,1)$-patterns in \cite{shen-satellite}:

\begin{corollary}
\label{cor:simplesatellite}
Let $K\subset S^3$ be a  satellite knot with companion $C$, whose pattern $P\subset S^1\times D^2$ has winding number $w\geq 1$. Then \[\dim\HFK(S^3,C)\leq \dim\HFK(S^3,K).\]
\end{corollary}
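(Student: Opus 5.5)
Corollary \ref{cor:simplesatellite} is the special case of Theorem \ref{thm:gensat} obtained by taking $Y=Z=S^3$. So the plan is mainly to check that an ordinary satellite knot in $S^3$ fits the framework of generalized satellites from \S\ref{sec:generalized-satellite}, with the same companion and a pattern of the same winding number.

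\textbf{Step 1: set up the satellite data.} Let $K\subset S^3$ be a satellite knot with companion $C\subset S^3$ and pattern $P\subset S^1\times D^2$. Fix the identification of $S^1\times D^2$ with a tubular neighborhood $N(C)$ that sends a longitude $S^1\times\{\pt\}$ to the Seifert longitude of $C$. Then $K$ is the image of $P$, and
\[ S^3 = (S^1\times D^2)\cup_{\partial} (S^3\setminus \inr N(C)). \]

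\textbf{Step 2: match the generalized-satellite definition.} A generalized satellite presumably consists of a knot $P$ in a solid torus (or in some manifold with torus boundary) glued to the exterior of a companion $C\subset Z$. Here the ambient manifold is $Y=S^3$, the companion is $C\subset Z=S^3$, and the pattern is $P$.

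The winding number $w$ of $P$ is the algebraic intersection of $P$ with a meridian disk $\{\pt\}\times D^2$. This is exactly the winding number in the sense of Theorem \ref{thm:gensat}, presumably the class of $P$ in $H_1$ of the solid torus measured against its meridian. The hypothesis $w\geq 1$ therefore carries over unchanged.

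\textbf{Step 3: conclude.} With these identifications, Theorem \ref{thm:gensat} gives $\dim\HFK(S^3,C)\leq\dim\HFK(S^3,K)$, which is the claimed inequality.

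\textbf{Main obstacle.} The only real work is Step 2: making sure the gluing conventions in the definition of a generalized satellite (framings, and how the pattern's solid torus meets the companion exterior) agree with those of an ordinary satellite. If the definition requires the gluing to send the pattern's meridian to the meridian of $C$, the standard satellite gluing satisfies this by construction, so I expect the check to be immediate.
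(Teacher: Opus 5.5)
Your proposal is correct and matches the paper, which gives no separate argument and treats this corollary as the special case $Y=Z=S^3$ of Theorem~\ref{thm:gensat}. To replace the ``presumably'' in your Step~2 with the actual check against Definition~\ref{def:generalized-satellite}:
\begin{enumerate}
\item The satellite torus is incompressible and not boundary-parallel in the irreducible exterior of $K$, by the standard definition of a satellite knot.
\item The rational longitudes of $S^3\setminus\nu(C)$ and of the solid torus are $\lambda_C$ and $\mu_C$, which have distance one. Hence $Z=M_1(\mu_C)=S^3$ with core $C$, and the winding number in Definition~\ref{def:winding-number} (with $n=1$) is the usual one.
\end{enumerate}
This is exactly the $n=1$ case of Proposition~\ref{prop:branched-cover-satellite}.
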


We prove Theorem \ref{thm:gensat} via the immersed curves formulation of bordered Heegaard Floer homology due to the second author, Rasmussen, and Watson in \cite{hrw}. We will ultimately apply it to lifts of fibered satellite knots in their cyclic branched covers (see Corollary~\ref{cor:satcover}), as indicated in \S\ref{ssec:proof}.

Finally, the techniques that go into our proof of Theorem \ref{thm:main}, together with results of Kojima and McShane relating the entropy of a pseudo-Anosov homeomorphism and the volume of its mapping torus \cite{kojima-mcshane}, allow us to prove that the arc index and genus of a knot $K\subset S^3$ give rise to an upper bound on the Gromov norm $\lVert \cdot \rVert$ of the complement of any fibered predecessor of $K$. This generalizes a similar result in \cite[Theorem 1.4]{bs-ribbon} bounding volume for \emph{hyperbolic} fibered predecessors. Let \[v_3= 1.014941606...\] denote the volume of the regular ideal tetrahedron in $\mathbb{H}^3$. Then we have the following:

\begin{theorem}\label{thm:volume}
Suppose that $K\subset S^3$ is a knot of genus $g$ and arc index $\delta$. Then \[\lVert S^3\setminus J\rVert \leq \tfrac{3\pi}{v_3}(2g-1)\log(\delta!)\] for every fibered knot $J\leq K$.
\end{theorem}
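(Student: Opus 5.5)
We will deduce the bound from the JSJ/Nielsen--Thurston structure of the fibered knot $J$, together with an entropy bound on its monodromy coming from Floer homology. Write $S^3\setminus J$ as the mapping torus of a monodromy $h\colon \Sigma\to\Sigma$ with $g(\Sigma)=g(J)$. Since $J\le K$, genus is monotone under ribbon concordance, and $\dim\HFK$ is nondecreasing along ribbon concordances (Zemke). Hence $g(J)\le g$ and $\dim\HFK(J)\le\dim\HFK(K)$.

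The Gromov norm of $S^3\setminus J$ is additive over the JSJ pieces and counts only the hyperbolic ones, each contributing $\mathrm{vol}/v_3$. For a fibered manifold these hyperbolic pieces are exactly the mapping tori of the pseudo-Anosov components $h_i\colon\Sigma_i\to\Sigma_i$ in the Nielsen--Thurston decomposition of $h$. Periodic pieces are Seifert fibered and contribute zero. Kojima--McShane gives
\[\mathrm{vol}(M_{h_i})\le 3\pi\,|\chi(\Sigma_i)|\log\lambda(h_i),\]
and $\sum_i|\chi(\Sigma_i)|\le|\chi(\Sigma)|=2g(J)-1\le 2g-1$. It therefore suffices to show that
\[\log\lambda(h_i)\le\log(\delta!)\quad\text{for every pseudo-Anosov component } h_i.\]
Summing then gives the stated bound.

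The main step is this dilatation bound for each component. In the hyperbolic case, \cite{bs-ribbon} obtains it by comparing the growth of fixed points of iterates of $h$, read off from $\HFK$ of the lift of $J$ to its $n$-fold cyclic branched covers, with that for $K$. For $K$, these dimensions are bounded through a grid diagram of size $\delta$ by roughly $(\delta!)^n$. For a reducible monodromy the same comparison needs Floer homology that sees a single pseudo-Anosov piece. For this we plan to use Theorem~\ref{thm:gensat}, in the form of Corollary~\ref{cor:satcover} applied to the lifted knots.

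Concretely, each pseudo-Anosov piece $M_{h_i}$ is the complement of a companion $C_i$, a fibered knot in some manifold $Z$ with monodromy $h_i$ capped off, and $J$ is a generalized satellite of it with winding number $w\ge1$. Fiberedness forces positive winding number. Lifting to the $n$-fold cyclic branched cover, Theorem~\ref{thm:gensat} gives
\[\dim\HFK(\tilde Z_n,\tilde C_{i,n})\le\dim\HFK(\Sigma_n(J),\tilde J_n).\]
Ribbon concordance lifts to branched covers and again gives monotonicity, so the right side is at most $\dim\HFK(\Sigma_n(K),\tilde K_n)\le C(\delta!)^n$. The left side bounds below the fixed-point counts of $h_i^n$ in the next-to-top Alexander grading, up to a correction when $h_i$ is capped off, and these counts grow like $\lambda(h_i)^n$. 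Letting $n\to\infty$ yields $\lambda(h_i)\le\delta!$.

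The hardest part is this middle identification. One must check that capping off the boundary components of $\Sigma_i$ still lets fixed points of $h_i^n$ be detected in a top or next-to-top grading, and that the growth estimate survives the passage through the companion. We expect this to parallel the argument in the proof of Theorem~\ref{thm:main}, and we would rely on that argument.
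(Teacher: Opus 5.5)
\textbf{There is a genuine gap.} Your reduction is fine and matches the paper's bookkeeping: Kojima--McShane on each pseudo-Anosov piece, plus $\sum_i|\chi(\Sigma_i)|\le 2g(J)-1\le 2g-1$. This works provided $h_i$ means the first-return map $h^{m_i}|_{\Sigma_i}$ on one representative of each orbit.

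\textbf{The dilatation bound is not proved, and the sketched mechanism is wrong.} The step carrying all the content, $\lambda(h_i)\le\delta!$, is deferred to ``the proof of Theorem~\ref{thm:main}''. A pseudo-Anosov JSJ piece is generally not the exterior of a knot in some $Z$ with ``capped-off'' monodromy. Capping changes the mapping class, which need not remain pseudo-Anosov. Also, the filled manifold is not a piece of $\Sigma_n(J)$, so Theorem~\ref{thm:gensat} says nothing about it. The correct picture has two cases.
\begin{enumerate}
\item If the piece is the outermost one $\Sigma_0$, it is not a companion at all. But it is $h$-invariant, so no satellite inequality is needed: $\#\mathrm{Fix}(\psi_0^n)\le N(h^n)\le\dim\HFK(\Sigma_n(J),J_n)$ by Ni and Ghiggini--Spano.
\item Every other pseudo-Anosov piece is the outermost piece of the monodromy of some (iterated) companion $C\subset S^3$ of $J$. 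This $C$ is a genuine knot in $S^3$ whose exterior is that piece glued to the exteriors of deeper companions, with nothing filled. You transfer the Floer bound from $J_n$ to $C_n$ via Corollary~\ref{cor:satcover}, then run the outermost-piece argument for $C$.
\end{enumerate}
The paper packages this as an induction on genus (Theorem~\ref{thm:hfk-j-finite}). It uses only the immediate companions $C^j$ and $\lVert S^3\setminus J\rVert=\lVert\inr M_{\psi_0}\rVert+\sum_j\lVert S^3\setminus C^j\rVert$.

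\textbf{Your $n$ is unconstrained, and this is exactly where the naive argument breaks.} The claim ``ribbon concordance lifts to branched covers and again gives monotonicity'' is false as stated. The lifted annulus complement is always a ribbon cobordism. However, the Daemi--Lidman--Vela-Vick--Wong inequality requires a ribbon $\Z/2$-homology cobordism, and this can fail for every multiple of a fixed integer (see \S\ref{ssec:proof}). The missing ingredient is Theorem~\ref{thm:hfk-cover}, deduced from Theorem~\ref{thm:branched-cover-ribbon}: monotonicity holds for all $n$ not divisible by any prime in a finite set $S$ depending only on $K$.

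Corollary~\ref{cor:satcover} additionally needs $\Sigma_n(J)$ to be a rational homology sphere and $\gcd(n,w)=1$. Otherwise the preimage of the satellite torus has $\gcd(n,w)$ components, and $J_n$ is not a generalized satellite of $C_n$.

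So you must let $n$ run over primes larger than $\max S$ that do not divide the finitely many winding numbers involved. Then compute $\lambda(h_i)=\lim(\#\mathrm{Fix}(h_i^n))^{1/n}$ along that subsequence. Only with this choice does the chain
$\#\mathrm{Fix}(h_i^n)\le\dim\HFK(\Sigma_n(C),C_n)\le\dim\HFK(\Sigma_n(J),J_n)\le\dim\HFK(\Sigma_n(K),K_n)\le(\delta!)^n$
hold for infinitely many $n$.
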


\subsection{On the proof}\label{ssec:proof}

Let $K$ be a knot in $S^3$. As mentioned above, the main challenge in proving Theorem \ref{thm:main} is showing that there are finitely many fibered knots $J\leq K$ with reducible monodromy. Any such $J$ is a satellite knot whose pattern has winding number $w\geq 1$. Let us assume for example that $J$ is the $(1,6)$-cable of a hyperbolic (necessarily fibered) knot $C$, and explain why there are only finitely many such $J\leq K$, as even this very simple case illustrates some of the key ideas.

Let $h:\Sigma\to \Sigma$ be the monodromy of $J$.  The reducing set $\Gamma$ of $h$ cuts $\Sigma$ into seven pieces, \[\overline{\Sigma \setminus \Gamma}  = \Sigma_0 \cup S_0 \cup S_1 \cup \dots \cup S_5,\] where $\Sigma_0$ contains $\partial \Sigma$, and each $S_i$ is a copy of the fiber surface of $C$. Moreover $h$ sends each $S_i$ to $S_{i+1}$, where the subscripts are understood mod 6, and we may assume that the restriction \[h^6|_{S_0}:S_0\to S_0\] is freely isotopic to the pseudo-Anosov monodromy of $C$, which we denote by $\varphi$.

Our aim is  to prove that there are only finitely many possibilities for  $C$. This would follow as in the proof of \cite[Theorem~1.2]{bs-ribbon} from an a priori upper bound on the dilatation  of $\varphi$, the idea being that (1) $C$ is determined by its complement, which is determined by the conjugacy class of $\varphi$; (2) up to conjugation, there are only finitely many pseudo-Anosov maps of a given surface with dilatation less than some fixed constant; and (3) $g(C) < g(J)\leq g(K)$, the second inequality following for fibered $J$ from a theorem of Gilmer \cite{gilmer}.

Let $J_n\subset \Sigma_n(J)$ denote the lift of $J$ in its $n$-fold cyclic branched cover. Then $J_n$ is a fibered knot with monodromy $h^n$. The knot Floer homology of $J_n$ bounds the Nielsen number of its monodromy, \[N(h^n) \leq \dim \HFK(\Sigma_n(J),J_n),\] as shown by Ni \cite{ni-fixed} and Ghiggini--Spano \cite{ghiggini-spano}. It follows from this and the discussion above that when $n=6k$, the number of fixed points of $\varphi^k$ is bounded as follows, \[\#\mathrm{Fix}(\varphi^k) \leq N(h^{6k}) \leq \dim \HFK(\Sigma_{6k}(J),J_{6k}).\]
Together with the relationship between the dilatation of a pseudo-Anosov map and the fixed points of its iterates (cf.~\cite[Lemma 2.4]{bs-ribbon}), this implies that the dilatation $\lambda(\varphi)$ satisfies  \begin{equation}\label{eq:inequality-3k}\lambda(\varphi)  = \lim_{k\to \infty}\big( \#\mathrm{Fix}(\varphi^k)\big)^{\frac{1}{k}} \leq \liminf_{k\to \infty} \big(\dim \HFK(\Sigma_{6k}(J),J_{6k})\big )^{\frac{1}{k}}.\end{equation}  It would therefore suffice for our aim to bound the right hand side in terms of $K$, and this is the most natural approach following the work in \cite{bs-ribbon}.

The first and third authors proved in \cite[Lemma~2.1]{bs-ribbon} that
\begin{equation} \label{eqn:delta}
\dim \HFK(\Sigma_{n}(K),K_{n}) \leq (\delta!)^n,
\end{equation}
where $\delta$ is the arc index of $K$. Now, the ribbon concordance from $J$ to $K$ lifts to an annulus in the $n$-fold cyclic branched cover of $S^3\times I$ branched along the concordance. If the complement of this annulus is a ribbon $\mathbb{Z}/2$-homology cobordism from the complement of $J_n$ to the complement  of $K_{n}$ then the work of Daemi--Lidman--Vela-Vick--Wong \cite{dlvvw} combined with \eqref{eqn:delta} implies that
\begin{equation} \label{eq:inequality-intro}
\dim \HFK(\Sigma_{n}(J),J_{n})\leq \dim \HFK(\Sigma_{n}(K),K_{n}) \leq (\delta!)^n.
\end{equation}
We would like to apply this in the case $n=6k$, and conclude from \eqref{eq:inequality-3k} that \[\lambda(\varphi) \leq (\delta!)^6,\] which would complete the proof that there are only finitely many possibilities for $C$. The problem with this approach, alluded to in \cite[\S3]{bs-ribbon}, is that while the annulus complement above is always a ribbon cobordism, it need not be a $\Z/2$-homology cobordism or even a $\Q$-homology cobordism for \emph{any} $n=6k$, much less for a sequence of such $n$ tending to infinity.  

We use Theorem \ref{thm:gensat} to bypass this issue as follows. First, we show that the annulus complement above is a ribbon $\Z/2$-homology cobordism from the complement of $J_n$ to that of $K_n$ for an infinite increasing sequence of primes \[n=p_1, p_2, p_3,\dots,\] which  implies  the inequality \eqref{eq:inequality-intro} for all such $n$ (see Theorems \ref{thm:hfk-cover} and \ref{thm:branched-cover-ribbon}). Importantly, we may then assume that these $p_j$ are relatively prime to 6, in which case $J_n$ is a generalized satellite knot with companion $C_n\subset \Sigma_n(C)$, whose pattern has winding number 6. Theorem~\ref{thm:gensat} then implies that 
\begin{equation} \label{eq:inequality-intro2}
\dim \HFK(\Sigma_{n}(C),C_{n})\leq \dim \HFK(\Sigma_{n}(J),J_{n})
\end{equation}
for all $n=p_j$. Note that $C_n$ is fibered with pseudo-Anosov monodromy $\varphi^n$. Combining \eqref{eq:inequality-intro2} and \eqref{eq:inequality-intro} with the relationship between knot Floer homology and fixed points  then gives the inequality
\[ \big(\#\mathrm{Fix}(\varphi^n)\big)^{\frac{1}{n}}\leq \big(\dim \HFK(\Sigma_{n}(C),C_{n})\big)^{\frac{1}{n}}\leq \delta ! \]
for all $n=p_j$. Finally, taking the limit as $j$ goes to infinity yields the bound \[\lambda(\varphi) \leq \delta!,\] which proves the desired finiteness of fibered $J\leq K$ which are $(1,6)$-cables of hyperbolic knots. 

Now, general fibered satellite knots are much more complicated than such cables. For instance,  the restriction of the monodromy $h$ to the outermost component $\Sigma_0$ can also be pseudo-Anosov; the components of $\overline{\Sigma\setminus \Sigma_0}$ can   intersect the reducing system $\Gamma$ in their interiors; and these components can form more than one orbit under $h$. Establishing finiteness in the general case thus proceeds by an inductive argument and is attended by new subtleties, but the spirit of the proof is similar to what we have sketched above.

\subsection{A technical result and a question} 
The induction alluded to above really goes into proving our main technical theorem below, which we then use to prove Theorems \ref{thm:main} and \ref{thm:volume}:

\begin{theorem} \label{thm:hfk-j-finite}
Given natural numbers $M$ and $g$, there are only finitely many genus-$g$ fibered knots $K\subset S^3$ which satisfy \[\dim\HFK(\Sigma_{n}(K),K_{n})\leq M^{n}\] for infinitely many primes $n$. Moreover, \[\lVert S^3\setminus K \rVert \leq \tfrac{3\pi}{v_3}(2g-1)\log(M)\] for any such knot $K$.

\end{theorem}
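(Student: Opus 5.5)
The plan is to induct on the genus $g$, using the Nielsen--Thurston decomposition of the monodromy $h:\Sigma\to\Sigma$ of $K$. The goal is to show that every pseudo-Anosov piece of (a power of) $h$ has dilatation at most $M$. There are finitely many possibilities for the combinatorics of a reducing system on a genus-$g$ surface with one boundary component. There are also finitely many conjugacy classes of pseudo-Anosov maps of bounded dilatation on each piece. Periodic pieces have finitely many conjugacy classes as well, and Dehn twists along reducing curves are controlled by the fact that $h$ is a monodromy of a knot in $S^3$. Granting the dilatation bound, these facts leave finitely many possibilities for the conjugacy class of $h$ up to the finitely many twisting parameters, hence finitely many knot complements and so finitely many knots by Gordon--Luecke. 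The Gromov norm bound will come from the same dilatation estimate. Each hyperbolic JSJ piece of $S^3\setminus K$ is (up to finite covers, which scale $\lVert\cdot\rVert$ and entropy compatibly) the mapping torus of a pseudo-Anosov piece $\varphi_i$ on a subsurface $S_i$ with $|\chi(S_i)|$ summing to at most $2g-1$. Kojima--McShane gives $\mathrm{vol} \le 3\pi|\chi(S_i)|\log\lambda(\varphi_i)$, Seifert pieces contribute zero, and Gromov norm is additive over JSJ pieces with $\lVert\cdot\rVert=\mathrm{vol}/v_3$ on hyperbolic pieces. Summing gives $\tfrac{3\pi}{v_3}(2g-1)\log M$.

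The three cases of the monodromy are handled as follows.

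\textbf{Periodic.} $K$ is a torus knot of genus $g$, and there are finitely many of these.

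\textbf{Pseudo-Anosov.} For primes $n$ in the given infinite set, Ni and Ghiggini--Spano give $\#\mathrm{Fix}(h^n)\le N(h^n)\le \dim\HFK(\Sigma_n(K),K_n)\le M^n$. Taking $n\to\infty$ along these primes and applying \cite[Lemma 2.4]{bs-ribbon} gives $\lambda(h)\le M$.

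\textbf{Reducible.} $K$ is a satellite. The outermost piece $\Sigma_0$ (containing $\partial\Sigma$) corresponds to the pattern, possibly a generalized one, and the remaining components form orbits $S^{(j)}_0,\dots,S^{(j)}_{w_j-1}$ permuted cyclically by $h$. Each orbit corresponds to a fibered companion $C_j$ with monodromy freely isotopic to $h^{w_j}|_{S^{(j)}_0}$ and genus $g(C_j)<g$. Discard the finitely many primes dividing some $w_j$. For the remaining primes $n$, the lift $K_n$ is a generalized satellite with companion $(C_j)_n\subset\Sigma_n(C_j)$ and winding number $w_j\ge1$. Theorem~\ref{thm:gensat} then gives
\[
\dim\HFK(\Sigma_n(C_j),(C_j)_n)\le\dim\HFK(\Sigma_n(K),K_n)\le M^n
\]
for infinitely many primes $n$. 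By induction on genus, each $C_j$ lies in a finite list, and its pieces have dilatation at most $M$. For the pattern piece $\Sigma_0$, if $h|_{\Sigma_0}$ is pseudo-Anosov, apply the Nielsen fixed-point bound directly to $h^n$. Fixed points in $\Sigma_0$ of essential classes are counted in $N(h^n)$, so $\#\mathrm{Fix}(h^n|_{\Sigma_0})$ is bounded by $M^n$ up to a correction polynomial in $n$ coming from boundary-parallel classes. This again gives $\lambda\le M$. The pattern, viewed as a fibered knot in a solid torus or in a more general exterior, then has finitely many possibilities, because its genus is at most $g$ and its monodromy lies among finitely many conjugacy classes modulo twisting along $\partial$. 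That twisting is pinned down by the requirement that the satellite be a knot in $S^3$ of genus $g$, and by the fixed-point count.

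I expect the main obstacle to be the reducible case. Specifically, the hard part is setting up the induction so that the companions $C_j$ are again fibered knots in $S^3$ satisfying the same hypothesis. Their branched-cover lifts must be genuine generalized satellites of $K_n$, which requires $\gcd(n,w_j)=1$ and control of nested satellites. A further difficulty is arguing that only finitely many gluings and twist parameters occur. I would first try to make the induction statement about pieces of the JSJ decomposition of the fibered exterior rather than about knots, so that generalized satellites in $\Sigma_n$ appear naturally.
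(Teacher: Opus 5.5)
Your outline agrees with the paper on most points. Both argue by induction on $g$ and split into torus, hyperbolic and satellite cases. Both bound the dilatation of the pseudo-Anosov outer piece by $M$ from $\#\mathrm{Fix}\le N\le \dim\HFK\le M^{n}$ along primes. Both apply Theorem~\ref{thm:gensat} to $K_n$, for primes coprime to the winding numbers $m_j$, to pass the hypothesis down to the companions. Both bound the Gromov norm by summing Kojima--McShane against $2g-1=|\chi(\Sigma_0)|+\sum_j m_j(2g(C^j)-1)$. (You should also discard the finitely many primes, e.g.\ those in $S_{K,2}$, for which $\Sigma_n(K)$ is not a rational homology sphere; Proposition~\ref{prop:branched-cover-satellite} needs this.)

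The genuine gap is the step you flag yourself: passing from finitely many companions and finitely many conjugacy classes of pieces to finitely many knots $K$. The reasons you give for bounding the twisting do not work. A twist along $\partial\Sigma$ or along a reducing orbit does not change the fiber, so genus says nothing. It also does not change the free isotopy class of any Nielsen--Thurston piece, so dilatations are unchanged. The Nielsen-number argument extracts only exponential growth rates from a hypothesis that holds along a tail of primes, so the fixed-point count cannot pin down the twisting either. ``Being a knot in $S^3$'' is the right constraint, but it has to be turned into concrete statements.

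The paper does this in two steps.

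First, Proposition~\ref{prop:sat-meridian} shows that the meridian of $C^i$ on $T^i$ is the unique primitive class with $[\mu_{C^i}]=m_i[\mu_K]$ in $H_1(M_{\varphi_0};\Z)$. Uniqueness follows from half-lives-half-dies in the solid torus $V^i$. The relation itself requires showing that a meridian disk of $V^i$ meets each other satellite torus $T^j$ in a nullhomologous multicurve. That in turn uses \cite[Proposition 4.7]{bs-apoly} to see that the relevant incompressible planar surfaces in the companion exteriors are separating. Consequently, once you choose a representative $h_0$ of $\psi_0$ fixing $\partial\Sigma$ (which fixes $\mu_K$), the gluings of all companion exteriors to $M_{h_0}$ are forced. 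In effect this fixes the twist parameters along the orbits $c^i$.

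Second, the only remaining freedom is $h_0\mapsto t_\partial^N h_0$, which shifts the fractional Dehn twist coefficient by $N$. Since $|c|\le 1$ for fibered knots in $S^3$ \cite{hedden-mark-fdtc}, at most three values of $N$ occur, and Gordon--Luecke then gives finiteness (Theorem~\ref{thm:determine-C}).

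Without these inputs, or substitutes for them, your claim that only finitely many twisting parameters occur is unsupported. When $\varphi_0$ is periodic, the paper avoids the issue altogether. It uses the classification of such $K$ as connected sums or cables \cite[Proposition 2.3]{bs-traces}, with the cabling parameters bounded by the topology of $\Sigma_0$.
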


It is natural to ask whether this theorem also holds for nonfibered knots:

\begin{question}
Does Theorem \ref{thm:hfk-j-finite} hold without the \emph{fibered} hypothesis?
\end{question}

Floer homologists may find this question interesting for its own sake. But its primary importance for us is that if the finiteness result in Theorem \ref{thm:hfk-j-finite} holds without the fibered hypothesis, then one can prove Conjecture \ref{conj:pred} in full generality, and hence Gordon's original conjecture about descending chains of ribbon concordant knots, following our proof of Theorem \ref{thm:main} from Theorem \ref{thm:hfk-j-finite}.
 
\subsection{Concluding remarks} First, just as noted in \cite{bs-ribbon}, Theorem \ref{thm:main} holds by the same arguments if ribbon concordance is replaced by \emph{strong homotopy-ribbon concordance}, since the results we use about ribbon concordance from \cite{dlvvw} hold in this more general setting. Second, since it is possible to enumerate the conjugacy classes of pseudo-Anosov homeomorphisms of a given surface with dilatation less than a fixed constant, our proof of Theorem \ref{thm:main} should in principle lead to an algorithm which for any knot $K\subset S^3$ produces an explicit finite set of knots $\mathcal{S}$ such that every fibered knot $J\leq K$ is contained in $\mathcal{S}$. We will leave this for others to work out in detail.

\subsection{Conventions} All Floer homology groups in this paper are with coefficients in $\F:=\Z/2$.

\subsection{Organization}
In \S\ref{sec:cyclic-covers}, we study the question of which cyclic covers of ribbon concordance complements are  $\Z/2$-homology cobordisms, and use this to establish inequalities between the knot Floer homologies of the lifts of ribbon concordant knots in $S^3$ to  their cyclic branched covers. In \S\ref{sec:generalized-satellite}, we provide background and basic results about generalized satellite knots. In \S\ref{sec:reducing-curves}, we prove several results about the reducible monodromies of fibered satellite knots in $S^3$. In \S\ref{sec:satellite-inequality}, we use the immersed curves formulation of bordered Heegaard Floer homology to prove the inequality in Theorem \ref{thm:gensat}. We finally put everything together in \S\ref{sec:main-proof} to prove Theorem \ref{thm:hfk-j-finite} and then Theorems \ref{thm:main} and \ref{thm:volume}.

\subsection{Acknowledgements}
JAB was supported by NSF Grant DMS-2506250 and a Simons Fellowship. SS was supported by the Engineering and Physical Sciences Research Council [grant number UKRI1016].  No data were created or analyzed in this work. We thank Qiuyu Ren and Liam Watson for interesting and helpful conversations.

\section{Branched cyclic covers and knot Floer homology} \label{sec:cyclic-covers}
\newcommand\resultant{\operatorname{Res}}

Given a knot $K\subset S^3$ and a natural number $n$, we denote by $K_n$ the lift of $K$ to its $n$-fold cyclic branched cover $\Sigma_n(K)$, both here and throughout this paper. Supposing that $J\leq K$, an important ingredient in our proof of Theorem \ref{thm:main}, as discussed in \S\ref{ssec:proof}, is the following inequality relating the knot Floer homologies of $J_n$ and $K_n$ for certain $n$:

\begin{theorem}
\label{thm:hfk-cover}
For each knot $K\subset S^3$ there is a finite set $S$ of prime numbers such that 
\[\dim \HFK(\Sigma_n(J),J_n) \leq \dim\HFK(\Sigma_n(K), K_n)\] for any knot $J\leq K$ and every natural number $n$ which is not a multiple of any element  of $S$.
\end{theorem}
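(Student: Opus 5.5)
The plan is to lift the ribbon concordance $A\subset S^3\times I$ from $J$ to $K$ to the $n$-fold cyclic branched cover $W_n$ of $S^3\times I$ branched along $A$. We then show that the complement of the lifted annulus $\tilde A$ is a ribbon $\F$-homology cobordism from the exterior of $J_n$ to the exterior of $K_n$, provided $n$ avoids a finite set of primes depending only on $K$. Once this is done, the result of Daemi--Lidman--Vela-Vick--Wong \cite{dlvvw} applies. It says that a ribbon $\Z/2$-homology concordance induces a split injection on knot Floer homology, and this gives the inequality. The ribbon property comes for free: $A$ has only index $0$ and $1$ critical points with respect to the projection to $I$ (viewed from the $J$ end), and this handle structure lifts. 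So $W_n\setminus \nu\tilde A$, read from the $K_n$ side, is built from $2$- and $3$-handles only, exactly as in the setting of \cite{dlvvw}.

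The real content is the homology computation, which I would organize around Alexander polynomials. Let $X=S^3\times I\setminus\nu A$, and let $X_n$ be its $n$-fold cyclic cover induced by $H_1(X)\cong\Z$ (Alexander duality). The branched cover complement is exactly $X_n$. The inclusions of $E(J)$ and $E(K)$ into $X$ induce $\Z$-isomorphisms on $H_1$, and $H_*(X,E(K))=0$ integrally. By Milnor's exact sequence / the Fox formula, for a $\Z$-homology cobordism the $n$-fold cyclic covers are $\F$-homology cobordisms exactly when the relative twisted homology $H_*(X,E(K);\F[t^{\pm1}]/(t^n-1))$ vanishes. Since $\F[t^{\pm1}]/(t^n-1)$ is filtered with subquotients determined by the roots of unity of order dividing $n$ over $\overline{\F}$, it suffices to show the following. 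For each $n$-th root of unity $\zeta\ne1$ in $\overline{\F}$, the relative homology with coefficients in $\overline\F$ twisted by $\zeta$ vanishes.

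To check this, I would use the ribbon structure again. The module $H_1(X,E(K);\F[t^{\pm1}])$ is a torsion module, and its order divides $\Delta_K$. This is where ribbonness matters: Gilmer / Friedl--Powell show $\Delta_J\cdot f(t)f(t^{-1})=\Delta_K$, and more precisely the relative Alexander modules are controlled by $\Delta_K$. Thus the twisted relative homology at $\zeta$ can be nonzero only if $\zeta$ is a root of $\Delta_K$ mod $2$. Here one needs the reduction mod $2$ of $\Delta_K$, which is nonzero since $\Delta_K(1)=\pm1$. Each root $\zeta$ of $\Delta_K\bmod 2$ lies in some finite field and has a finite multiplicative order $m_\zeta$, which is odd. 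Let $S$ be the finite set of primes dividing some $m_\zeta$. If $n$ is not a multiple of any element of $S$, then $\gcd(n,m_\zeta)=1$, so $\zeta^n\neq1$ unless $\zeta=1$, and $\zeta=1$ is not a root. Hence no root of $\Delta_K\bmod2$ is an $n$-th root of unity, and the twisted relative homology vanishes in all degrees. Degrees $2,3$ follow from the handle description and Euler characteristic/duality. The set $S$ depends only on $K$, as required.

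The main obstacle is the step bounding the relative twisted homology of $(X,E(K))$ in every degree by $\Delta_K$, uniformly over all ribbon $J$. I would handle it with the handle decomposition. Relative to $E(K)$, $X$ has $k$ $2$-handles and $k$ $3$-handles, so the relative chain complex over $\Lambda=\F[t^{\pm1}]$ is $\Lambda^k\xrightarrow{\partial}\Lambda^k$. Its homology is concentrated where $\det\partial$ fails to be a unit, and $\det\partial$ divides $\Delta_K$ up to units: compare it with $\Delta_K/\Delta_J$ via torsion multiplicativity. So $\partial\otimes\overline\F_\zeta$ is invertible whenever $\Delta_K(\zeta)\neq0$. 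This kills both relative groups at once, and the inequality then follows from \cite{dlvvw}.
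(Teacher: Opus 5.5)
Your proposal is correct. Its skeleton matches the paper's: lift the concordance, show the lifted annulus complement is a ribbon $\Z/2$-homology cobordism whenever $n$ avoids a finite set of primes read off from $\Delta_K \bmod 2$, then apply \cite{dlvvw}. Where you differ is in how the homology cobordism property is checked.

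The paper works upstairs with untwisted coefficients. Fox's formula and a resultant computation (Lemma~\ref{lem:resultant-nth-root}, Proposition~\ref{prop:resultants}) show $\Sigma_n(K)$ is a $\Z/2$-homology sphere. Gilmer's divisibility $\Delta_J\mid\Delta_K$ \cite{gilmer} transfers this to $\Sigma_n(J)$. The lifted handle decomposition, $\chi=0$, and the surjectivity on $H_1$ from the $K_n$ end then pin down the $\F$-homology of $W_n\setminus\nu(C_n)$ by dimension counting.

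You instead stay downstairs. By Shapiro's lemma (this, not the Milnor sequence, is the identification you want), the relative homology of the cover is $H_*(X,E(K);\F[t]/(t^n-1))$. You kill it one root of unity at a time using the square complex $\Lambda^k\xrightarrow{\partial}\Lambda^k$ and the bound $\det\partial\mid\Delta_K \bmod 2$. In effect you reprove Gilmer's theorem in the refined form $\det\partial\mid\Delta_K/\Delta_J$ rather than citing it. Concretely, $H_2(X;\Lambda)=0$: $H_2(E(J);\Lambda)=0$, and the relative complex from the $J$ end is invertible at $t=1$, so it has nonzero determinant and $H_2(X,E(J);\Lambda)=0$. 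Hence $\operatorname{coker}\partial$ embeds in $H_1(E(K);\Lambda)$.

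Your route handles degrees $2$ and $3$ at once and never passes through the branched covers being homology spheres. It also gives a slightly smaller $S$: primes dividing the actual orders of the roots, rather than every prime factor of $\prod_j(2^{d_j}-1)$. The paper's route is more elementary, uses only untwisted homology, and records along the way that $\Sigma_n(J)$ and $\Sigma_n(K)$ are $\Z/2$-homology spheres.

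Three points need more than a gesture:
\begin{enumerate}
\item The composition-series argument for $\overline{\F}[t]/(t^n-1)$ when $n$ is even, which is not semisimple in characteristic $2$.
\item Why acyclicity relative to $E(K)$ gives acyclicity relative to $E(J)$. Use Poincar\'e--Lefschetz duality, with the side boundary lifting to $T^2\times I$, or rerun your argument with the dual matrix, whose determinant is the conjugate of $\det\partial$.
\item The slip that the relevant torsion module is $H_2(X,E(K);\Lambda)$; $H_1(X,E(K);\Lambda)$ vanishes.
\end{enumerate}
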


The first and third authors proved this inequality for $n$ which are powers of $2$ in  \cite[Corollary 2.3]{bs-ribbon}, but we need the stronger result above to prove Theorem \ref{thm:main} for non-hyperbolic $J$.  In particular, if $J$ is a satellite with winding number $w$ then we need the above inequality for an infinite sequence of $n$ which are all relatively prime to $w$, as the outline in \S\ref{ssec:proof} suggests.  Theorem \ref{thm:hfk-cover} follows from the behavior of Floer homology under ribbon $\Z/2$-homology cobordisms  \cite{dlvvw}, together with the $p=2$ case of the result below, which is a strengthening of \cite[Lemma 2.2]{bs-ribbon}:

\begin{theorem} \label{thm:branched-cover-ribbon}
Let $C \subset S^3 \times I$ be a ribbon concordance from $J$ to $K$. Let \[W_n:\Sigma_n(J)\to\Sigma_n(K)\] be the $n$-fold branched cyclic cover of $S^3\times I$ branched along $C$, and let $C_n$ be the lift of $C$ to $W_n$.  Given a prime $p$, there is a finite set $S$ of primes depending only on $K$ and $p$ such that $W_n \setminus \nu(C_n)$ is a ribbon $\Z/p$-homology cobordism whenever $n$ is not a multiple of any element of  $S$.
\end{theorem}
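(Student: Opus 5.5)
The plan is to compute the $\Z/p$-homology of the cyclic covers of the concordance complement via the Alexander module, and to show that everything is controlled by the Alexander polynomial of $K$ reduced mod $p$. Write $X = (S^3\times I)\setminus \nu(C)$, with ends $E_J$ and $E_K$ (the knot exteriors), and let $X_n = W_n\setminus\nu(C_n)$ be its $n$-fold cyclic cover, induced by $H_1(X)\cong\Z$. The argument has three steps: lift the ribbon structure, reduce the vanishing of relative homology to a determinant condition, and bound that determinant by $\Delta_K$.

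First, the ribbon property lifts. Since $C$ is ribbon, $X$ is obtained from $E_J\times I$ by attaching $k$ $1$-handles and $k$ $2$-handles; the numbers agree because $X$ is a $\Z$-homology cobordism. Each handle lifts to $n$ handles of the same index in the cover, so $X_n$ is built from the $n$-fold cyclic cover $E_J^n$ of $E_J$ using only $1$- and $2$-handles. Hence it only remains to show that $H_*(X_n,E_J^n;\Z/p)=0$ for the appropriate $n$.

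Next, I will reduce this vanishing to a determinant condition. Pass to the infinite cyclic cover $\widetilde X$ and set $\Lambda_p=\F_p[t^{\pm1}]$ with $\F_p=\Z/p$. The relative cellular chain complex $C_*(\widetilde X,\widetilde E_J;\F_p)$ is free over $\Lambda_p$, concentrated in degrees $1$ and $2$, with a square boundary matrix $A(t)$ of size $k\times k$. Put $\Delta(t)=\det A(t)\in\Lambda_p$. Since $H_*(X,E_J;\Z)=0$, we have $\Delta(1)=\pm1\neq 0$, so $\Delta\neq 0$. Consequently $H_2(\widetilde X,\widetilde E_J;\F_p)=0$ and $H_1(\widetilde X,\widetilde E_J;\F_p)$ is a torsion $\Lambda_p$-module of order $\Delta$. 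The chain complex of $(X_n,E_J^n)$ is the same complex tensored with $\F_p[t]/(t^n-1)$. It is therefore acyclic if and only if $\Delta$ is a unit in $\F_p[t]/(t^n-1)$, that is, if and only if $\Delta(\zeta)\neq0$ for every root $\zeta\in\overline{\F}_p$ of $t^n-1$. This criterion holds even when $p\mid n$, since it only concerns coprimality of $\Delta$ and $t^n-1$. Each such $\zeta$ is a root of unity whose order $m$ divides $n$.

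The main obstacle is that $\Delta$ a priori depends on the concordance $C$, while $S$ must depend only on $K$ and $p$. I will handle this using the other end of $X$.

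1. By Gordon, $\pi_1(E_K)\to\pi_1(X)$ is surjective. Restricting to the kernels of the maps to $\Z$ shows that $\pi_1(\widetilde E_K)\to\pi_1(\widetilde X)$ is surjective.
2. Abelianizing and tensoring with $\F_p$, the map $H_1(\widetilde E_K;\F_p)\to H_1(\widetilde X;\F_p)$ is surjective.
3. The long exact sequence of the pair shows that $H_1(\widetilde X;\F_p)\to H_1(\widetilde X,\widetilde E_J;\F_p)$ is surjective. Hence $H_1(\widetilde X,\widetilde E_J;\F_p)$ is a quotient of $H_1(\widetilde E_K;\F_p)$.
4. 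The module $H_1(\widetilde E_K;\F_p)$ is torsion of order $\Delta_K$ mod $p$, which is nonzero because $\Delta_K(1)=1$.
5. Since $\Lambda_p$ is a PID, the order of a quotient module divides the order of the module. Therefore $\Delta$ divides $\Delta_K \bmod p$ up to units.

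Conclusion: let $m_1,\dots,m_r$ be the multiplicative orders of the roots of $\Delta_K \bmod p$ in $\overline{\F}_p^{\,*}$. These are finitely many, and each $m_i>1$ since $\Delta_K(1)\neq0$. Let $S$ be the set of primes dividing some $m_i$; it depends only on $K$ and $p$. If $n$ is not a multiple of any element of $S$, then no $m_i$ divides $n$. Hence no root of $\Delta$ is an $n$-th root of unity, and by the second step $X_n$ is a ribbon $\Z/p$-homology cobordism.
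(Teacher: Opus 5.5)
Your argument is correct, but it takes a genuinely different route from the paper. The paper never looks at the equivariant chain complex. It proceeds in three steps:
\begin{enumerate}
\item The resultant criterion (Lemma~\ref{lem:resultant-nth-root} and Proposition~\ref{prop:resultants}) shows that $\Sigma_n(K)$ is a $\Z/p$-homology sphere when $n$ avoids $S_{K,p}$.
\item Gilmer's divisibility $\Delta_J \mid \Delta_K$ transfers this to $\Sigma_n(J)$ (Corollary~\ref{cor:ribbon-outgoing-homology-sphere}), so both exteriors have the $\F_p$-homology of a circle.
\item A numerical argument then forces $W_n\setminus\nu(C_n)$ to be an $\F_p$-homology cobordism. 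The inputs are: equal numbers of $1$- and $2$-handles from the $J$ end, $2$- and $3$-handles from the $K$ end, $\chi=0$, and the fact that the two meridians are homologous.
\end{enumerate}

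You instead identify the exact obstruction, $\gcd(\det A, t^n-1)$ in $\F_p[t]$. You then use the surjection from the $K$ end to show that $\det A$ divides $\Delta_K \bmod p$. In effect you prove a mod $p$ version of part of Gilmer's theorem (that the ribbon factor divides $\Delta_K$) rather than citing $\Delta_J\mid\Delta_K$.

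What your approach buys is a sharper, concordance-dependent criterion that never requires the branched covers to be $\Z/p$-homology spheres. For example, the trivial concordance has $k=0$ and gives a homology cobordism for every $n$, which the paper's argument cannot see. What the paper's approach buys is more elementary homological bookkeeping, reusing the homology-sphere criterion of Proposition~\ref{prop:resultants} that it needs again in \S\ref{sec:main-proof}.

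One step you should supply: a $\Z/p$-homology cobordism also requires the inclusion of the $E_K^n$ end to be an isomorphism, and you only prove $H_*(X_n,E_J^n;\F_p)=0$. The other end follows in either of two ways:
\begin{itemize}
\item By Poincar\'e--Lefschetz duality with field coefficients. The side $\partial\nu(C_n)$ of $\partial X_n$ is a collar on $\partial E_K^n$, so $H_i(X_n,E_K^n;\F_p)\cong H^{4-i}(X_n,E_J^n;\F_p)=0$.
\item By turning the handle decomposition upside down. The relative boundary matrix then becomes $A(t^{-1})^{T}$, and the roots of $\det A(t^{-1})$ are $n$-th roots of unity if and only if those of $\det A(t)$ are.
\end{itemize}
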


The construction of $S$ (denoted by $S_{K,p}$ in Proposition~\ref{prop:resultants} below) is quite explicit: it is determined  by $p$ and the mod $p$ reduction of the Alexander polynomial $\Delta_K(t)$, and the product of its elements is at most $p^{2\deg \Delta_K(t)}$.  We omit these details from Theorem~\ref{thm:branched-cover-ribbon} because it will suffice for our purposes just to know that some $S$ exists, but these features may be useful to others in the future.

To prove Theorem~\ref{thm:branched-cover-ribbon}, we first determine when the boundary components of the branched cover $W_n$ are $\Z/p$-homology spheres.  Fox showed that for any knot $K \subset S^3$ and integer $n \geq 1$ we have
\[ \left| H_1(\Sigma_n(K); \Z) \right| = \left| \prod_{j=0}^{n-1} \Delta_K(e^{2\pi i j / n}) \right|, \]
where the left side should be interpreted as zero if $H_1(\Sigma_n(K))$ is infinite; see, for example, \cite[Theorem~8.21]{burde-zieschang}.  If we write
\[ \tilde\Delta_K(t) = t^{\deg \Delta_K} \Delta_K(t) \in \Z[t], \]
then the above is equivalent to
\[ \left| H_1(\Sigma_n(K); \Z) \right| = \left| \prod_{\zeta^n = 1} \tilde\Delta_K(\zeta) \right| = \left| \resultant_\Z(t^n-1, \tilde\Delta_K(t)) \right|, \]
where we write $\resultant_R(f,g) \in R$ for the resultant of two polynomials $f,g \in R[t]$.  We will use this formula to prove the following, in which $\F_p$ and $\overline{\F}_p$ are respectively the finite field of order $p$ and its algebraic closure.

\begin{lemma} \label{lem:resultant-nth-root}
Given a knot $K \subset S^3$ and a prime $p$, the  branched cyclic cover $\Sigma_n(K)$ is a $\Z/p$-homology sphere if and only if no root of $\tilde\Delta_K(t)$ in $\overline{\F}_p$ is also a root of $t^n-1$.
\end{lemma}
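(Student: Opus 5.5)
We plan to reduce the statement to the standard fact about resultants over a field, using Fox's formula as recast above. First note that $\Sigma_n(K)$ is a $\Z/p$-homology sphere if and only if $H_1(\Sigma_n(K);\Z)$ is finite of order prime to $p$. Indeed, $\Sigma_n(K)$ is a closed oriented $3$-manifold, so by the universal coefficient theorem and Poincaré duality, $H_*(\Sigma_n(K);\Z/p)$ agrees with that of $S^3$ exactly when $H_1(\Sigma_n(K);\Z)\otimes\Z/p=0$ and $\mathrm{Tor}(H_1,\Z/p)=0$. For a finitely generated abelian group, this holds precisely when the group is finite of order coprime to $p$. By Fox's formula in the form
\[ \left| H_1(\Sigma_n(K);\Z)\right| = \left|\resultant_\Z(t^n-1,\tilde\Delta_K(t))\right|, \]
where the left side is read as $0$ when $H_1$ is infinite, this is equivalent to $p \nmid \resultant_\Z(t^n-1,\tilde\Delta_K(t))$. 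Note that the value $0$ is divisible by $p$, so the infinite case is handled automatically.

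Next we pass to $\F_p$. The resultant is a universal polynomial (the Sylvester determinant) in the coefficients of the two input polynomials, so reduction mod $p$ commutes with it, \emph{provided the degrees are preserved}. The polynomial $t^n-1$ is monic. The leading coefficient of $\tilde\Delta_K$ equals its constant term up to sign, by the symmetry $\Delta_K(t)=\Delta_K(t^{-1})$, and this coefficient may well be divisible by $p$. This degree issue is the main point requiring care.

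To handle it, we use that the resultant with a monic polynomial $f$ of degree $n$ can be written as
\[ \resultant(f,g) = \pm\prod_{f(\zeta)=0} g(\zeta). \]
This is the form already used in the displayed formula, and it is insensitive to the degree of $g$: the Sylvester matrix computation with $g$ viewed as having formal degree $d=2\deg\Delta_K$ and a possibly vanishing leading coefficient still gives $\pm\prod g(\zeta)$, since $f$ is monic. Hence
\[ \resultant_\Z(t^n-1,\tilde\Delta_K)\bmod p \;=\; \pm\prod_{\zeta^n=1,\ \zeta\in\overline{\F}_p}\bar{\tilde\Delta}_K(\zeta), \]
where the roots $\zeta$ of $t^n-1$ in $\overline{\F}_p$ are taken with multiplicity. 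Alternatively, we could note that the constant term of $\tilde\Delta_K$ is the leading coefficient $a$, so $0$ is never a root when $p\nmid a$, and treat $p\mid a$ via this monic-$f$ formula.

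This product vanishes in $\overline{\F}_p$ if and only if some $n$th root of unity $\zeta\in\overline{\F}_p$ satisfies $\bar{\tilde\Delta}_K(\zeta)=0$, that is, if and only if $\tilde\Delta_K$ and $t^n-1$ share a root in $\overline{\F}_p$. Here $\bar{\tilde\Delta}_K\neq 0$ because $\Delta_K(1)=\pm1$, so "root" is meaningful. Combining this with the first paragraph gives the lemma. The only step needing genuine attention is the justification that reduction mod $p$ commutes with the resultant even when $\deg\tilde\Delta_K$ drops mod $p$; the monicity of $t^n-1$ resolves it.
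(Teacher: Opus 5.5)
Your proof is correct and follows the paper's argument: you rewrite Fox's formula as $\resultant_\Z(t^n-1,\tilde\Delta_K(t))$, reduce mod $p$, and use that the resultant over $\overline{\F}_p$ vanishes exactly when the two polynomials share a root. Your extra step, using the product formula and the monicity of $t^n-1$ to show that reduction mod $p$ still commutes with the resultant when the leading coefficient of $\tilde\Delta_K$ vanishes mod $p$, makes explicit a point the paper's proof passes over silently.
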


\begin{proof}
We know that $\Sigma_n(K)$ is a $\Z/p$-homology sphere if and only if the order of $H_1(\Sigma_n(K); \Z)$ is not a multiple of $p$.  Since
\begin{align*}
\resultant_{\overline{\F}_p}(t^n-1,\tilde\Delta_K(t)) &= \resultant_{\F_p}(t^n-1,\tilde\Delta_K(t)) \\
&\equiv \resultant_\Z(t^n-1,\tilde\Delta_K(t)) \pmod{p}, 
\end{align*}
this is equivalent to showing that
\[ \resultant_{\overline{\F}_p}(t^n-1,\tilde\Delta_K(t)) \neq 0. \]
Now, the resultant of two polynomials over a field vanishes if and only if those polynomials have a common factor of positive degree, so \[\resultant_{\overline\F_p}(t^n-1,\tilde\Delta_K(t)) = 0\] if and only if some root of $\tilde\Delta_K(t)$ is a root of $t^n-1$ in $\overline{\F}_p$.
\end{proof}

\begin{corollary} \label{cor:ribbon-outgoing-homology-sphere}
Suppose that $J\leq K$ are knots in $S^3$ and  $p$ is prime. If $\Sigma_n(K)$ is a $\Z/p$-homology sphere then so is $\Sigma_n(J)$.
\end{corollary}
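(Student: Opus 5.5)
The plan is to reduce the statement to divisibility of Alexander polynomials and then apply Lemma~\ref{lem:resultant-nth-root}. The key classical input is the Gilmer/Gordon-type result for ribbon concordance: if $J\leq K$, then $\Delta_J(t)$ divides $\Delta_K(t)$ in $\Z[t^{\pm1}]$. In fact one has $\Delta_K = \Delta_J\cdot f(t)f(t^{-1})$ for some $f$, coming from the fact that $\pi_1$ of the knot complement of $J$ injects into that of the concordance complement and the Alexander module of $J$ is a direct summand of that of $K$ (Gilmer's theorem, also recoverable from \cite{dlvvw} or from Gordon \cite{gordon-ribbon}). I will invoke this as a known fact.

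\textbf{Step 1.} Normalize the divisibility as an identity of integer polynomials
\[\tilde\Delta_K(t) = \tilde\Delta_J(t)\cdot g(t), \qquad g\in\Z[t].\]
This is possible after multiplying by suitable powers of $t$, since both $\tilde\Delta$'s have nonzero constant term. That follows from $\Delta(0)$-symmetry: the leading and constant coefficients of a symmetrized Alexander polynomial agree and are nonzero.

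\textbf{Step 2.} Reduce this identity mod $p$. Then every root of $\tilde\Delta_J(t)$ in $\overline{\F}_p$ is a root of $\tilde\Delta_K(t)$ in $\overline{\F}_p$. One small point needs checking: reduction mod $p$ might drop the degree, but that only removes roots, and the divisibility remains valid over $\F_p[t]$ regardless.

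\textbf{Step 3.} Apply Lemma~\ref{lem:resultant-nth-root}. If $\Sigma_n(K)$ is a $\Z/p$-homology sphere, then no root of $\tilde\Delta_K$ in $\overline{\F}_p$ is an $n$th root of unity. Hence no root of $\tilde\Delta_J$ is one either, so $\Sigma_n(J)$ is a $\Z/p$-homology sphere by the same lemma.

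The main obstacle is justifying the divisibility $\Delta_J\mid\Delta_K$. If it cannot simply be cited, I would prove it by taking the infinite cyclic cover $\widetilde X$ of the ribbon concordance complement $X$ and arguing in three steps. First, $X$ is built from $S^3\setminus\nu J$ by attaching only $1$- and $2$-handles. Second, by Gordon's argument, $H_1$ of the cover of $J$'s complement injects into $H_1(\widetilde X;\Q)$. Third, the dual handle decomposition shows that $H_1$ of $K$'s complement surjects onto $H_1(\widetilde X)$. Comparing orders over $\Q[t^{\pm1}]$ then gives the divisibility up to units, and Gauss's lemma together with $\Delta(1)=\pm1$ promotes it to $\Z[t^{\pm1}]$.
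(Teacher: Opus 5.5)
Your proposal is correct and follows the paper's proof: Gilmer's divisibility $\Delta_J\mid\Delta_K$ places every root of $\tilde\Delta_J$ in $\overline{\F}_p$ among the roots of $\tilde\Delta_K$, and Lemma~\ref{lem:resultant-nth-root} is applied once for $K$ and once for $J$. Your Step~1 normalization and your fallback sketch of Gilmer's theorem are extra detail that the paper leaves to the citation. In that sketch, the $H_1$-injectivity does not follow from Gordon's $\pi_1$-injectivity; it comes from the square relative presentation matrix, which has determinant $\pm1$ at $t=1$.
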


\begin{proof}
By Lemma~\ref{lem:resultant-nth-root}, no root $\omega \in \overline{\F}_p$ of $\tilde\Delta_K(t)$ satisfies $\omega^n = 1$.  Since $J$ is ribbon concordant to $K$, a theorem of Gilmer \cite{gilmer} says that $\Delta_J(t)$ divides $\Delta_K(t)$, so any root $\zeta \in \overline{\F}_p$ of $\tilde\Delta_J(t)$ is also a root of $\tilde\Delta_K(t)$ and hence satisfies $\zeta^n \neq 1$.  Another appeal to Lemma~\ref{lem:resultant-nth-root} then allows us conclude that $\Sigma_n(J)$ is also a $\Z/p$-homology sphere.
\end{proof}

We  now describe the construction and properties of the set $S=S_{K,p}$ which  features in Theorem \ref{thm:branched-cover-ribbon}, as well as in Theorem \ref{thm:hfk-cover} in the case when $p=2$:

\begin{proposition} \label{prop:resultants}
For each knot $K \subset S^3$ and prime $p$, there is a finite set $S_{K,p}$ of primes with the following properties:
\begin{enumerate}
\item $\Sigma_n(K)$ is a $\Z/p$-homology sphere if $n$ is not a multiple of any element of $ S_{K,p}$,  \label{i:SKp-branched-cover}
\item $p \not\in S_{K,p}$, \label{i:p-not-in-SKp}
\item $S_{K,p}$ depends only on $p$ and the mod $p$ reduction of $\Delta_K(t)$, \label{i:SKp-depends-on-Delta}
\item if $\Delta_J(t)$ divides $\Delta_K(t)$ then $S_{J,p} \subset S_{K,p}$, \label{i:SKp-divisibility}
\item the product of all elements of $S_{K,p}$ satisfies \label{i:SKp-product}
\[ \prod_{q\in S_{K,p}} q \leq p^{2\deg \Delta_K(t)} \leq p^{2g(K)}. \]
\end{enumerate}
\end{proposition}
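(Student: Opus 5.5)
Reduce $\tilde\Delta_K(t)$ mod $p$ and call the result $\bar\Delta_K(t)\in\F_p[t]$. Since $\Delta_K(1)=\pm1$, $\bar\Delta_K$ is nonzero of degree at most $\deg\tilde\Delta_K = 2\deg\Delta_K$ (degree counted as the span of the symmetric Laurent polynomial). Moreover $0$ and $1$ are not roots of $\bar\Delta_K$: the constant term of $\tilde\Delta_K$ is the leading coefficient, and although this could vanish mod $p$, the root $t=0$ never satisfies $t^n=1$, so it is irrelevant. Every root $\omega\in\overline{\F}_p$ of $\bar\Delta_K$ with $\omega\neq 0$ lies in the finite field $\F_p(\omega)$, so it has finite multiplicative order $\operatorname{ord}(\omega)$. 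This order is prime to $p$, since it divides $p^k-1$. It is also $\neq 1$, because $\bar\Delta_K(1)=\pm1\neq 0$.

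Define $S_{K,p}$ to be the set of all primes $q$ dividing $\operatorname{ord}(\omega)$ for some nonzero root $\omega$ of $\bar\Delta_K$. We now check the properties in turn.

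(1) Suppose $n$ is divisible by no element of $S_{K,p}$. If $\omega^n=1$ for some such root $\omega$, then $\operatorname{ord}(\omega)$ divides $n$. Since $\operatorname{ord}(\omega)>1$, some prime factor $q$ of $\operatorname{ord}(\omega)$ lies in $S_{K,p}$ and divides $n$, a contradiction. Lemma~\ref{lem:resultant-nth-root} then gives (1).

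(2) Orders are prime to $p$, so $p\notin S_{K,p}$.

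(3) The construction uses only $p$ and $\bar\Delta_K$. The normalization by $t^{\deg\Delta_K}$ only shifts by a power of $t$, which does not affect the nonzero roots.

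(4) If $\Delta_J$ divides $\Delta_K$ in $\Z[t^{\pm1}]$, then the nonzero roots of $\bar\Delta_J$ in $\overline{\F}_p$ are among those of $\bar\Delta_K$, so $S_{J,p}\subset S_{K,p}$.

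(5) This is the main step. Factor $\bar\Delta_K$ into irreducibles over $\F_p$, ignoring the factor $t$. Let $f$ be an irreducible factor of degree $d$. All roots of $f$ are Galois conjugates (Frobenius powers), so they share a common order $m_f$. The root $\omega$ generates $\F_{p^d}^\times$ only partially, but $\omega\in\F_{p^d}$, so $m_f$ divides $p^d-1$. Hence the product of the distinct primes dividing $m_f$ is at most $m_f\le p^d-1<p^d$.

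Every prime in $S_{K,p}$ divides some $m_f$. Therefore
\[\prod_{q\in S_{K,p}} q\le\prod_f p^{\deg f}\le p^{\deg\bar\Delta_K}\le p^{2\deg\Delta_K}.\]
Finally, $2\deg\Delta_K\le 2g(K)$ by the Seifert matrix bound on the Alexander polynomial, in the convention where $\deg\Delta_K$ is its top exponent. This needs care with the convention: under the paper's normalization $\tilde\Delta_K=t^{\deg\Delta_K}\Delta_K$, the stated inequality reads $\deg\Delta_K\le g(K)$.

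The only delicate points are this degree bookkeeping and the claim that each root's order is finite and greater than $1$. Both reduce to $\Delta_K(1)=\pm1$ and the finiteness of $\F_p(\omega)$.
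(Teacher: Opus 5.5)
Your proof is correct and follows essentially the same route as the paper. Both arguments rest on three facts: each nonzero root of $\tilde\Delta_K$ mod $p$ lies in some $\F_{p^{d}}$, so its order divides $p^{d}-1$ and is prime to $p$; that order exceeds $1$ because $\Delta_K(1)=\pm1$; and the degrees of the distinct irreducible factors sum to at most $2\deg\Delta_K$. The only difference is that you take $S_{K,p}$ to be the primes dividing the actual multiplicative orders of the roots, whereas the paper takes all primes dividing $\prod_j (p^{d_j}-1)$. Your set is contained in the paper's, and all five properties go through identically. Your opening claim that $0$ is not a root of $\bar\Delta_K$ is wrong in general, but you correct it in the same sentence and it does not affect the argument.
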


\begin{proof}
Lemma~\ref{lem:resultant-nth-root} tells us that $\Sigma_n(K)$ is a $\Z/p$-homology sphere if and only if no root of $\tilde\Delta_K(t)$ in $\overline{\F}_p$ has multiplicative order dividing $n$.  We will therefore find an integer that is simultaneously a multiple of the multiplicative orders of all nonzero roots of $\tilde\Delta_K(t)$, and we will take $S_{K,p}$ to be the set of its prime divisors.

We factor $\tilde\Delta_K(t)$ into irreducibles in $\F_p[t]$ as
\[ \tilde\Delta_K(t) = t^a \cdot f_1(t) \cdot f_2(t) \cdot \ldots \cdot f_k(t), \]
where $f_j(t) \neq t$ and hence $f_j(0) \neq 0$ for all $j$; note that $a \geq 0$ is only positive if the leading coefficient of $\Delta_K(t)$ is zero mod $p$.  Let $d_j = \deg f_j(t) \geq 1$ for each $j=1,\dots,k$.  Then the roots of $f_j(t)$ have degree $d_j$ over $\F_p$, meaning that they live in the unique degree-$d_j$ extension $\F_{p^{d_j}}$ of $\F_p$.  Since the multiplicative group $\F_{p^{d_j}}^\times$ is finite of order $p^{d_j}-1$, we see that each root of $f_j(t)$ in $\overline{\F}_p$ has finite multiplicative order dividing $p^{d_j} - 1$.  We thus define
\[ S_{K,p} = \left\{ q \text{ prime} \,\middle\vert\, q \text{ divides } \prod_{j=1}^k (p^{d_j} - 1) \right\}. \]
It is immediate from the definition that $S_{K,p}$ is finite, that $p\not\in S_{K,p}$, and that $S_{K,p}$ is determined entirely by $p$ and the mod $p$ factorization of $\Delta_K(t)$; these last two claims are properties~\eqref{i:p-not-in-SKp} and \eqref{i:SKp-depends-on-Delta} of the proposition.  Property~\eqref{i:SKp-divisibility} is also immediate, because if $\Delta_J(t)$ divides $\Delta_K(t)$ then any irreducible factor of $\tilde\Delta_J(t)$ in $\F_p[t]$ is also an irreducible factor of $\tilde\Delta_K(t)$.

To verify property~\eqref{i:SKp-branched-cover}, we suppose that $n$ is not a multiple of any $q \in S_{K,p}$.  Given any root $\omega \in \overline{\F}_p$ of $\tilde\Delta_K(t)$, say of multiplicative order $m \geq 1$, we must have $m > 1$, because $\Delta_K(1)=1$ implies that $\omega \neq 1$.  Thus $m$ has at least one prime divisor $r$, and this $r$ must belong to $S_{K,p}$: indeed, $\omega$ is a root of one of the irreducible factors $f_j(t)$, so its order $m$ divides the corresponding $p^{d_j}-1$, whence $r$ divides $p^{d_j}-1$ as well.  But then $n$ cannot be a multiple of $m$ since it is not a multiple of $r$, and so $\omega^n - 1 \neq 0$.  We have now shown that no root of $\tilde\Delta_K(t)$ in $\overline{\F}_p$ is simultaneously a root of $t^n-1$, so it follows as explained above that $\Sigma_n(K)$ is a $\Z/p$-homology sphere.

This leaves only property~\eqref{i:SKp-product}, for which we observe by construction that $\prod_{q \in S_{K,p}} q$ divides the product $\prod_{j=1}^k (p^{d_j}-1)$.  This latter product satisfies
\[ \prod_{j=1}^k (p^{d_j}-1) \leq \prod_{j=1}^k p^{d_j} = p^{\sum_j d_j} \leq p^{\deg \tilde\Delta_K(t)} = p^{2\deg\Delta_K(t)}, \]
so the product of all $q \in S_{K,p}$ is bounded above by $p^{2\deg\Delta_K(t)} \leq p^{2g(K)}$ as claimed.
\end{proof}

\begin{proof}[Proof of Theorem~\ref{thm:branched-cover-ribbon}]
Let $S = S_{K,p}$ be the finite set of primes provided by Proposition~\ref{prop:resultants}.  If $n$ is not a multiple of any element of $S$ then $\Sigma_n(K)$ is a $\Z/p$-homology sphere, and then Corollary~\ref{cor:ribbon-outgoing-homology-sphere} guarantees that $\Sigma_n(J)$ is as well.  This implies that
\[ H_i(\Sigma_n(J) \setminus \nu(J_n); \F_p) \cong H_i(\Sigma_n(K) \setminus \nu(K_n); \F_p) \cong
\begin{cases} \F_p & i=0,1 \\ 0 & i \geq 2, \end{cases} \]
where in each case the group $H_1 \cong \F_p$ is generated by the corresponding meridian $\mu_{J_n}$ or $\mu_{K_n}$.

Following the proof of \cite[Lemma 2.2]{bs-ribbon}, we note that $S^3 \times I \setminus \nu(C)$ can be built from $S^3 \setminus \nu(J)$ by attaching $4$-dimensional $1$- and $2$-handles since $C$ is a ribbon concordance, and that there are exactly as many $1$-handles as $2$-handles since $S^3 \times I \setminus \nu(C)$ has Euler characteristic zero.  This lifts to a handle decomposition of the $n$-fold cyclic cover $W_n \setminus \nu(C_n)$, in which we attach equal numbers of $1$- and $2$-handles to $\Sigma_n(J) \setminus \nu(J_n)$.  This shows that \[W_n \setminus \nu(C_n):\Sigma_n(J) \setminus \nu(J_n) \to \Sigma_n(K) \setminus \nu(K_n)\] is a ribbon cobordism, with Euler characteristic
\begin{equation} \label{eq:chi-wn}
\chi(W_n \setminus \nu(C_n)) = n \cdot \chi(S^3 \times I \setminus \nu(C)) = 0,
\end{equation}
and that
\begin{equation} \label{eq:h1-wn-positive}
\dim_{\F_p} H_1(W_n \setminus \nu(C_n); \F_p) \geq \dim_{\F_p} H_1(\Sigma_n(J) \setminus \nu(J_n); \F_p) = 1.
\end{equation}
Flipping this upside down, we can also build $W_n \setminus \nu(C_n)$ by attaching equal numbers of $3$- and $2$-handles to $\Sigma_n(K) \setminus \nu(K_n)$, which implies that the map
\begin{equation} \label{eq:h1-surjection-wn}
\F_p \cong H_1(\Sigma_n(K) \setminus \nu(K_n); \F_p) \to H_1(W_n \setminus \nu(C_n); \F_p)
\end{equation}
induced by inclusion is surjective.  We combine \eqref{eq:h1-wn-positive} and \eqref{eq:h1-surjection-wn} to get
\[ H_1(W_n \setminus \nu(C_n); \F_p) \cong \F_p, \]
which implies in turn that the surjection \eqref{eq:h1-surjection-wn} is an isomorphism.  We then conclude that the inclusion-induced map
\begin{equation} \label{eq:h1-injection-wn}
\F_p \cong H_1(\Sigma_n(J) \setminus \nu(J_n); \F_p) \to H_1(W_n \setminus \nu(C_n); \F_p) \cong \F_p
\end{equation}
is likewise an isomorphism, because it is injective: the meridian $\mu_{J_n}$ that generates the homology on the left is homologous in $W_n \setminus \nu(C_n)$ to the meridian $\mu_{K_n}$, which in turn generates the right side  since \eqref{eq:h1-surjection-wn} is an isomorphism.

Now for all integers $i \geq 3$ we have \[H_i(W_n \setminus \nu(C_n); \F_p) = 0,\] because $W_n \setminus \nu(C_n)$ can be built from $\Sigma_n(J) \setminus \nu(J_n)$ using handles of index at most $2$ and $H_i(\Sigma_n(J) \setminus \nu(J_n); \F_p) = 0$.  Having determined that
\[ \dim_{\F_p} H_i(W_n \setminus \nu(C_n); \F_p) = \begin{cases} 1 & i=0,1 \\ 0 & i \geq 3, \end{cases} \]
it now follows from the Euler characteristic computation \eqref{eq:chi-wn} that
\[ H_2(W_n \setminus \nu(C_n); \F_p) = 0 \]
as well.  Since the mod $p$ homology groups of $W_n \setminus \nu(C_n)$ and of the knot complements $\Sigma_n(J) \setminus \nu(J_n)$ and $\Sigma_n(K) \setminus \nu(K_n)$ vanish in all degrees greater than $1$, and since \eqref{eq:h1-surjection-wn} and \eqref{eq:h1-injection-wn} are isomorphisms, we can at last conclude that $W_n \setminus \nu(C_n)$ is indeed an $\Z/p$-homology cobordism.
\end{proof}

\begin{proof}[Proof of Theorem \ref{thm:hfk-cover}]
The $p=2$ case of Theorem \ref{thm:branched-cover-ribbon} says that for each knot $K\subset S^3$ there is a finite set $S$ of primes such that $\Sigma_n(J)\setminus \nu(J_n)$ is ribbon $\Z/2$-homology cobordant to $\Sigma_n(K)\setminus \nu(K_n)$ for any knot $J\leq K$ and every  $n$ which is not  a multiple of any $q\in S$. But the fact that these knot complements are ribbon $\Z/2$-homology cobordant implies that \[\dim \HFK(\Sigma_n(J),J_n) \leq \dim\HFK(\Sigma_n(K), K_n),\] by the work of Daemi, Lidman, Vela-Vick, and Wong in \cite[Corollary~4.13]{dlvvw}.\end{proof}

\section{Generalized satellite knots}
\label{sec:generalized-satellite}

In proving Theorem \ref{thm:main}, we will study lifts $J_n\subset \Sigma_n(J)$ of satellite  knots $J\leq K$, in cases where $\Sigma_n(J)$ is a rational homology sphere. Such lifts are examples of what we call \emph{generalized satellite knots}. In this section, we provide background and  results about generalized satellites which we will use to prove the inequality in Theorem \ref{thm:gensat} and ultimately Theorem \ref{thm:main}. The results here are not especially novel, but we could not find a satisfying reference for many of them.

Let $M$ be a compact orientable 3-manifold with torus boundary, which satisfies $H_2(M) = 0$.  Then the long exact sequence on homology associated with the pair $(M,\partial M)$ gives an exact sequence
\[ 0 \to H_2(M,\partial M; \Z) \xrightarrow{\partial_*} \underbrace{H_1(\partial M; \Z)}_{\cong \,\Z^2} \xrightarrow{i_*} H_1(M; \Z). \]
The map $i_*$ has rank 1 over $\Q$ by half lives half dies (see, for example, \cite[Lemma 3.5]{hatcher-3mfld}); hence, so does $\partial_*$.  This implies that \[H_2(M,\partial M ;\Z) \cong \Z,\] since it injects into a free abelian group with image of rank 1.  
Let $S \subset M$ be a properly embedded, oriented surface such that $[S]$ generates $H_2(M,\partial M;\Z)$. Then we can write
\[ \partial_*[S] = [\partial S] = n \cdot \alpha_M \]
for some integer $n \geq 1$ and some primitive class \[\alpha_M \in H_1(\partial M; \Z).\]  Note that the integer $n$ is well-defined and the pair $([S],\alpha_M)$ is well-defined up to an overall sign. We may now make the following two definitions in terms of the notation above.

\begin{definition}\label{def:rational-longitude}
The class $\alpha_M$ is called the \emph{rational longitude} of $M$. We will often view it instead as an  unoriented curve in the torus $\partial M$, where it is well-defined up to isotopy.
\end{definition}

\begin{definition}\label{def:winding-number}
Given a knot $P\subset M$, oriented so that the algebraic intersection number $[S] \cdot [P]$ is nonnegative, we define the \emph{winding number} of $P$ to be the rational number \[ w = \frac{[S] \cdot [P]}{n} \geq 0. \] Note that this quantity is independent of $S$.
\end{definition}

\begin{remark}\label{rmk:relation-S}
In the definition above, we may assume that $P$ is transverse to $S$, and consider its intersection with the exterior \[E_P = M \setminus \nu(P).\]  If $\mu_P \subset \partial E_P$ is the meridian of $P$, then $S \cap E_P$ gives rise to a relation
\begin{equation} \label{eq:rational-longitude-vs-muP}
[\partial S]=n \alpha_M = nw \cdot [\mu_P]
\end{equation}
in $H_1(E_P;\Z)$. If $\alpha_M$ is actually nullhomologous in $M$, as opposed to just rationally so, then $n=1$ and this relation simplifies to $\alpha_M = w\cdot [\mu_P]$; notably, this applies whenever $M$ is a solid torus.
\end{remark}

We may now give the main definition of this section.

\begin{definition}\label{def:generalized-satellite}
A \emph{generalized satellite knot} is a knot $K$ in a rational homology 3-sphere $Y$ whose exterior
\[ E_K = Y \setminus \nu(K) \]
is irreducible, has incompressible boundary, and contains an embedded incompressible torus $T$ that is not boundary-parallel.  We may therefore write
\[ Y = M_1 \cup_T M_2, \qquad \partial M_1 = T = -\partial M_2,\qquad K \subset \inr(M_2). \]
The fact that $H_2(Y;\Z)=0$ implies via the Mayer--Vietoris sequence that
\begin{equation} \label{eq:h2-satellite}
H_2(M_1;\Z) = H_2(M_2;\Z) = 0,
\end{equation}
and we further require as part of this definition that the rational longitudes $\alpha_{M_1}, \alpha_{M_2} \subset T$ have pairwise distance one. We  refer to $T$ as the \emph{satellite torus.}
\end{definition}

Just as for ordinary satellite knots in $S^3$, there are  natural notions of \emph{pattern} and \emph{companion} for generalized satellite knots. For the two definitions below, let $K \subset Y$ be a generalized satellite knot with satellite torus $T \subset Y$ and  corresponding decomposition \[Y = M_1\cup_T M_2,\] as in Definition \ref{def:generalized-satellite}. 

\begin{definition} \label{def:generalized-pattern}
The \emph{pattern} is the knot $P \subset M_2$ whose image under the inclusion $M_2 \hookrightarrow Y$ is $K$.
\end{definition}

\begin{definition} \label{def:generalized-companion}
Let $\mu_C \subset \partial M_1$ be the curve identified with $\alpha_{M_2} \subset \partial M_2$ in $T$, and let
\[ Z = M_1(\mu_C) \]
be the closed 3-manifold obtained by Dehn filling $M_1$ along $\mu_C$.  The \emph{companion knot} \[C \subset Z\] is given by the core of the filling solid torus.
\end{definition}

\begin{remark}\label{rmk:dual}
The companion $C$ has exterior $M_1$, and in the torus $T = \partial M_1$ its rational longitude $\lambda_C = \alpha_{M_1}$ is geometrically dual to its meridian $\mu_C = \alpha_{M_2}$, by Definition \ref{def:generalized-satellite}.
\end{remark}

As mentioned at the top, we will be especially interested in generalized satellite knots that are the lifts of satellite knots in $S^3$ to their branched cyclic covers. Given a knot $K\subset S^3$, let us denote by $K_n$ the lift of $K$ to its $n$-fold branched cyclic cover $\Sigma_n(K)$, as usual. We note the following:

\begin{proposition} \label{prop:branched-cover-satellite}
Let $K \subset S^3$ be a  satellite knot with companion $C \subset S^3$, whose pattern has winding number $w \geq 1$, and let $n$ be a natural number  relatively prime to $w$.  If $\Sigma_n(K)$ is a rational homology sphere, then $K_n \subset \Sigma_n(K)$ is a generalized satellite knot with companion $C_n \subset \Sigma_n(C)$, whose pattern also has winding number $w$.
\end{proposition}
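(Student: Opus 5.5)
Write $S^3 = E_C \cup_T V$, where $E_C = S^3\setminus\nu(C)$ and $V$ is the solid torus containing the pattern $P$, with $K\subset \inr(V)$. The branched cover $\Sigma_n(K)$ is then the union of two pieces. The first is the $n$-fold cyclic branched cover $\widetilde V$ of $V$ along $P$. The second is the cover $\widetilde{E_C}$ of $E_C$ induced by $\pi_1(E_C) \to H_1(S^3\setminus K) \to \Z/n$.

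A meridian $\mu_C$ of $C$ is homologous in $V\setminus P$ to $w$ times the meridian $\mu_K$ (Remark~\ref{rmk:relation-S}). Hence the composite $H_1(E_C)\cong\Z\to\Z/n$ sends $\mu_C\mapsto w$, and the Seifert longitude $\lambda_C$ maps to $0$. Since $\gcd(n,w)=1$, this map is surjective and its kernel is generated by $n\mu_C$ (up to adding $\lambda_C$). So $\widetilde{E_C}$ is connected and is the ordinary $n$-fold cyclic cover of $E_C$, that is, the exterior of $C_n\subset\Sigma_n(C)$. Over $T$ the cover is a single torus $\widetilde T$, on which $\lambda_C$ lifts to $n$ distinct parallel copies of a curve $\tilde\lambda$ and $n\mu_C$ lifts to a curve $\tilde\mu$. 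Thus $M_1:=\widetilde{E_C}$ is the exterior of $C_n$, with $\tilde\mu$ its meridian.

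Set $M_2:=\widetilde V$, so that $\Sigma_n(K)=M_1\cup_{\widetilde T}M_2$. I would then verify the items of Definition~\ref{def:generalized-satellite} in order.

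\textbf{Homology.} Because $\Sigma_n(K)$ is a rational homology sphere, $H_2(M_i)=0$ follows from Mayer--Vietoris as in \eqref{eq:h2-satellite}. Moreover $\Sigma_n(C)$ is a rational homology sphere: the Alexander polynomial of $K$ factors as $\Delta_K(t)=\Delta_C(t^w)\Delta_P(t)$, and $\zeta\mapsto\zeta^w$ permutes the $n$-th roots of unity, so $\prod_{\zeta^n=1}\Delta_C(\zeta)$ divides $|H_1(\Sigma_n(K))|\ne 0$ by Fox's formula.

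\textbf{Rational longitudes.} The rational longitude of $M_1$ is $\tilde\lambda$. For $M_2$, the Seifert surface argument of Remark~\ref{rmk:relation-S} applies: a meridian disk $D$ of $V$ meets $P$ in algebraic intersection $w$, and its preimage in $\widetilde V$ is a surface with boundary $\tilde\mu$ (or a multiple of it). Hence $\alpha_{M_2}=\tilde\mu$. Since $\tilde\mu\cdot\tilde\lambda=1$ on $\widetilde T$, the distance condition holds, and the companion defined by Definition~\ref{def:generalized-companion} is exactly $C_n\subset\Sigma_n(C)$.

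\textbf{Winding number.} The lift $D$ above meets $K_n$ in $w$ points algebraically, because the branch locus meets each lift of $D$ once per point of $D\cap P$. This gives winding number $w$. I would double-check the normalization by $n$ in Definition~\ref{def:winding-number}.

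\textbf{Main obstacle: the geometric conditions.} Still to prove are that $E_{K_n}$ is irreducible with incompressible boundary and that $\widetilde T$ is incompressible and not boundary-parallel. My plan is to use the equivariant sphere and torus theorems (Meeks--Yau), or more simply the fact that covers of irreducible manifolds are irreducible and that $\pi_1$-injective tori remain $\pi_1$-injective in finite covers.
\begin{itemize}
\item The unbranched part $E_{K_n}$ is a finite cyclic cover of $E_K$, so it is irreducible with incompressible boundary, and $\widetilde T$ is incompressible in it.
\item $\widetilde T$ is not boundary-parallel: otherwise $M_2\setminus\nu(K_n)$ would be $T^2\times I$, and its quotient $V\setminus\nu(P)$ would be finitely covered by $T^2\times I$, hence would itself be $T^2\times I$. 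That forces $P$ to be a core of $V$, contradicting that $T$ is not boundary-parallel in $E_K$.
\end{itemize}
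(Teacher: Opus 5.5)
Your proposal is correct and follows essentially the same route as the paper. Both arguments lift the decomposition $S^3=E_C\cup_T V$, use $\gcd(n,w)=1$ to see that $T$ lifts to a single torus bounding the exterior of $C_n$, identify the rational longitudes as $\lambda_{C_n}$ and $\mu_{C_n}$, compute the winding number via the lifted meridian disk, and use the fact that irreducibility and $\pi_1$-injectivity pass to finite covers. Your Alexander-polynomial check that $\Sigma_n(C)$ is a rational homology sphere is redundant, since it already follows from $H_2(M_1)=0$ and the distance-one condition; your non-boundary-parallel argument is more explicit than the paper's, though the step ``finitely covered by $T^2\times I$ implies $T^2\times I$'' quietly relies on standard classification results for irreducible manifolds with virtually $\Z\oplus\Z$ fundamental group.
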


\begin{proof}
Since $K$ is a satellite with companion $C$, we can write \[S^3 = M_1\cup_T M_2,\] where $T$ is the satellite torus, \[M_1 = S^3\setminus \nu(C), \quad \textrm{and}\quad M_2\cong S^1\times D^2\] is a solid torus containing the pattern knot $P$. The claim is that since $n$ is relatively prime to $w$, this decomposition lifts to a generalized satellite decomposition for $K_n \subset\Sigma_n(K)$. 

Note that the branched covering $\Sigma_n(K)\to S^3$ restricts to the $n$-fold cyclic covering \[\Sigma_n(K)\setminus \nu(K_n)\to S^3\setminus \nu(K),\] corresponding to the kernel of the homomorphism \[\rho:\pi_1(S^3\setminus \nu(K)) \to \Z/n\] which sends the meridian $\mu_K=\mu_P$ to a generator. We have that \[[\mu_C] :=[\mu_{M_2}]= w[\mu_P]\] by \eqref{eq:rational-longitude-vs-muP}, so the fact that $n$ is relatively prime to $w$ implies that $\rho$ also sends $\mu_C$ to a generator of $\Z/n$. It follows that the preimage \[T_n\subset \Sigma_n(K)\] of $T$ is still a single torus, and that $T_n$ separates $\Sigma_n(K)$ into two pieces, \[\Sigma_n(K) = (M_1)_n \cup_{T_n} (M_2)_n,\] where the first piece is the $n$-fold cyclic cover of $S^3\setminus \nu(C)$, or equivalently, \[(M_1)_n = \Sigma_n(C)\setminus \nu(C_n),\] and the second piece is the $n$-fold branched cyclic cover of $M_2$ along $P$. We must check that this decomposition satisfies the requirements in the definition of a generalized satellite. 

First, note that  $\Sigma_n(K)$ is a rational homology sphere by assumption. 

Second, since $S^3\setminus \nu(K)$ is irreducible with incompressible torus boundary, the same is true of its $n$-fold cyclic cover, which is precisely the knot complement $\Sigma_n(K)\setminus \nu(K_n)$.  Moreover, $T_n$ is incompressible and non-boundary-parallel in this complement since it's the lift of the incompressible, non-boundary-parallel torus $T$ under the cyclic covering. For the fact that irreducibility is inherited by the cover, see, for example, \cite[Theorem 3.15]{hatcher-3mfld}. The fact that the incompressibility of these surfaces is preserved under lifting to the cover follows easily from the fact that a 2-sided surface is incompressible if and only if it is $\pi_1$-injective; see, for example, \cite[Corollary 3.3]{hatcher-3mfld}.

Third, the rational longitude $\alpha_{(M_1)_n}$ is just the Seifert longitude $\lambda_{C_n}$, while the rational longitude $\alpha_{(M_2)_n}$ is simply the meridian $\mu_{C_n}$. These curves are dual on $T_n$ as desired.

It follows that $K_n\subset\Sigma_n(K)$ is a generalized satellite knot with companion $C_n\subset \Sigma_n(C)$.

For the last claim of the proposition, let $P_n\subset (M_2)_n$ be the lift of $P\subset M_2$. Let $S$ be a meridional disk in $M_2$  transverse to $P$  whose algebraic intersection with $P$ is nonnegative. Then the winding number is given by this algebraic intersection number, \[w=[S]\cdot[P],\] by Definition \ref{def:winding-number}. Now, let $S_n$ be the lift of $S$ in $(M_2)_n$, and observe that \[\partial S_n = \mu_{C_n} = \alpha_{(M_2)_n}.\] Since each point on $P$ has a unique preimage in the branched cover, and the covering map preserves local orientations, the winding number of $P_n$ is  given by \[[S_n]\cdot [P_n] = [S] \cdot [P] = w\] as well.
\end{proof}

The lemma below describes how the meridian and Seifert longitude of a nullhomologous generalized satellite knot are related to the meridian and (rational) longitude of its companion. 

\begin{lemma} \label{lem:winding-number-satellite}
Let $K \subset Y$ be a nullhomologous generalized satellite knot, with satellite torus $T \subset Y$ and corresponding decomposition \[Y = M_1 \cup_T M_2.\]  Let $P \subset M_2$ and $C \subset Z$ be the associated pattern and companion knots.  Let \[E_K = Y \setminus \nu(K), \qquad E_P = M_2\cap E_K = M_2 \setminus \nu(P),\] with $\mu_K$ and $\lambda_K$ denoting the meridian and Seifert longitude of $K$ in each exterior.
If $P$ has winding number $w \geq 0$, then $w$ is an integer and any Seifert surface $\Sigma$ for $K$ meeting $T$ transversely satisfies
\[ [\Sigma \cap T] = w[\lambda_C] \]
as elements of $H_1(T;\Z)$.  In particular, this gives us relations $[P] = w[\lambda_C]$ in $H_1(M_2;\Z)$, and
\begin{align*}
[\lambda_K] &= w[\lambda_C], \\
nw[\mu_K] &= n[\mu_C]
\end{align*}
in $H_1(E_P;\Z)$, where $n \geq 1$ is the order of the rational longitude $\alpha_{M_2}$.
\end{lemma}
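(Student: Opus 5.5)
The plan is to work with the surface $\Sigma\cap M_1$ and $\Sigma\cap M_2$ and read off homology classes on $T$ via the half-lives-half-dies structure from the beginning of this section. Since $K$ is nullhomologous, choose a Seifert surface $\Sigma$ transverse to $T$. Then $\Sigma_1 := \Sigma\cap M_1$ is a properly embedded surface in $M_1$ with $\partial\Sigma_1 = \Sigma\cap T$. Hence $[\Sigma\cap T]$ lies in the image of $\partial_*: H_2(M_1,\partial M_1)\to H_1(T)$, which is generated by $n_1\alpha_{M_1} = n_1[\lambda_C]$, where $n_1$ is the order of $\alpha_{M_1}$. So $[\Sigma\cap T] = k[\lambda_C]$ for some integer $k$. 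The task is then to identify $k$ with $w$, which forces $w\in\Z$.

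To compute $k$, take the generating surface $S\subset M_2$ with $\partial S = n\alpha_{M_2} = n[\mu_C]$. I will pair $[\Sigma\cap T]$ with $[\partial S]$ on $T$. Since $\lambda_C$ and $\mu_C$ are dual (Remark~\ref{rmk:dual}), the intersection number on $T$ is
\[ [\Sigma\cap T]\cdot[\partial S] = k[\lambda_C]\cdot n[\mu_C] = \pm kn. \]
On the other hand, $\Sigma_2 := \Sigma\cap M_2$ has boundary $(\Sigma\cap T)\cup \lambda_K$, if we work in $E_P$. Making $S$ and $\Sigma_2$ transverse, the arcs and circles of $S\cap\Sigma_2$ give the standard count: the endpoints of arcs on $T$ account for $\partial S\cdot(\Sigma\cap T)$, while the remaining endpoints lie on $\partial\nu(P)$. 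Those are counted by $S\cap\lambda_K$, that is, by the intersections of $S$ with $P$, giving $[S]\cdot[P] = nw$. Equating the two counts (with orientations fixed so that both are nonnegative) gives $kn = nw$, so $k = w$.

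Alternatively, and perhaps more cleanly, I would use $H_1(M_2)$. The class $\lambda_K$ is isotopic to $P$ in $M_2$, and $\Sigma_2$ shows $[P] = [\Sigma\cap T]$ in $H_1(M_2)$. Pairing with $[S]\in H_2(M_2,\partial M_2)$ via the Lefschetz duality pairing then gives $nw = [S]\cdot[P] = [S]\cdot k[\lambda_C] = kn$, using $\partial S\cdot\lambda_C = n$ on $T$. I expect the main obstacle to be these sign and orientation conventions: checking that $\Sigma\cap T$ is oriented compatibly so that $k\ge 0$, and that the intersection pairing of $[S]$ with a class carried on $T$, pushed into $M_2$, equals the intersection on $T$ of $\partial S$ with it.

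The stated relations then follow. $\Sigma_2$ gives $[\lambda_K] = [\Sigma\cap T] = w[\lambda_C]$ in $H_1(E_P)$ and $[P] = w[\lambda_C]$ in $H_1(M_2)$. The relation $nw[\mu_K] = n[\mu_C]$ is exactly \eqref{eq:rational-longitude-vs-muP} from Remark~\ref{rmk:relation-S}, applied to $S\cap E_P$, with $\mu_P = \mu_K$ and $\alpha_{M_2} = \mu_C$.
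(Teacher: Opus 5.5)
Your proposal is correct, and your second (``cleaner'') argument is essentially the paper's proof. The paper also writes $[\Sigma\cap T]$ as an integer multiple of $[\lambda_C]$ using the kernel of $H_1(T)\to H_1(M_1)$, and uses $\Sigma\cap E_P$ to get $[P]$ equal to that multiple in $H_1(M_2)$. It then pairs with $[S]$, using $[S]\cdot[\lambda_C]=n$, to identify the multiple with $w$, and derives the remaining relations just as you do. Your first argument, via cancellation of the boundary intersection points of $S\cap E_P$ and $\Sigma\cap E_P$, is a valid equivalent variant.
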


\begin{proof}
Let $\Sigma \subset E_K$ be a Seifert surface for $K$ which is transverse to $T$.  Let $k \geq 1$ denote the order of the rational longitude $[\lambda_C] = \alpha_{M_1}$ in $H_1(M_1;\Z)$. 
Observe that $\Sigma \cap M_1$ is a properly embedded surface in $M_1$, so its boundary class
\begin{equation*} \label{eq:boundary-sigma-y0}
\partial_*[\Sigma \cap M_1] \in H_1(\partial M_1;\Z) = H_1(T;\Z)
\end{equation*}
is in the kernel of the inclusion-induced map \[H_1(T;\Z)\to H_1(M_1;\Z),\]  and is thus an integer multiple
\begin{equation}\label{eq:boundary-mk}
\partial_*[\Sigma \cap M_1]= km[\lambda_C]\end{equation}
of $k$ times the rational longitude.  Note that \[\Sigma \cap  E_P\subset M_2\] extends to the core of the tubular neighborhood $\nu(P)$ to have boundary $P$, producing a relation
\begin{equation} \label{eq:P-lambdaC}
[P] = km[\lambda_C]
\end{equation}
in $H_1(M_2;\Z)$.

Now, the meridian $\mu_C$ and rational longitude $\lambda_C$ are  dual to each other in $T$, as noted in Remark \ref{rmk:dual}, and $\mu_C$ is identified in $T$ with the rational longitude $\alpha_{M_2}$ of $M_2$, by Definition \ref{def:generalized-companion}.  Letting $S$ be the surface generating 
\[H_2(M_2,\partial M_2;\Z) \cong \Z,\] with $\partial_*[S] = n\cdot \alpha_{M_2}$ and $n \geq 1$, we deduce that the intersection pairing on $M_2$ then satisfies
\begin{equation} \label{eq:S-lambdaC}
[S] \cdot [\lambda_C] = n,
\end{equation}
as can be seen by pushing $\lambda_C$ slightly into $M_2$ so that it meets $S$ in a single point near each copy of $\alpha_{M_2}$ in $\partial S$.  Applying \eqref{eq:S-lambdaC} and then \eqref{eq:P-lambdaC}, we have
\[ kmn = km\left([S] \cdot [\lambda_C]\right) = [S] \cdot km[\lambda_C] = [S] \cdot [P] = nw, \]
so finally $w = km$.  In particular $w \in \Z$, so the relations \eqref{eq:boundary-mk} and \eqref{eq:P-lambdaC} become
\[ [\Sigma \cap T] = \partial_*[\Sigma \cap M_1] = km [\lambda_C]=w[\lambda_C] \]
in $H_1(T;\Z)$ and \[[P] = w[\lambda_C]\] in $H_1(M_2;\Z)$.  Similarly, the intersection $\Sigma \cap E_P \subset M_2$  gives rise to the relation \[[\lambda_K] = w[\lambda_C]\] in $H_1(E_P;\Z)$.  Finally, the relation \[nw[\mu_K] = [\mu_C]\] is precisely \eqref{eq:rational-longitude-vs-muP} after we identify $\mu_K = \mu_P$ and $\mu_C = \alpha_{M_2}$.
\end{proof}

Our final lemma generalizes a well-known result about  fibered satellite knots in $S^3$. We will not need it in full generality, but  include it  since the proof is short, in case it is useful to others.

\begin{lemma} \label{lem:fibered-satellite}
If $K \subset Y$ is a fibered generalized satellite knot, then its associated pattern  has  positive winding number.
\end{lemma}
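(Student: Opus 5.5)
We argue by contradiction: suppose $K\subset Y$ is fibered but its pattern $P\subset M_2$ has winding number $w=0$. Note that $K$ is nullhomologous, since it is fibered, so Lemma~\ref{lem:winding-number-satellite} applies. With $w=0$ it says that any Seifert surface $\Sigma$ transverse to $T$ satisfies $[\Sigma\cap T]=0$ in $H_1(T;\Z)$. The plan is to take $\Sigma$ to be a fiber of the fibration $E_K\to S^1$ and to show that the satellite torus $T$ can then be isotoped off every fiber. This will be incompatible with $T$ being incompressible and not boundary-parallel.

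The key step uses the standard theory of incompressible surfaces in fibered manifolds. Since $T$ is incompressible in $E_K$ and $E_K$ is irreducible, Thurston's results on surfaces in surface bundles (Thurston's norm paper, or Roussarie) let us isotope $T$ so that either $T$ is a union of leaves, which is impossible since $T$ is closed and the fibers have nonempty boundary, or $T$ is transverse to the fibration. In the transverse case, $T$ inherits a nonsingular foliation by circles $T\cap\Sigma_\theta$, and it fibers over $S^1$ via the restriction of the fibration map $E_K\to S^1$. This forces the class $[T\cap\Sigma]\in H_1(T;\Z)$ to be nonzero: it is the fiber class of a fibration $T\to S^1$ with nonempty circle fibers, and each fiber circle of a torus fibration is essential. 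This contradicts $[\Sigma\cap T]=w[\lambda_C]=0$.

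Alternatively, and more homologically, the restriction of the fibration class $\phi\in H^1(E_K;\Z)$ to $T$ is Poincar\'e dual on $T$ to $[\Sigma\cap T]=w[\lambda_C]$. If $w=0$, then $\phi|_T=0$, so $T$ lifts to the infinite cyclic cover $\Sigma\times\R$ of $E_K$. A $\pi_1$-injective torus would then give a subgroup $\Z^2$ inside $\pi_1(\Sigma)$, which is free because $\Sigma$ has nonempty boundary. This is impossible, and the incompressibility of $T$ (equivalently, $\pi_1$-injectivity, as in \cite[Corollary 3.3]{hatcher-3mfld}) is exactly what is needed. I expect to use this cleaner argument. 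The only care needed is checking that $\phi|_T$ really vanishes when $[\Sigma\cap T]=0$, which follows from the identification of $\phi$ with intersection number with $\Sigma$. The positivity of $w$ is then just $w\ge0$ together with $w\neq0$.
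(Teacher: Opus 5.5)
Your homological argument is correct and is essentially the paper's proof: both conclude that $\pi_1(T)\cong\Z^2$ would lie in $\ker\phi\cong\pi_1(\Sigma)$, which is free, and this is a contradiction. The only difference is how you get $\phi|_T=0$. You read it off directly from $[\Sigma\cap T]=w[\lambda_C]=0$ in Lemma~\ref{lem:winding-number-satellite}. The paper instead uses $[P]=0$ in $H_1(M_2;\Z)$ to build a Seifert surface $F'$ disjoint from $T$, and then invokes $H_2(E_K)=0$ to get $[F]=[F']$. Your route is slightly shorter.
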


\begin{proof}
Let $E_K = Y\setminus \nu(K)$, and let $F$ be a fiber of the fibration $E_K \to S^1$. The homomorphism
\[ \phi: \pi_1(E_K) \twoheadrightarrow \Z \]
given by $\gamma \mapsto [F] \cdot [\gamma]$ defines an infinite cyclic cover of $E_K$ homeomorphic to $F \times \R$.  In particular, $\ker(\phi)$ is a free group of rank $2g(F)$.

If $P$ has winding number $w=0$, then Lemma~\ref{lem:winding-number-satellite} says that $[P] = 0$ in $H_1(M_2;\Z)$, so in fact $P$ has a Seifert surface in $M_2$, whose image in $Y$ is a Seifert surface $F'$ disjoint from the satellite torus $T$.  Then $H_2(Y)=0$ implies $H_2(E_K) = 0$, which in turn implies the injectivity of the map
\[ \partial_*: H_2(E_K, \partial E_K) \to H_1(\partial E_K). \]
This map sends both $[F]$ and $[F']$ to the longitude $[\lambda_K]$, so in fact \[[F] = [F']\in H_2(E_K,\partial E_K).\]
We identify $\pi_1(T) \cong \Z\oplus\Z$ as a subgroup of $\pi_1(E_K)$, since $T$ is incompressible.  Then for any class $\gamma \subset \pi_1(T)$ we have
\[ \phi(\gamma) = [F] \cdot [\gamma] = [F'] \cdot [\gamma] = 0 \]
since $F'$ and $T$ are disjoint, so $\pi_1(T) \subset \ker(\phi)$.  But this means that we have found a $\Z\oplus\Z$ subgroup of a free group, which is impossible, so we must have had $w > 0$ after all.
\end{proof}

\section{Fibered satellite knots and reducible monodromy} \label{sec:reducing-curves}

Let $h: \Sigma \to \Sigma$ be the monodromy of a fibered knot $K$ in a closed, oriented 3-manifold; that is, $\Sigma$ is a compact, connected, oriented surface with one boundary component, and $h$ is a homeomorphism restricting to the identity on $\partial \Sigma$. 
Thurston's classification of surface homeomorphisms \cite{thurston-surfaces} says that $h$ is freely isotopic to a homeomorphism $\phi:\Sigma\to \Sigma$ in \emph{Nielsen--Thurston form}, meaning that:
\begin{itemize}
\item there exists a possibly empty embedded multicurve $\Gamma \subset \Sigma$ which is fixed setwise by $\varphi$;
\item and if $S$ is a component of $\Sigma \setminus \Gamma$ and $n$ is the smallest positive integer such that $\varphi^n(S)=S$, then $\phi^n|_S$ is freely isotopic to either a periodic or a pseudo-Anosov homeomorphism.
\end{itemize}
We will assume that $\Gamma$ is minimal with respect to inclusion, in which case it is unique up to isotopy \cite[Theorem C]{blm}.  Finally, we denote by \[\Sigma_0 \subset \Sigma\] the closure of the component of $\Sigma \setminus \Gamma$ containing  $\partial\Sigma$, and refer to $\Sigma_0$ as the \emph{outermost component}.

We will be most interested in the case that $K$ is a fibered knot in $S^3$; in this case, $K$ is a satellite knot if and only if the reducing system for its monodromy is nonempty \cite{thurston-fiber}.  (In particular, there are no boundary-parallel or nullhomotopic components because $\Gamma$ is assumed minimal.)
Our goal in this section is to clarify the relationship between the monodromy of $K$ and its reducing system on one hand, and the structure of $K$ as a satellite knot on the other. We begin with two technical lemmas.

\begin{lemma} \label{lem:separating-multicurve}
Let $K \subset Y$ be a fibered knot in a rational homology 3-sphere whose monodromy is freely isotopic to a map $\phi: \Sigma \to \Sigma$.  If $c \subset \Sigma$ is a nonempty embedded multicurve that is fixed setwise by $\phi$, then $c$ separates $\Sigma$.
\end{lemma}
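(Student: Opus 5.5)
Argue by contradiction. Suppose $c$ does not separate $\Sigma$; we will show $H_2(Y;\Q)\neq 0$. The key input is that $c$ is invariant under $\phi$, so it sweeps out a closed surface in the mapping torus. Since $\Sigma$ has one boundary component, the exterior $E_K = Y\setminus\nu(K)$ is homeomorphic to the mapping torus $\Sigma\times[0,1]/(x,1)\sim(h(x),0)$. Free isotopy does not change the mapping torus, so we may use $\phi$ in place of $h$.

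\textbf{Step 1: a closed surface in $E_K$.} The set $c\times[0,1]$ descends to the mapping torus $T_\phi$. Because $\phi(c)=c$ setwise, this image is a closed embedded surface $F\subset E_K$. It is a disjoint union of tori or Klein bottles, one for each $\phi$-orbit of components of $c$. Since $E_K$ is orientable and $F$ is transverse to the oriented fibration direction, each component is a two-sided orientable torus. Choose a component $c'$ of $c$ lying in a single $\phi$-orbit and let $F'$ be the corresponding torus.

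\textbf{Step 2: the torus $F'$ is nonzero in homology.} If $c$ is nonseparating, then some sub-union of orbits is nonseparating; indeed, if every orbit-union separated, we would need to check that the whole union separates. The cleanest approach is to find a closed loop $\gamma\subset\Sigma\setminus\partial\Sigma$ meeting the multicurve $c$ algebraically nontrivially. Nonseparation of $c$ means $[c]\neq0$ in $H_1(\Sigma,\partial\Sigma;\F)$ for a suitable orientation or coefficient choice, so such a $\gamma$ exists with $\#(\gamma\cap c)$ odd. Viewing $\gamma$ inside one fiber, we get $[\gamma]\cdot[F]=\gamma\cdot c\neq0$ mod 2. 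This holds provided $F$ is the full preimage of $c$, which is the case because the fiber slice of $F$ is exactly $c$. Hence $[F]\neq0$ in $H_2(E_K;\F_2)$. We then need to upgrade this to a statement over $\Q$.

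\textbf{Step 3: passing to $Y$ and to rational coefficients.} Since $F$ lies in $E_K\subset Y$, the intersection with the closed curve $\gamma\subset Y$ shows that $[F]\neq0$ in $H_2(Y;\F_2)$. To get a rational statement, orient the components of $c$ coherently. A nonseparating multicurve admits an orientation such that some closed curve $\gamma$ meets it with nonzero algebraic intersection number. This follows from taking a component $R$ of $\Sigma\setminus c$ adjacent to both sides of some component, or from a path crossing $c$ once in the dual graph. Then the issue is whether $\phi$ preserves this orientation on the orbit, which is needed for the torus $F$ to be oriented consistently with $c$.

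\textbf{Main obstacle.} The hard step is the orientation issue: $\phi$ may reverse orientations of components of $c$ within an orbit. Rather than fight this, I would work with a transverse loop $\gamma$ in the fiber crossing $c$ exactly once geometrically. Such a loop exists when $c$ is nonseparating: take a path in $\Sigma\setminus c$ joining the two sides of one component $c_0$ of $c$, which exists precisely because $c$ does not separate near $c_0$. Each component of $F$ is an orientable torus, and $\gamma$ meets $F$ transversely in exactly one point. Therefore $[\gamma]\cdot[F_0]=\pm1$ over $\Z$, where $F_0$ is the torus containing $c_0$. This gives $[F_0]\neq0$ in $H_2(Y;\Q)$, contradicting that $Y$ is a rational homology sphere.
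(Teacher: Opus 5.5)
Your closing argument is the same as the paper's. Since $\Sigma\setminus c$ is connected, there is a loop $\gamma\subset\Sigma$ meeting $c$ transversely in a single point of one component $c_0$. The suspension $F_0$ of the $\varphi$-orbit $c_0,\dots,c_{m-1}$ is a closed surface in $Y$ meeting $\gamma$ once, and $[\gamma]\cdot[F_0]=\pm1$ is supposed to contradict $H_2(Y;\Q)=0$.

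The orientation problem you flagged as the main obstacle is real, however, and Step~1 does not resolve it. Its inference is invalid. $F_0$ is a union of flow lines of the suspension flow, i.e.\ the mapping torus of the first-return map $\varphi^m|_{c_0}$. Being transverse to the fibers of an oriented fibration only makes it a circle bundle over $S^1$. If $\varphi^m$ reverses the orientation of $c_0$, then $F_0$ is a Klein bottle; since $\varphi^m$ preserves the orientation of $\Sigma$, this is the same as swapping the two sides of $c_0$. A Klein bottle is necessarily one-sided in the orientable manifold $E_K$. It has no rational fundamental class, and one transverse intersection point only gives $[\gamma]\cdot[F_0]=1\in\Z/2$. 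That shows $H_2(Y;\Z/2)\neq 0$, which is exactly what your Steps~2--3 prove, and it does not contradict $H_2(Y;\Q)=0$.

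No argument can close this gap, because the statement as written is false. Let $Y=\Sigma_3(T_{2,3})$, which has $H_1(Y)\cong(\Z/2)^2$ and so is a rational homology sphere. Let $K=K_3$ be the lifted trefoil. Its monodromy $(t_at_b)^3$ on the one-holed torus acts on $H_1$ by $-I$. It is therefore freely isotopic to the elliptic involution $\iota(v)=-v$ of $\Sigma=(\R^2/\Z^2)\setminus B_r(0)$. The curve $c=\{y=1/2\}$ is nonseparating, and $\iota(c)=c$ with its orientation reversed. Its suspension is a one-sided Klein bottle that meets the loop $\{x=1/2\}$ exactly once.

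The paper's proof has precisely the same gap: it simply asserts that the suspension of the orbit is a torus. What your argument (and the paper's) actually proves is the lemma under either of two extra hypotheses:
\begin{enumerate}
\item $Y$ is a $\Z/2$-homology sphere; run everything mod $2$, where one-sidedness is harmless.
\item The return map $\varphi^m|_{c_0}$ preserves the orientation of some component $c_0$; then $F_0$ is a two-sided torus and your integral count is valid.
\end{enumerate}
These versions appear to suffice for the uses in Lemma~\ref{lem:outermost-separating}. The first use is in $S^3$. In the second, $\gamma_0$ is a boundary curve of the $\varphi^n$-invariant subsurface $\Sigma_0$. The mod $2$ version in $S^3$ shows $\Sigma_0$ lies on only one side of $\gamma_0$, so $\varphi^n$ preserves its orientation.
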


\begin{proof}
Fix a component $c_0$ of $c$.  Then the $\phi$-orbit of $c_0$ is a subset of $c$, and the suspension of this orbit is an embedded torus in the mapping torus $M_\phi \cong Y \setminus \nu(K)$; the image of this torus after filling $K$ back in is a torus $T \subset Y$.  If  $c$ does not separate $\Sigma$, then we can find a path in $\Sigma \setminus c$ from one side of $c_0$ to the other, and hence a closed, embedded curve $\alpha \subset \Sigma$ that meets $c$ transversely in a single point of $c_0$.  Since $\alpha$ avoids the other components of the $\phi$-orbit of $c_0$, its image under the inclusion of a fiber
\[ \Sigma \hookrightarrow Y \setminus \nu(K) \hookrightarrow Y \]
is a closed curve $\alpha' \subset Y$ that meets $T$ transversely in a single point.  But then the intersection form
\[ H_1(Y;\Z) \times H_2(Y;\Z) \to \Z \]
is nonzero since $[\alpha'] \cdot [T] = \pm1$, and this contradicts the assumption that $H_1(Y;\Q) = 0$.
\end{proof}

\begin{lemma} \label{lem:outermost-separating}
Let $K \subset S^3$ be a fibered knot whose monodromy is freely isotopic to a map $\phi: \Sigma \to \Sigma$ in Nielsen--Thurston form, with reducing system $\Gamma$.   Then 
\[ \Gamma \cap \Sigma_0 = \partial\Sigma_0 \setminus \partial\Sigma, \]
and each component of  $\partial\Sigma_0 \setminus \partial\Sigma$ separates $\Sigma$.
\end{lemma}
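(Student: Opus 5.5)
The plan is to deduce both claims from the intersection-number argument of Lemma~\ref{lem:separating-multicurve}, applied to tori built from $\phi$-orbits of reducing curves. If $\Gamma=\emptyset$ then $\Sigma_0=\Sigma$ and there is nothing to prove, so assume $\Gamma\neq\emptyset$. Since $\phi$ is a homeomorphism in Nielsen--Thurston form fixing $\Gamma$ setwise and preserving $\partial\Sigma$, it permutes the components of $\Sigma\setminus\Gamma$ and fixes the unique one containing $\partial\Sigma$. Hence $\phi(\Sigma_0)=\Sigma_0$, and the multicurve $\Gamma\cap\Sigma_0$ is $\phi$-invariant. For a component $c_0$ of $\Gamma\cap\Sigma_0$, let $O(c_0)$ be its $\phi$-orbit. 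As in the proof of Lemma~\ref{lem:separating-multicurve}, the suspension of $O(c_0)$ in the mapping torus gives an embedded torus $T\subset S^3$. I will orient each curve of $\Gamma\cap\Sigma_0$ as part of $\partial\Sigma_0$; because $\phi$ preserves $\Sigma_0$ and the orientation, these orientations are permuted consistently, so $T$ is oriented.

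\textbf{Step 1: $\Gamma\cap\Sigma_0=\partial\Sigma_0\setminus\partial\Sigma$.} The only way this can fail is if some $c_0\in\Gamma$ has the open component $\operatorname{int}\Sigma_0$ on \emph{both} sides. In that case I take a path in $\operatorname{int}\Sigma_0$ from one side of $c_0$ to the other and close it up by a short arc crossing $c_0$ once. The resulting loop $\alpha\subset\Sigma$ meets $\Gamma$ in exactly one point, on $c_0$, so it misses every other curve in $O(c_0)$. Its image in $S^3$ therefore meets $T$ transversely in one point, giving $[\alpha]\cdot[T]=\pm1$. This contradicts $H_2(S^3)=0$, exactly as in Lemma~\ref{lem:separating-multicurve}.

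\textbf{Step 2: each boundary curve separates $\Sigma$.} Consider the bipartite graph whose vertices are $\Sigma_0$ and the closures $R$ of the components of $\Sigma\setminus\operatorname{int}\Sigma_0$, with an edge for each curve of $\partial\Sigma_0\setminus\partial\Sigma$. By Step 1 each such curve is an edge joining $\Sigma_0$ to some $R$. A curve $c_0$ fails to separate $\Sigma$ exactly when its edge lies on a cycle. Since the graph is bipartite with $\Sigma_0$ on one side, such a cycle can be taken to pass through some $R$ via two distinct curves $c_0,c_1\subset\partial R$. The corresponding loop $\alpha$ runs in $\Sigma_0$ and $R$ and crosses $c_0$ outward and $c_1$ inward. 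With the boundary orientation from $\Sigma_0$, these crossings contribute intersection numbers of opposite sign.
\begin{itemize}
\item If $c_0$ and $c_1$ lie in different $\phi$-orbits, then $\alpha$ meets the torus of $O(c_0)$ algebraically $\pm1$ times, again a contradiction.
\end{itemize}

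\textbf{Main obstacle: $c_0$ and $c_1$ in the same orbit.} Here the algebraic count vanishes and the contradiction above disappears. My first attempt uses the permutation structure. If $\phi^k(c_0)=c_1$, then $\phi^k$ maps $R$ to the non-$\Sigma_0$ side of $c_1$, which is $R$ itself. So $R$ is $\phi^k$-invariant, and the orbit of $R$ suspends to a submanifold $N\subset S^3\setminus\nu(K)$ with $\partial N$ a union of tori containing $T$. I would then use that every torus in $S^3$ separates, and pass to the complementary region $X$ of $T$ not containing $K$. The aim is to show that $X\cap\Sigma$ is the union of the $R$-regions in the orbit, with boundary exactly $O(c_0)$. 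A nonseparating $c_0$ would then make the fiber $\Sigma$ restrict on $X$ to a surface whose boundary is homologically inconsistent with $T$ being the boundary of $X$. If this fails, my fallback is a finer intersection count: find a loop inside $X$, or a relative class in $H_2(X,\partial X)$, that pairs nontrivially with $T$. I expect this same-orbit case to require the most care.
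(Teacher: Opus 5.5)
\paragraph{The same-orbit case is left unresolved.} Your Step~1 and the different-orbit case of Step~2 are correct; the latter is exactly the paper's reduction to the situation where the second boundary curve of the cycle lies in the $\phi$-orbit of $\gamma_0$. The same-orbit case, however, is the entire substance of the second claim, and your proposal does not settle it.

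The contradiction you aim for there does not exist as stated. You can show that $X\cap\Sigma$ is the $\phi$-orbit of $R$ with boundary exactly $O(c_0)$: the region $\Sigma_0$ minus $\Gamma$ is connected, misses $T$, and lies on the side of $K$. But that boundary consists of $n$ coherently oriented parallel copies of $\gamma_0$. By Mayer--Vietoris, $H_1(T)\cong H_1(X)\oplus H_1(X')$, so $[\gamma_0]$ generates $\ker(H_1(T)\to H_1(X))$. Hence this boundary class is perfectly consistent with $T=\partial X$. Likewise, a connected surface in $X$ with $k\ge 2$ parallel boundary curves on $T$ is homologically unobjectionable; tubing together parallel copies of a Seifert surface produces one. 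Your fallback of pairing a loop in $X$ against $T$ gives nothing, since such loops push off $T$.

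\paragraph{The paper's missing idea: pass to prime-power branched covers.} Write $c_1=\phi^i(c_0)$ with $1\le i\le n-1$, where $n$ is the orbit length, and choose a prime power $p^e$ dividing $n$ but not $i$. Then:
\begin{itemize}
\item $\Sigma_{p^e}(K)$ is a $\Z/p$-homology sphere, and the lift $K_{p^e}$ is fibered with monodromy $\phi^{p^e}$, which still preserves $\Gamma$.
\item $c_1$ no longer lies in the $\phi^{p^e}$-orbit of $c_0$.
\item So your loop $\alpha$ meets that orbit exactly once, contradicting Lemma~\ref{lem:separating-multicurve} in the cover. This is precisely why that lemma is stated for rational homology spheres.
\end{itemize}

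\paragraph{How your direct route could be rescued.} It needs the fibration, not just boundary homology.
\begin{itemize}
\item $X$ is the mapping torus of $\phi^s|_R$, where $s$ is the period of $R$, so some loop in $X$ meets $R$ algebraically once. This forces $[R]$ to be a generator of $H_2(X,\partial X)\cong\Z$.
\item On the other hand, $\partial[R]=k[\gamma_0]$ with $k=n/s\ge 2$, and $\partial$ is injective since $H_2(X)=0$. So $[R]$ is $\pm k$ times a generator, a contradiction.
\item Equivalently, the fibration restricted to $T$ has connected fibers because $H_1(T)\to H_1(X)$ is onto.
\end{itemize}
As written, though, your proposal lacks either ingredient.
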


\begin{proof}
The map $\phi$ fixes both $\Gamma$ and $\partial\Sigma$ setwise, so it must fix the outermost component $\Sigma_0$.  This means that $\phi$ also fixes $\Gamma \cap \Sigma_0$ setwise.  Given any component $\gamma \subset \Gamma \cap \Sigma_0$ it follows that the entire $\phi$-orbit of $\gamma$ belongs to $\Gamma \cap \Sigma_0$, and then by Lemma~\ref{lem:separating-multicurve} this orbit separates $\Sigma$.  Therefore, at least some components of the orbit must lie on the boundaries of closures of components of $\Sigma \setminus \Gamma$, hence on $\partial \Sigma_0$ in particular.  But the $\phi$-action fixes $\partial \Sigma_0$ setwise as well, so since these components are in the $\phi$-orbit of $\gamma$ we must have $\gamma \subset \partial\Sigma_0$.  This proves that $\Gamma \cap \Sigma_0 \subset \partial\Sigma_0 \setminus \partial\Sigma$, and then by definition every component of $\partial\Sigma_0 \setminus \partial\Sigma$ belongs to $\Gamma$, so in fact $\Gamma \cap \Sigma_0 = \partial\Sigma_0 \setminus \partial\Sigma$.

Supposing now that some component
\[ \gamma_0 \subset \partial\Sigma_0 \setminus \partial \Sigma \subset \Gamma \]
is nonseparating in $\Sigma$, let us denote the curves in its $\phi$-orbit by
\[ \gamma_i = \phi^i(\gamma_0), \qquad i=0,1,\dots,n-1, \]
where each of the $\gamma_i$ are distinct and $n \geq 1$ is the smalllest positive integer satisfying $\phi^n(\gamma_0) = \gamma_0$.  We must actually have $n \geq 2$, because otherwise $\phi(\gamma_0) = \gamma_0$ and then Lemma~\ref{lem:separating-multicurve} would say that $\gamma_0$ must have been separating after all.  Since $\phi$ fixes $\partial\Sigma_0 \setminus \partial\Sigma$ setwise, it follows that each $\gamma_i$ is a component of $\partial\Sigma_0 \setminus \partial\Sigma$ as well.  

Let $\alpha \subset \Sigma$ be a simple closed curve that intersects $\gamma_0$ transversely in a single point $p$.  We orient $\alpha$ so that it travels out of $\Sigma_0$ at $p$; since it eventually returns to $\Sigma_0$ it must meet some other component of $\partial\Sigma_0$ transversely, so we let $q$ be the first point where it does so, with $q$ belonging to some component $\beta \subset \partial\Sigma_0 \setminus \partial\Sigma$.  Then we can replace the portion of $\alpha$ from $q$ to $p$ with a properly embedded arc inside $\Sigma_0$, so that
\[ \alpha \cap \partial \Sigma_0 = \{p\} \sqcup \{q\} \subset \gamma_0 \sqcup \beta. \]
The curve $\beta$ must then lie in the $\phi$-orbit of $\gamma_0$: otherwise $\alpha$ would be a simple closed curve meeting this orbit transversely in a single point, and this would contradict the fact that by Lemma~\ref{lem:separating-multicurve} this orbit must be separating.  Thus we have
\[ \beta = \gamma_i = \phi^i(\gamma_0) \]
for some integer $i$, with $1 \leq i \leq n-1$.

We now consider the prime factorization of $n \geq 2$, writing
\[ n = \prod_{j=1}^k p_j^{e_j} \]
where the $p_j$ are distinct primes and each $e_j$ is a positive integer.  Since $i$ is between $1$ and $n-1$, there is some index $j$ for which $i$ is not a multiple of $p_j^{e_j}$.  Dropping the $j$ subscripts for convenience, so that $p^e$ divides $n$ but does not divide $i$, we will examine the branched cyclic cover
\[ \Sigma_{p^e}(K), \]
in which $K$ lifts to a fibered knot $K_{p^e}$ with monodromy freely isotopic to $\phi^{p^e}$.

According to \cite[\S5]{gordon-aspects} or Proposition~\ref{prop:resultants}, since $p$ is prime we have
\[ H_1(\Sigma_{p^e}(K); \Z/p\Z) = 0, \]
hence in particular $\Sigma_{p^e}(K)$ is a rational homology sphere.  The multicurve $\Gamma \subset \Sigma$ is still a reducing system for the monodromy $\phi^{p^e}$, but the $\phi^{p^e}$-orbit of $\gamma_0 \subset \Gamma$ is now
\[ \{ \gamma_0,\ \gamma_{p^e},\ \gamma_{2\cdot p^e},\ \gamma_{3\cdot p^e},\ \dots,\ \gamma_{n-p^e} \}. \]
The curve $\beta = \gamma_i$ does not belong to this $\phi^{p^e}$-orbit since $i$ is not a multiple of $p^e$, so now the curve $\alpha$ from above meets this orbit transversely in a single point, hence this $\phi^{p^e}$-orbit is non-separating in $\Sigma$.  But since $\Sigma_{p^e}(K)$ is a rational homology sphere, this contradicts Lemma~\ref{lem:separating-multicurve}, and so our original curve $\gamma_0$ must have separated $\Sigma$ after all.
\end{proof}

We can now say quite a bit about the monodromies of fibered satellite knots in $S^3$.  Indeed, let us suppose for the rest of this section that $K\subset S^3$ is a fibered knot whose monodromy $h:\Sigma\to \Sigma$ is freely isotopic to a map $\phi:\Sigma\to\Sigma$ in Nielsen--Thurston form, with nonempty reducing set $\Gamma$. 

By Lemma \ref{lem:outermost-separating}, we can write \begin{equation*}\label{eqn:notation-sat} \partial\Sigma_0 = \partial\Sigma \sqcup (\Gamma \cap \Sigma_0) = \partial\Sigma \sqcup c^1 \sqcup \dots \sqcup c^{k}, \end{equation*}
where each $c^i$ is a disjoint union of  separating curves on $\Sigma$, \[c^i = c^i_0\sqcup \dots \sqcup c^i_{m_i-1}\] with $m_i\geq 1$, satisfying \[\varphi(c^i_j) = c^i_{j+1},\] with subscripts understood mod $m_i$. That is, each $c^i$ represents a single $\varphi$-orbit of  the components of $\partial \Sigma_0\setminus \partial \Sigma$. We can then write \[ \overline{\Sigma \setminus \Sigma_0} =  S^1 \sqcup \dots \sqcup S^{k}, \] where each $S^i$ is a disjoint union of connected subsurfaces of $\Sigma$, \[S^i = S^i_0\sqcup \dots \sqcup S^i_{m_i-1}, \] whose boundaries are given by $\partial S^i_j = c^i_j$ and for which \[\varphi(S^i_j) = S^i_{j+1}.\] See Figure~\ref{fig:nielsen-thurston} for an illustration of this decomposition in an example where $k=2$; note that each $S^i_j$ has genus at least one by the minimality of $\Gamma$.
\begin{figure}
\begin{tikzpicture}
\draw (-2,0) arc (180:360:2 and 0.2) node[right] {$\partial\Sigma$};
\draw[thin, gray!50] (-2,0) arc (180:0:2 and 0.2);
\draw (-2,0) to[out=90,in=270] ++(-1.25,2);
\draw (2,0) to[out=90,in=270] ++(1.25,2);
\foreach \i in {-1.5,0,1.5} {
  \begin{scope}
  \draw (\i,0.85) coordinate (gcenter) ++(0,0.5) ++(225:0.5) arc (225:315:0.5);
  \clip (gcenter) ++(0,0.5) -- ++(225:0.5) arc (225:315:0.5) -- ++(135:0.5);
  \draw (gcenter) ++(0,-0.45) ++(45:0.6) arc (45:135:0.6);
  \end{scope}
}
\foreach \i in {-2.75,-1.5,0.25,1.5,2.75} {
  \begin{scope}
  \path (\i,2) ++(-0.5,0) coordinate (leftend);
  \clip (leftend) ++ (0,-0.3) rectangle ++(1,0.6);
  \draw[ultra thick] (leftend) arc (180:360:0.5 and 0.1);
  \draw[gray!50] (leftend) arc (180:0:0.5 and 0.1);
  \end{scope}
}
\foreach \i in {-2.75,0.25,1.5} \draw (\i,2) ++(0.5,0) to[out=270,in=270,looseness=3] ++(0.25,0);
\draw (-1.5,2) ++(0.5,0) to[out=270,in=270] ++(0.75,0);
\foreach \i in {-2.75,-1.5} {
 \draw (\i,2) ++(-0.5,0) to[out=90,in=90,looseness=5] ++(1,0);
}
\foreach \i in {0.25,1.5,2.75} {
 \draw (\i,2) ++(-0.5,0) to[out=90,in=90,looseness=7] ++(1,0);
}
\foreach \i/\j in {-2.75/2.5,-1.5/2.5, 0.25/2.5, 1.5/2.5, 2.75/2.5, 0.25/3, 1.5/3, 2.75/3} {
  \begin{scope}
  \draw (\i,\j) coordinate (gcenter) ++(0,0.3) ++(225:0.3) arc (225:315:0.3);
  \clip (gcenter) ++(0,0.3) -- ++(225:0.3) arc (225:315:0.3) -- ++(135:0.3);
  \draw (gcenter) ++(0,-0.27) ++(45:0.36) arc (45:135:0.36);
  \end{scope}
}
\coordinate (leftcenter) at (-2.125, 3.5);
\draw[-latex] (leftcenter) ++(0.35,0) to[bend right=30] ++(-0.7,0);
\draw[-latex] (leftcenter) ++(-0.25,-0.2) to[bend right=30] ++(0.5,0);
\coordinate (rightcenter1) at (2.125,4);
\draw[-latex] (2.125,4) ++(0.35,0) to[bend right=25] ++(-1.95,0);
\draw[-latex] (0.875,4) ++(-0.25,-0.2) to[bend right=20] ++(0.5,-0.1);
\draw[-latex] (2.125,4) ++(-0.25,-0.3) to[bend right=20] ++(0.5,0.1);
\draw[decorate,decoration={brace,amplitude=5pt,raise=0.5pt,mirror}] (3.5,0) -- node[midway,right,outer sep=4pt] {$\Sigma_0$} ++(0,2);
\draw[decorate,decoration={brace,amplitude=3pt,raise=0.5pt}] (-3.35,1.8) -- node[midway,left,outer sep=2pt] {$\Gamma$} ++(0,0.4);
\draw (-3.25,0.75) coordinate (arrowcenter) ++ (0:0.35) arc (360:60:0.35);
\draw[-latex] (arrowcenter) ++(60:0.35) -- ++(-15:0.05);
\node[below left, inner sep = 0.25cm] at (arrowcenter) {$\phi_0$};
\draw[decorate,decoration={brace,amplitude=5pt,raise=0.5pt}] (-3.25,3.75) -- node[midway,above,outer sep=5pt] {$S^1$} ++(2.25,0);
\draw[decorate,decoration={brace,amplitude=5pt,raise=0.5pt}] (-0.25,4.35) -- node[midway,above,outer sep=5pt] {$S^2$} ++(3.5,0);
\end{tikzpicture}
\caption{An example of a Nielsen--Thurston decomposition.}\label{fig:nielsen-thurston}
\end{figure}
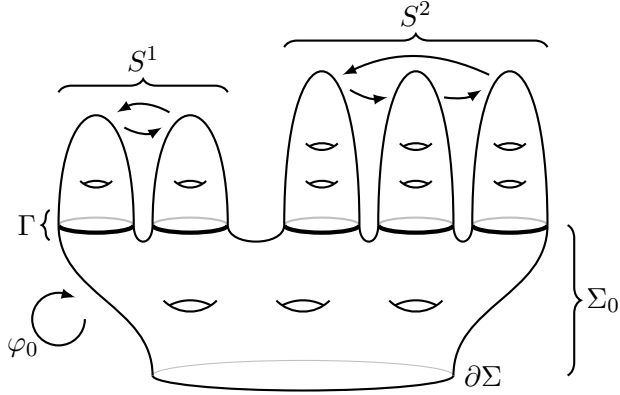

Now, the suspension of each $c^i$ is a torus $T^i$ in the mapping torus $M_\varphi \cong S^3\setminus \nu(K)$. This torus necessarily bounds a solid torus in $S^3$ and thus realizes $K$ as a satellite knot: the suspension of $S^i$ gives the complement $S^3 \setminus \nu(C^i)$ of the associated companion knot $C^i \subset S^3$, and the rest of the mapping torus corresponds to the complement of the pattern $P^i
$ in the solid torus. This means that the companion  knot $C^i$ is fibered, with monodromy which is freely isotopic to the map
\[ \varphi^{m_i}|_{S^i_0} : S^i_0 \to S^i_0. \] Note that this restriction map is in Nielsen--Thurston form, with reducing set given by $\Gamma\cap \textrm{int} \,S^i_0$. 

We will use the notation introduced above throughout this paper. One quick corollary, which we will not use, but record here in case it is helpful to others, is that every component of the reducing set of a fibered knot in $S^3$ is separating:

\begin{lemma} \label{lem:reducing-separating}
Let $K \subset S^3$ be a fibered knot whose monodromy is freely isotopic to a map $\phi: \Sigma \to \Sigma$ in Nielsen--Thurston form, with reducing system $\Gamma$.  Then each component of $\Gamma$ separates $\Sigma$.
\end{lemma}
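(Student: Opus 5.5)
The natural approach is induction on the genus of the fiber (or on the number of components of $\Gamma$), using Lemma~\ref{lem:outermost-separating} together with the satellite structure just described. If $\Gamma$ is empty there is nothing to prove. Otherwise, Lemma~\ref{lem:outermost-separating} already tells us that every component of $\Gamma\cap\Sigma_0 = c^1\sqcup\dots\sqcup c^k$ separates $\Sigma$. So it remains to treat the components of $\Gamma$ lying in the interiors of the subsurfaces $S^i_j$.

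Each of these components lies in the $\varphi$-orbit of a component of $\Gamma\cap\operatorname{int} S^i_0$, since $\varphi$ permutes the $S^i_j$ cyclically and preserves $\Gamma$. Because separation is preserved by homeomorphisms of $\Sigma$, it suffices to show that every component of $\Gamma\cap\operatorname{int} S^i_0$ separates $\Sigma$.

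As noted above, the companion $C^i\subset S^3$ is a fibered knot whose fiber is $S^i_0$ (a surface with one boundary component $c^i_0$). Its monodromy is freely isotopic to $\varphi^{m_i}|_{S^i_0}$, which is in Nielsen--Thurston form with reducing set $\Gamma\cap\operatorname{int}S^i_0$.

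The one point to check is that this reducing set is minimal for the companion's monodromy, so that the inductive hypothesis applies. If a proper subcollection of $\Gamma\cap\operatorname{int} S^i_0$ still gave a Nielsen--Thurston decomposition of $\varphi^{m_i}|_{S^i_0}$, we could transport it by $\varphi$ to every $S^i_j$ and obtain a proper subcollection of $\Gamma$ that is still a reducing system for $\varphi$. That would contradict the minimality of $\Gamma$. Alternatively, one can simply cite the uniqueness of the canonical reduction system \cite{blm}.

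Since $g(S^i_0)<g(\Sigma)$, the inductive hypothesis applied to $C^i$ shows that every component $\gamma$ of $\Gamma\cap\operatorname{int}S^i_0$ separates $S^i_0$. Finally, a curve that separates the subsurface $S^i_0$, where $S^i_0$ has connected boundary and its complement $\overline{\Sigma\setminus S^i_0}$ is connected (because $c^i_0$ separates $\Sigma$), also separates $\Sigma$. Indeed, the complement glues onto whichever side of $\gamma$ contains $\partial S^i_0$, and the other side remains a separate component. This completes the induction.

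The main obstacle is the minimality and transfer step. An alternative that avoids minimality is to apply Lemma~\ref{lem:separating-multicurve} directly to the branched cover $\Sigma_{p^e}(K)$, as in the proof of Lemma~\ref{lem:outermost-separating}. However, the induction through companions is cleaner.
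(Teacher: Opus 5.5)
Your argument is correct and follows the paper's proof. You handle the curves $c^i_j$ by Lemma~\ref{lem:outermost-separating}, move an interior curve into $S^i_0$ via $\varphi$, apply the inductive hypothesis to the companion $C^i$ with reducing set $\Gamma\cap\operatorname{int}S^i_0$, and observe that separating $S^i_0$ implies separating $\Sigma$. The differences are minor: the paper inducts on the number of components of $\Sigma\setminus\Gamma$ (inducting on the number of components of $\Gamma$ works just as directly, whereas $g(S^i_0)<g(\Sigma)$ needs a word about $c^i_0$ not being boundary-parallel), and the paper takes the minimality of $\Gamma\cap\operatorname{int}S^i_0$ for granted, which your transfer argument supplies.
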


\begin{proof}
We will induct on the number $n \geq 1$ of components of $\Sigma \setminus \Gamma$.  We know by Lemma~\ref{lem:separating-multicurve} that $\Gamma$ separates $\Sigma$ if it is nonempty.  If $n=1$ then $\Sigma \setminus \Gamma$ is connected, so $\Gamma$ is empty and the proposition is vacuously true.  We will thus assume that $n \geq 2$, and adopt the notation introduced above.

A component  $\gamma\subset\Gamma$ is either $c^i_j$ for some $i,j$, or lies in the interior of some $S^i_j$.  We will assume the latter, since otherwise we already know that $\gamma$ is separating, and we may suppose without loss of generality that $\gamma\subset \textrm{int} \,S^i_0$. 
Then $\gamma$ is a component of $\Gamma\cap \textrm{int} \,S^i_0$, which is the reducing set for the monodromy of the corresponding companion knot $C^i\subset S^3$, as noted above. Now, the complement
\[ S^i_0 \setminus (\Gamma \cap S^i_0) \]
has strictly fewer than $n$ components, because these are a subset of the $n$ components of $\Sigma \setminus \Gamma$ but  do not include the component containing $\partial \Sigma$.  Thus by hypothesis $\gamma$ is separating in $S^i_0$, and therefore separates $\Sigma$ as well.  This completes the proof by induction.
\end{proof}

More importantly, we may now describe the meridians and Seifert longitudes of the companion knots $C^i$ on the corresponding satellite tori $T^i$ purely in terms of the restriction of $\varphi$ to the outermost component $\Sigma_0$ and the meridian of $K$. 
Let $\varphi_0$ denote the restriction of $\varphi$ to $\Sigma_0$. By definition, the mapping torus $M_{\varphi_0}$ has boundary \[\partial M_{\varphi_0}  = T \sqcup T^1 \sqcup \dots \sqcup T^k,\] where $T$ is the suspension of $\partial \Sigma$ in this mapping torus, and each $T^i$ is the suspension of \[c^i=c^i_0\sqcup\dots\sqcup c^i_{m_i-1} \subset \partial \Sigma_0\setminus \partial \Sigma.\] The longitude $\lambda_K$ is  the image of $\partial \Sigma$ in $T$, and the meridian $\mu_K$ is a curve on $T$ dual to $\lambda_K$.

\begin{lemma}\label{lem:sat-longitude}
The Seifert longitude $\lambda_{C^i}$ of the companion knot $C^i$ is given by the image of  $c^i_j$ in $T^i$ for any $j=0,\dots,m_i-1$. The pattern $P^i$ has winding number $m_i$.
\end{lemma}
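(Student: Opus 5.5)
The plan is to read everything off the fibration of the mapping torus $M_\varphi \cong S^3\setminus\nu(K)$ restricted to the pieces cut out by the torus $T^i$. The suspension of $S^i$ is $M_{\varphi^{m_i}|_{S^i_0}}$. I will identify it with the companion exterior $S^3\setminus\nu(C^i)$, and its boundary $T^i$ with the suspension of $c^i$. Each fiber of this sub-mapping torus meets $T^i$ in $c^i_0\sqcup\cdots\sqcup c^i_{m_i-1}$. In the suspension, the curves $c^i_j$ are all isotopic in $T^i$, since flowing along the suspension direction carries $c^i_j$ to $c^i_{j+1}$. So they all give the same class $\gamma\in H_1(T^i)$. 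The surface $S^i_0$ is embedded in $S^3\setminus\nu(C^i)$ with boundary the single curve $c^i_0$, and it is a fiber of the fibration of the companion exterior (the monodromy being $\varphi^{m_i}|_{S^i_0}$). A fiber of a fibered knot exterior is a Seifert surface, so $\partial S^i_0=c^i_0$ is the Seifert longitude $\lambda_{C^i}$. By the isotopy above, the same holds for every $c^i_j$. This gives the first claim.

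For the winding number, I will apply Lemma~\ref{lem:winding-number-satellite} to $K\subset S^3$ with satellite torus $T^i$, $M_1=S^3\setminus\nu(C^i)$ and $M_2$ the solid torus. Here $M_2$ is a solid torus, so $n=1$, $\alpha_{M_2}$ is the meridian $\mu_{C^i}$, and the lemma applies since $K$ is nullhomologous. Take the Seifert surface $\Sigma$ itself, which is a fiber of $M_\varphi$ and meets $T^i$ transversely in $c^i=\bigsqcup_j c^i_j$. Then
\[ w[\lambda_{C^i}] = [\Sigma\cap T^i] = \sum_{j=0}^{m_i-1}[c^i_j] = \pm m_i[\lambda_{C^i}], \]
so $w=m_i$ as long as all the $c^i_j$ carry the same orientation as boundary of $\Sigma\cap M_1$, and $\lambda_{C^i}$ is nonzero in $H_1(T^i)$ (it is primitive).

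The main obstacle is this orientation consistency. I need that the $c^i_j$ do not cancel in pairs. I will argue it directly. Each $c^i_j$ is oriented as the boundary $\partial S^i_j$. The map $\varphi$ is orientation-preserving on $\Sigma$ and carries $S^i_j$ to $S^i_{j+1}$, so it carries the boundary orientation of $c^i_j$ to that of $c^i_{j+1}$. The suspension flow realizes this as an isotopy in $T^i$, so all the classes $[\partial S^i_j]$ agree and the sum is $m_i[\lambda_{C^i}]$.

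Alternatively, I could count intersections with a meridian disk $D$ of $M_2$ bounded by $\mu_{C^i}$. The pattern $P^i=K$ is transverse to the fibers and meets $D$ positively, in as many points as $\mu_{C^i}$ meets a fiber $\Sigma\cap T^i$. Each $c^i_j$ meets $\mu_{C^i}$ once with the same sign, giving $w=m_i$ as well.
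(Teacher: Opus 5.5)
Your argument is correct and is essentially the paper's proof. Like the paper, you orient each $c^i_j$ as $\partial S^i_j$, note that these are oriented-isotopic primitive curves in $T^i$ via the suspension of $\varphi$, identify them with $\lambda_{C^i}$ because $S^i_0$ is a fiber for $C^i$, and apply Lemma~\ref{lem:winding-number-satellite} to $[\Sigma\cap T^i]=m_i[c^i_j]$. Your closing ``alternative'' count is unnecessary and, as phrased, slightly off, since $K$ is the binding of the open book rather than a curve transverse to the pages.
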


\begin{proof}
Orient  $c^i_j$ as the boundary of $S^i_j$, and observe  that the images of  $c^i_0, \dots, c^i_{m_i-1}$ are oriented isotopic primitive curves in $T^i$. Since each $S^i_j$ is a fiber of the fibered knot $C^i$, the Seifert longitude $\lambda_{C^i}$ is given by the image of  $c^i_j$ in $T^i$ for any $j=0,\dots,m_i-1$. Moreover,  the fiber $\Sigma$ in the mapping torus  \[M_\varphi \cong S^3 \setminus \nu(K)\] intersects $T^i$ transversely in the oriented multicurve  $c^i_0\sqcup\dots\sqcup c^i_{m_i-1}$. In particular, \[[\Sigma\cap T^i] = m_i[c^i_j]\in H_1(T^i;\mathbb{Z}), \] for each $j$. It then follows from  Lemma \ref{lem:winding-number-satellite} that $P^i$ has winding number $m_i$.
\end{proof}

\begin{proposition}\label{prop:sat-meridian}
The meridian $\mu_{C^i}$ of the companion knot $C^i$ is the unique primitive isotopy class of curves in $T^i$ satisfying  \[[\mu_{C^i}] = m_i[\mu_K]\] as elements of $H_1(M_{\varphi_0};\mathbb{Z})$.
\end{proposition}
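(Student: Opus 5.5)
We have to prove two things. First, that $\mu_{C^i}$ satisfies $[\mu_{C^i}] = m_i[\mu_K]$ in $H_1(M_{\varphi_0};\Z)$. Second, that no other primitive class on $T^i$ satisfies this relation.

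\textbf{Existence.} The space $M_{\varphi_0}$ is the exterior of the link $K\cup C^1\cup\dots\cup C^k$ in $S^3$. More precisely, it is $S^3\setminus\nu(K)$ with the companion exteriors $S^3\setminus\nu(C^i)$, i.e.\ the suspensions of the $S^i$, removed. So $M_{\varphi_0}$ is the complement of the patterns inside the nested solid tori, and it sits inside $E_{P^i}=V^i\setminus\nu(P^i)$, where $V^i$ is the solid torus bounded by $T^i$ on the pattern side. We apply Remark~\ref{rmk:relation-S} to $P^i\subset V^i$, which is a solid torus, so $n=1$; the winding number is $m_i$ by Lemma~\ref{lem:sat-longitude}. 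Take a meridian disk $D$ of $V^i$, transverse to the patterns. This gives $[\mu_{C^i}]=m_i[\mu_K]$ in $H_1(E_{P^i})$. To get the relation in $M_{\varphi_0}$ itself, we must also remove the other companion solid tori $V^{i'}$, $i'\ne i$, which lie inside $V^i\setminus\nu(P^i)$.

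Here we use that each $C^{i'}$ has fibered exterior with Seifert fiber $S^{i'}_0$. The disk $D$ meets $V^{i'}$ in meridian disks of $V^{i'}$, with algebraic count $0$ or $\pm$(something). Each such meridian disk can be replaced by $\mu_{C^{i'}}$, whose class in $H_1(M_{\varphi_0})$ we treat by the same argument. A cleaner route is to use linking numbers. $H_1(M_{\varphi_0};\Z)$ is free abelian, generated by the meridians $\mu_K,\mu_{C^1},\dots,\mu_{C^k}$ of this link in $S^3$, and a class is determined by its linking numbers with $K,C^1,\dots,C^k$. The curve $\mu_{C^i}$ has linking number $1$ with $C^i$ and $0$ with every other component, since $C^{i'}$ and $K$ can be isotoped off the small meridian disk of $C^i$. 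Consequently, in the meridian basis it is just the generator $\mu_{C^i}$.

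So the relation to prove must be read correctly. From Lemma~\ref{lem:winding-number-satellite}, $\mathrm{lk}(\mu_{C^i},K)=0$, while $\mathrm{lk}(\mu_K, C^i)=0$. Hence $\mu_{C^i}$ and $m_i\mu_K$ are \emph{not} literally equal in the link-complement homology. This forces a reinterpretation: $M_{\varphi_0}$ is presumably the mapping torus as an abstract manifold, and the intended relation should come from the fibration. Using the Wang sequence, $H_1(M_{\varphi_0})\cong \Z\oplus \mathrm{coker}(\varphi_{0*}-1)$. In $\Sigma_0$ the curves $c^i_j$ are homologous to one another modulo $\partial\Sigma$ and the other orbits. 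So the argument should be carried out with the concrete description of $\mu_{C^i}$ as a curve on $T^i$ dual to $c^i_0$: a section-type curve that closes up after $m_i$ turns around the circle direction. I would represent it as the suspension of a point $x\in c^i_0$ traveling under $\varphi_0$ through $c^i_1,\dots,c^i_{m_i-1}$ back to $c^i_0$, corrected by an arc within $c^i_0$.

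Compare this with $\mu_K$, the suspension of a point on $\partial\Sigma$, where $\varphi_0$ is the identity. Choose arcs in $\Sigma_0$ from $\partial\Sigma$ to $x$ and to its images. The $m_i$-fold suspension of $x$ and $m_i$ copies of the suspension of the boundary point then differ by the boundary of a 2-chain in $M_{\varphi_0}$, namely the trace of the arc, plus a combination of classes $[\varphi_0^j(a)-a]$ that telescope to $\varphi_0^{m_i}(a)-a$. That remaining term is a closed curve in the fiber and hence zero in the Wang cokernel. The correction along $c^i_0$ is the ambiguity up to multiples of $\lambda_{C^i}$, which is what the uniqueness step fixes.

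\textbf{Uniqueness.} Primitive classes on $T^i$ dual to $\lambda_{C^i}$ differ by multiples of $[\lambda_{C^i}]=[c^i_0]$. So it suffices to show that $[c^i_0]$ has infinite order in $H_1(M_{\varphi_0};\Z)$. Equivalently, $c^i_0$ is nonzero rationally in $\mathrm{coker}(\varphi_{0*}-1)$ on $H_1(\Sigma_0)$. Here I would use that $c^i_0$ separates $\Sigma$ (Lemma~\ref{lem:outermost-separating}), together with the fact that the orbit $c^i$ alone does not bound in $\Sigma_0$. I would detect it by the intersection pairing with a relative arc class in $\Sigma_0$ from $c^i_0$ to $\partial\Sigma$, which is $\varphi_0$-invariant up to boundary terms.

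The main obstacle is getting this homology bookkeeping in the mapping torus right, both the telescoping chain and the non-torsion of $[c^i_0]$; that is the step I expect to be hardest.
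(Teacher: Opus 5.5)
\textbf{There is a genuine gap, and it starts with a misidentification.} You correctly note that $M_{\varphi_0}$ is obtained by deleting the companion exteriors $E_{C^j}$. That does not make it the exterior of $K\cup C^1\cup\dots\cup C^k$.
\begin{itemize}
\item Its complement in $S^3$ is $\nu(K)\cup E_{C^1}\cup\dots\cup E_{C^k}$, not a union of solid tori.
\item For $j\neq i$, the piece sitting inside $V^i\setminus\nu(P^i)$ is $E_{C^j}$, not $V^j$ (which contains $K$).
\item The $\mu_{C^j}$ are therefore not part of a basis of $H_1(M_{\varphi_0})$.
\item In any case $\mathrm{lk}(\mu_{C^i},K)=D\cdot K=m_i$, not $0$.
\end{itemize}
So your conclusion that the relation cannot hold literally is wrong; the proposition holds as stated.

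What is actually true is this. Gluing $E_{C^j}$ onto $T^j$ kills exactly $[\lambda_{C^j}]$. Hence the meridian disk $D$ only gives $[\mu_{C^i}]=m_i[\mu_K]$ modulo $\langle[\lambda_{C^j}] : j\neq i\rangle$. In fact $H_1(M_{\varphi_0};\Z)$ is free on $[\mu_K],[\lambda_{C^1}],\dots,[\lambda_{C^k}]$. The entire content of the existence statement is that these error coefficients vanish.

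\textbf{The Wang-sequence route does not supply this.} A closed curve in the fiber is not zero in $\mathrm{coker}(\varphi_{0*}-1)$. Your own uniqueness step needs $[c^i_0]\neq 0$ there, and this cokernel is in fact generated by the orbit classes $[c^j_0]=[\lambda_{C^j}]$. Re-choosing the arc $b\subset c^i_0$ only changes the $[\lambda_{C^i}]$-coefficient, not the others.

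Abstract mapping-torus bookkeeping cannot force the other coefficients to vanish. Take $\partial\Sigma_0=\partial\Sigma\sqcup c^1_0\sqcup c^2_0$ a pair of pants and $h_0=t_\partial^r$. Your telescoping shows that the suspension $s$ of a point of $c^1_0$ satisfies $[s]-[\mu_K]=\pm r[\partial\Sigma]=\pm r([c^1_0]+[c^2_0])$. So for $r\neq 0$, no curve on $T^1$ satisfies the relation.

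The input that rules this out in $S^3$ is that $\mu_{C^i}$ bounds a disk in the solid torus $V^i$. Your construction never uses this. It also never ties the curve you build to the actual meridian: uniqueness only says at most one class works, not that $\mu_{C^i}$ is that class.

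\textbf{The missing step is the one you began and abandoned.} $D$ meets $E_{C^j}$ not in meridian disks but in a planar surface $D''$ with $[\partial D'']=k[\lambda_{C^j}]\in H_1(T^j)$, and one must show $k=0$. The paper does this as follows:
\begin{itemize}
\item remove inessential circles and compress $D''$ to incompressible planar surfaces with longitudinal boundary;
\item these cannot be fibers of the positive-genus fibered knot $C^j$;
\item so by \cite[Proposition 4.7]{bs-apoly} they separate $E_{C^j}$ and hence have nullhomologous boundary.
\end{itemize}

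\textbf{Uniqueness.} The paper's argument is also simpler and more complete than yours. Glue back the $E_{C^j}$ for $j\neq i$ and fill along $\mu_K$. Any primitive curve satisfying the relation then becomes nullhomologous on $\partial V^i$, hence is the meridian. This covers all primitive classes, not only those dual to $\lambda_{C^i}$.
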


\begin{proof} 
It suffices without loss of generality to prove this in the case that $i=1$. Note that \[S^3\setminus \nu(K)=M_{\varphi_0}\cup E_{C^1}\cup E_{C^2}\cup \dots \cup E_{C^k} \] where each knot complement $E_{C^j}=S^3\setminus \nu(C^j)$ is glued onto $M_{\varphi_0}$ along the torus $T^j$. In particular, if we omit the complement of $C^1$ from this decomposition, then the resulting manifold \[E_{P^1}=M_{\varphi_0} \cup E_{C^2}\cup \dots \cup E_{C^k}\] is the complement of the pattern $P^1$ in the corresponding solid torus $V^1\subset S^3$, with boundary \[\partial E_{P^1} = T \sqcup T^1.\] By Definition \ref{def:generalized-companion}, the meridian $\mu_{C^1}\subset T^1$ of the companion knot $C^1$ is given by the longitude $\alpha_{V^1}$ of the solid torus \[V^1\cong S^1\times D^2.\] Note that $\alpha_{V^1}$ is simply the boundary $\partial D$ of the meridional disk \[D = \{\pt\}\times D^2.\] It therefore suffices to show that \begin{equation}\label{eq:relation-D}[\partial D] = m_1[\mu_K]\in H_1(M_{\varphi_0};\Z),\end{equation} and that $\partial D$ is the unique primitive isotopy class of curves in $T^1$ satisfying this relation.

Let us prove the uniqueness first. Suppose for a contradiction that $\gamma_1$ and $\gamma_2$ are two distinct primitive isotopy classes of curves in $T^1$ satisfying \[[\gamma_1]=[\gamma_2] = m_1[\mu_K]\in H_1(M_{\varphi_0};\Z).\] Since the solid torus $V^1$ is obtained from $M_{\varphi_0}$ by attaching the complements $E_{C^j}$ for $j=2,\dots,k$, and then Dehn filling $T$ along $\mu_K$, we therefore have that  \[[\gamma_1]=[\gamma_2] = 0 \in H_1(V^1;\Z).\] But then half lives, half dies implies that the primitive curves $\gamma_1$ and $\gamma_2$ are homologous and hence isotopic in $T^1 = \partial V^1$, a contradiction.

It remains to prove the relation in \eqref{eq:relation-D}. As discussed in \S\ref{sec:generalized-satellite}, we know that this relation holds in $H_1(E_{P^1};\Z)$. Indeed, let us assume that $D$ is transverse to $P^1\subset V^1$, and let \[D' = D\cap E_{P^1}.\] Then $D'$ is a planar surface in $E_{P^1}$ whose boundary is the union of $\partial D\subset T^1$ with a multicurve in $T$ homologous to $m_i$ copies of $\mu_K$. To prove that the relation remains true after removing  $E_{C^2},\dots,E_{C^k}$ from $E_{P^1}$ to form $M_{\varphi_0}$ it suffices to show that $D'$ intersects each of the tori $T^2,\dots,T^k\subset E_{P^1}$ in a nullhomologous curve (after perturbing $D'$ so that it is transverse to these tori). Without loss of generality, it is enough to show that \[[D'\cap T^2] = 0\in H_1(T^2;\Z).\] Note that \[[D'\cap T^2] = 0 \in H_1(E_{C^2};\Z)\] since the intersection $D'\cap T^2$ bounds the planar surface \[D''=D'\cap E_{C^2} \subset E_{C^2}.\] As the kernel of the inclusion-induced map \[H_1(T^2;\Z)\to H_1(E_{C^2};\Z)\] is generated by the class of the Seifert longitude $\lambda_{C^2}$, it follows that \[[D'\cap T^2] = [\partial D'']=k[\lambda_{C^2}]\in H_1(T^2;\Z)\] for some integer $k$, which we want to show is zero. 

After a standard innermost circle argument to remove nullhomotopic components of the intersection $D'\cap T^2$, and then compressing $D''$ along any compressing disks that might exist, we can therefore write \[D'' = D''_1\sqcup \dots \sqcup D''_m\] where each component $D''_i$ is a connected, oriented, incompressible planar surface in $E_{C^2}$ whose boundary is a union of curves on $T^2$, each isotopic as an unoriented curve to $\lambda_{C^2}$. Since  each $D''_i$ is planar, it cannot be isotopic to a fiber of the positive-genus fibered knot $C^2$. Then \cite[Proposition 4.7]{bs-apoly} says that $D''_i$ is separating. In particular, $D''_i$ together with some portion of $T^2$ bounds a component of $E_{C^2}\setminus D''_i$ and therefore represents the trivial class \[[D''_i]=0\in H_2(E_{C^2},T^2).\] This class is  sent to \[[\partial D''_i] = 0 \in H_1(T^2;\Z)\] under the map in the long exact sequence associated with the pair. It follows that \[[D'\cap T^2] = [\partial D''] = [\partial D''_1] + \dots + [\partial D''_m] = 0\in H_1(T^2;\Z),\] as desired, completing the proof of this proposition.
\end{proof}

This proposition then leads to the following, which is a key input in our proof of Theorem \ref{thm:hfk-j-finite} and subsequently Theorem \ref{thm:main} in the last section.

\begin{theorem}\label{thm:determine-C}
Suppose that $\varphi_0$ is freely isotopic to a pseudo-Anosov map $\psi_0:\Sigma_0\to\Sigma_0$. Then the knots $C^1,\dots,C^k$ together with the conjugacy class of $\psi_0$ determine the knot $K\subset S^3$ up to finitely many possibilities.
\end{theorem}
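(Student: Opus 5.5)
Our plan is to reconstruct the knot complement $S^3\setminus\nu(K)$ from the given data, up to finitely many choices, and then use the Gordon--Luecke theorem to recover $K$ from its complement.

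First, the conjugacy class of $\psi_0$ determines the mapping torus $M_{\psi_0}\cong M_{\varphi_0}$ up to homeomorphism. Its boundary tori are $T, T^1,\dots,T^k$. The mapping class $\psi_0$ alone does not record which boundary component of $\Sigma_0$ is $\partial\Sigma$, nor how the remaining components are grouped into the orbits $c^i$. However, $\Sigma_0$ has only finitely many boundary components, so there are only finitely many such choices, and we fix one. On $T$ we record $\lambda_K$, the image of $\partial\Sigma$. On each $T^i$ we record the image of $c^i_j$, which by Lemma~\ref{lem:sat-longitude} is $\lambda_{C^i}$. The integer $m_i$ is simply the number of components in the orbit $c^i$.

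Second, we pin down the meridians. The meridian $\mu_K$ on $T$ is a curve dual to $\lambda_K$, so a priori it is determined only up to Dehn twists along $\lambda_K$. In $M_{\varphi_0}$ it is the suspension direction: the flow line through a point of $\partial\Sigma$. Since $\varphi_0$ is only freely isotopic to $\psi_0$ rather than fixing $\partial\Sigma$ pointwise, this gives an identification up to a twist. We pin it down by requiring that filling $T$ along $\mu_K$ and gluing in the companion complements yields a manifold with the homology of $S^3$. Concretely, we fix a homological framing on $T$, and the fractional Dehn twist coefficient then leaves $\mu_K$ determined up to the ambiguity we must control.

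Given $\mu_K$, Proposition~\ref{prop:sat-meridian} determines each $\mu_{C^i}\subset T^i$ uniquely as the primitive class with $[\mu_{C^i}]=m_i[\mu_K]$ in $H_1(M_{\varphi_0};\Z)$. We then glue $E_{C^i}=S^3\setminus\nu(C^i)$ to $M_{\varphi_0}$ along $T^i$, matching meridian to $\mu_{C^i}$ and Seifert longitude to $\lambda_{C^i}$. This gluing is determined up to the finitely many orientation-preserving symmetries fixing these slopes, namely $\pm$ on both curves. The result is a manifold with boundary $T$ and a chosen slope $\mu_K$. By Gordon--Luecke, filling along $\mu_K$ and checking that the result is $S^3$ determines $K$.

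The main obstacle is the ambiguity in $\mu_K$: a priori it can change by an arbitrary number of twists along $\lambda_K$, which would give infinitely many candidates. We plan to show that the condition that the filling along $\mu_K$ produce $S^3$ forces a unique choice. We would argue homologically as follows. If $\mu_K$ and $\mu_K+t\lambda_K$ both worked, then $[\mu_{C^i}]$ would change by $m_i t[\lambda_K]$. The second filling would contain a closed surface built from the fiber $\Sigma$ and meridian disks, giving nonzero $H_1$ unless $t=0$. If this homological argument fails, the fallback is Gordon--Luecke together with the fact that the knot complement $S^3\setminus\nu(K)$ is determined by its JSJ pieces and gluings; infinitely many twists along a slope yield distinct fillings, and at most finitely many of these fillings can be $S^3$, by the cyclic surgery theorem applied to the hyperbolic piece $M_{\psi_0}$ with the other cusps filled.
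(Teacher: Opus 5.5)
Your outline (rebuild the complement from $M_{\psi_0}$, read off $\lambda_{C^i}$ from $c^i_0$, get $\mu_{C^i}$ from Proposition~\ref{prop:sat-meridian}, finish with Gordon--Luecke) matches the paper. The gap is at the step you yourself flag as the main obstacle: controlling the twist $\mu_K\mapsto\mu_K+t\lambda_K$.

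\textbf{The homological argument fails.} Every twisted candidate is an integral homology sphere. Since $\partial\Sigma_0=\partial\Sigma\sqcup\bigsqcup_{i,j}c^i_j$ bounds the fiber, and $[c^i_j]=[c^i_0]$ in the mapping torus, you get $[\lambda_K]=\pm\sum_i m_i[\lambda_{C^i}]$ in $H_1(M_{\varphi_0};\Z)$. Let $Y_t$ be obtained by filling $T$ along $\mu_K+t\lambda_K$ and gluing each $E_{C^i}$ with its meridian sent to the curve $\mu^t_{C^i}$ satisfying $[\mu^t_{C^i}]=m_i[\mu_K+t\lambda_K]$, as Proposition~\ref{prop:sat-meridian} dictates. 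Mayer--Vietoris then gives
\[ H_1(Y_t;\Z)\cong \frac{H_1(M_{\varphi_0};\Z)}{\langle \mu_K+t\lambda_K,\,\lambda_{C^1},\dots,\lambda_{C^k}\rangle} = \frac{H_1(M_{\varphi_0};\Z)}{\langle \mu_K,\,\lambda_{C^1},\dots,\lambda_{C^k}\rangle}\cong H_1(S^3;\Z)=0 \]
whenever the curves $\mu^t_{C^i}$ exist. So no closed surface forces $H_1\neq 0$, and requiring the homology of $S^3$ pins down nothing.

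\textbf{The cyclic surgery fallback does not apply as stated.} Because $[\lambda_K]\neq 0$ in $H_1(M_{\varphi_0};\Z)$, the companion meridians $\mu^t_{C^i}$ move with $t$. The candidates are therefore not Dehn fillings of a single one-cusped manifold along varying slopes. You could instead fill each $T^i$ along the $t$-independent slope $\lambda_{C^i}$, which amounts to replacing each companion by an unknot. That does give one manifold, but it fibers with fiber $\Sigma_0$ capped off by disks. It is therefore a solid torus whenever $\Sigma_0$ is planar, which can happen with $\psi_0$ pseudo-Anosov. Then every slope $\mu_K+t\lambda_K$ yields $S^3$, and the cyclic surgery theorem, which needs a non-Seifert-fibered manifold, says nothing.

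\textbf{The missing ingredient.} Work with a representative $h_0$ of $\psi_0$ that is the identity on $\partial\Sigma$ and whose suspension of a point of $\partial\Sigma$ is $\mu_K$. Any two admissible choices differ by $t_\partial^n$, so their fractional Dehn twist coefficients differ by exactly $n$. The Hedden--Mark bound $|c(h)|\le 1$ for monodromies of fibered knots in $S^3$ then leaves at most three isotopy classes of $h_0$ rel $\partial\Sigma$. Each of these canonically determines the complement via Proposition~\ref{prop:sat-meridian}, and Gordon--Luecke finishes the argument. You mention the fractional Dehn twist coefficient but never use this inequality, which is what actually does the work. Note also that only finiteness is claimed, not the uniqueness you set out to prove.
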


\begin{proof}
We will show that the knots $C^1,\dots,C^k$ and the conjugacy class of $\psi_0$ determine the complement of $K\subset S^3$, and hence the knot itself \cite{gordon-luecke-complement}, up to finitely many possibilities. 

Since the longitude $\lambda_K$ is given by the image of $\partial \Sigma$ in the torus $T\subset \partial M_{\varphi_0}$, and the meridian $\mu_K$ is dual to this longitude, there is a homeomorphism $h_0:\Sigma_0\to\Sigma_0$ such that:
\begin{itemize}
\item $h_0$ restricts to the identity on $\partial \Sigma$,
\item $h_0$ is freely isotopic to $\psi_0$ by a free isotopy which restricts to the identity on $\partial \Sigma_0\setminus \partial \Sigma$, and
\item $\mu_K$ is the suspension of a point $p\in \partial \Sigma$ in the mapping torus $M_{h_0}\cong M_{\varphi_0}$.
\end{itemize}
Given such an $h_0$, the meridian $\mu_{C^i}$ on the torus $T^i\subset M_{h_0}$ is completely determined according to Proposition \ref{prop:sat-meridian}, and the longitude $\lambda_{C^i}$ is the image of $c^i_0$ in $T^i$. That is, given $h_0$ as above, there is a canonical way of gluing the knot complements $E_{C^1},\dots, E_{C^k}$ to $M_{h_0}$ to form the complement $S^3\setminus \nu(K)$, and the resulting complement depends only on the isotopy class of $h_0$ rel $\partial \Sigma$. So the only ambiguity in the possibilities for $K$, having fixed the knots $C^1,\dots,C^k$ and the map $\psi_0$, is the ambiguity in the possibilities for the isotopy class of $h_0$ rel $\partial \Sigma_0$.

Let $h_0$ be as above. The \emph{fractional Dehn twist coefficient} of $h_0$ at $\partial \Sigma$ is a quantity \[c(h_0)\in \Q,\] introduced by Honda--Kazez--Mati{\'c} \cite{hkm-veering} as a reformulation of Gabai's notion of degeneracy slope, which measures the twisting near $\partial \Sigma$ in the free isotopy between $h_0$ and $\psi_0$. In this case, $c(h_0)$ is precisely the fractional Dehn twist coefficient of the monodromy of the knot $K$. Since $K$ is a knot in $S^3$, we have, for instance by \cite[Theorem 1]{hedden-mark-fdtc}, that \begin{equation}\label{eq:fdtc}|c(h_0)|\leq 1.\end{equation} Now suppose that $h_0$ and $h_0'$ are two maps satisfying the three conditions above. Then, just from the first two conditions, there is some integer $n$ such that $h_0'$ is isotopic to $t_\partial^n \circ h_0$ rel $\partial \Sigma_0$, where $t_\partial$ is a Dehn twist about a curve in $\Sigma_0$ parallel to the boundary component $\partial \Sigma$. It follows that \[c(h_0')-c(h_0) = n.\] But the constraint in \eqref{eq:fdtc}  says that \[c(h’_0) = c(h_0) + n \in [-1,1],\] leaving three possibilities for $n$.  In particular, there are at most three possibilities for the isotopy class of $h_0$ rel $\partial \Sigma_0$, and hence at most three possibilities for $K$. 

Note that conjugating $\psi_0$ corresponds to conjugating the possible candidates for $h_0$. But conjugating a given $h_0$ preserves  $M_{h_0}$ together with the relevant peripheral structures (the meridians and longitudes of $C^1,\dots,C^k$) up to homeomorphism, and thus does not change the possibilities for $S^3\setminus \nu(K)$. So, in fact, there are only finitely many possibilities for $K$ once we specify the knots $C^1,\dots, C^k$ and the mere \emph{conjugacy class} of $\psi_0$.
\end{proof}

\section{Immersed curves and a satellite inequality} \label{sec:satellite-inequality}
Our goal in this section is to prove Theorem \ref{thm:gensat}. The main tool used in our proof is an immersed curves formulation of bordered Heegaard Floer homology for 3-manifolds with torus boundary and knots therein, as developed in \cite{hrw,hrw-properties}. We provide a very cursory review of these topics, tailored to our needs; for more details, see the references above as well as \cite{LOT, LOT-morphism}.

\subsection{Bordered Heegaard Floer homology}
Let us fix a torus $T$ and a point $\bullet$ therein. Let \[T_\bullet = T\setminus \{\bullet\}.\] We also fix a pair of simple oriented closed curves $\eta,\xi\subset T_\bullet$ which intersect in a single point, with \[\eta\cdot\xi = +1.\]A \emph{bordered manifold with torus boundary} is a compact oriented 3-manifold $M$ with torus boundary, together with an orientation-reversing homeomorphism \[\varphi:T\to \partial M,\] considered up to isotopy rel $\bullet$. We will often suppress the parametrization $\varphi$ from the notation.

In \cite{LOT}, Lipshitz, Ozsv{\'a}th, and Thurston define the \emph{torus algebra} $\Alg$. And given a pointed bordered Heegaard diagram $(\mathcal{H},z)$ for a bordered manifold $M$ with torus boundary, they define a certain left differential graded module over $\Alg$ called a \emph{type D structure}, denoted by \[\CFD(\mathcal{H},z).\] Up to homotopy equivalence, this module is an invariant of $M$, and hence also denoted by $\CFD(M)$.

If $M_1$ and $M_2$ are bordered manifolds with torus boundary, then their boundary parametrizations give a canonical way of gluing them to form a closed oriented 3-manifold $-M_1\cup M_2$. The pairing theorem of \cite{LOT}, reinterpreted in \cite{LOT-morphism}, says that the Heegaard Floer homology of this closed manifold is isomorphic to the homology of the space of morphisms between the corresponding type D structures, \[\HFhat(-M_1\cup M_2) \cong H_*\left(\Mor_\Alg\big(\CFD(M_1), \CFD(M_2)\big)\right).\]

There is  a version of these results for knots in bordered manifolds as well.  Let $P\subset M$ be a knot in a bordered manifold with torus boundary.  We can represent this knot by adding a second basepoint $w$ to some pointed bordered Heegaard diagram $(\mathcal{H},z)$ for $M$. This gives rise to a type D structure $\CFD(\mathcal{H}, z, w)$ over $\Alg$, defined in the same way as $\CFD(\mathcal{H}, z)$ except that the differential no longer counts holomorphic disks covering the new basepoint $w$. 

\begin{remark}\label{rmk:dependence-on-arc}
The homotopy equivalence type of $\CFD(\mathcal{H}, z, w)$ is not quite an invariant of $P\subset M$: it also depends on a choice of arc connecting $P$ to the point $\varphi(\bullet)\in\partial M$.  However, our interest is in the pairing formula \eqref{eq:LOT-satellite-pairing} below, which gives the same end result for any choice of arc. We thus abuse notation slightly and write $\CFD(M,P)$ for the homotopy equivalence type of $\CFD(\mathcal{H}, z, w)$.
\end{remark}

Suppose that $M_1$ and $M_2$ are bordered manifolds with torus boundary, and $P\subset M_2$ is a knot. Let $K$ denote the image of $P$ in $-M_1\cup M_2$.  By \cite[Theorem 11.19]{LOT}, reinterpreted in \cite{LOT-morphism}, there is a similar pairing formula for knot Floer homology,
\begin{equation}\label{eq:LOT-satellite-pairing}
\HFK(-M_1 \cup M_2, K) \cong H_*\left(\Mor_\Alg\big(\CFD(M_1), \CFD(M_2,P)\big)\right).
\end{equation}
We will be particularly interested in the case that $P\subset M_2$ is the pattern of a generalized satellite knot $K\subset -M_1\cup M_2$ with satellite torus $\partial M_1 = \partial M_2$.

\subsection{An immersed curves formulation}

We will use a reformulation of type D structures and the pairing formulas above in terms of immersed curves, following the work of Hanselman, Rasmussen, and Watson in \cite{hrw,hrw-properties}. 

By \cite[Theorem 1.5]{hrw}, a type D structure over $\Alg$ determines a collection of compact oriented immersed curves in $T_\bullet$, as long as  the module  satisfies an algebraic constraint called \emph{extendability}, which is known to hold for $\CFD$ of bordered manifolds with torus boundary \cite[Theorem 1.4]{hrw}. The homotopy class of this multicurve is an invariant of the corresponding type D structure up to homotopy equivalence. Moreover, this curve fully encodes the homotopy equivalence type of the corresponding module if one includes some additional decorations, in the form of local systems and gradings on the multicurve. For our purposes, we will be able to ignore these decorations and work only with the underlying immersed multicurves, as explained in a bit.

Suppose that $M$ is a bordered manifold with torus boundary, and consider the immersed curve in $T_\bullet$ determined by $\CFD(M)$. Its image under the boundary parametrization $\phi$ is a multicurve in \[\partial M \setminus \{\phi(\bullet)\},\] which is shown in \cite{hrw} to be independent of $\phi$. This curve is thus an invariant of the underlying manifold $M$, and will be denoted by \[\Gamma(M)\subset \partial M.\]  When equipped with the additional decorations above, this curve is denoted by $\HFhat(M)$ in \cite{hrw}. 

\begin{remark}
We will use the conventions for the local system decorations  in \cite{Hanselman:CFK}, which differ slightly from those in \cite{hrw}. The difference is that in \cite{Hanselman:CFK} the multiplicity of the local system is built into the immersed curve rather than being part of the decoration. This means we do not need to count intersection points with multiplicity when taking Floer homology as in \cite{hrw}, and the multiplicity information is not lost when we discard the local system decoration.
\end{remark}

Although the bordered Floer invariants for manifolds with torus boundary are always extendable, the type D structure $\CFD(M,P)$ corresponding to a knot $P$ in such a manifold $M$ is in general not extendable. Fortunately the extendability hypothesis in the immersed curve structure theorem for type D structures over $\Alg$ can be removed, as  done in \cite[Theorems 5.14 and 5.27]{KWZ}. There is little change to the statement or the proof, except that the resulting immersed multicurves are no longer necessarily compact: they may contain both immersed circles and immersed arcs approaching the puncture at their ends. In fact, a type D structure is extendable if and only if its corresponding immersed multicurve is compact.

Suppose that $P\subset M$ is a knot in a bordered  manifold with torus boundary. Just as before, the image under the boundary parametrization $\varphi$ of the immersed multicurve in $T_\bullet$ determined by  $\CFD(M,P)$ is independent of $\varphi$, and we denote this image by \[\Gamma(M, P)\subset \partial M\setminus \{\varphi(\bullet)\}.\] Technically, this multicurve also depends on a choice of arc connecting $P$ to $\varphi(\bullet)$ since $\CFD(M,P)$ does, as noted in Remark \ref{rmk:dependence-on-arc}.  As before, we may choose an arbitrary arc for our purposes, allowing us to abuse notation slightly.  As a matter of convention, we will often view $\Gamma(M)$ and $\Gamma(M,P)$ as curves in the punctured torus $T_{\bullet}$ via the identification of $T$ with $\partial M$ given by $\varphi$.

The key utility of the immersed curves perspective lies in a reformulation of the pairing theorems. Given two type D structures over $\Alg$, the space of morphisms from one to the other is homotopy equivalent to a certain Lagrangian Floer complex of the corresponding decorated multicurves in $T_\bullet$. This was proved in \cite{hrw} when both modules are extendable, and the argument can be adapted to the non-extendable case as in \cite[Theorem 5.22]{KWZ}. 

\begin{remark}
The authors in \cite{hrw} really proved that for two extendable type D modules $N_1,N_2$ over $\Alg$, the Floer complex of the corresponding decorated multicurves is homotopy equivalent to the box tensor product of the right $A_\infty$-module dual to $N_1$ with $N_2$, as defined in \cite{LOT}. But it is shown in \cite{LOT-morphism} that this tensor product is homotopy equivalent to $\Mor_\Alg(N_1,N_2)$. The pairing theorem in \cite{KWZ} that we appeal to for the non-extendable case is stated and proved using morphsims of type D structures rather than using dual $A_\infty$-modules.
\end{remark}

While the Floer homology of decorated multicurves depends in general on the local systems, we will be able to ignore these decorations in our setting, as explained below.

The Floer homology of a pair of decorated curves on a surface is often easy to compute when the curves are arranged nicely: if multicurves $\Gamma_1$ and $\Gamma_2$ are homotoped to intersect minimally, then the Floer differential vanishes. As a result,  $\dim \HF(\Gamma_1, \Gamma_2)$ is usually just the minimal intersection number of $\Gamma_1$ and $\Gamma_2$, which we denote $i(\Gamma_1, \Gamma_2)$. The only caveat is that admissibility constraints for Floer homology sometimes require that $\Gamma_1$ and $\Gamma_2$ not be put in minimal position. This happens when a component of $\Gamma_1$ is homotopic to a component of $\Gamma_2$, or more generally if a component of $\Gamma_1$ is \emph{commensurable} to a component of $\Gamma_2$, meaning that both curves are homotopic to multiples of the same primitive curve. In this case, admissibility can be achieved by perturbing curves into a slightly non-minimal position, and we have that
\[ \dim \HF(\Gamma_1, \Gamma_2) = i(\Gamma_1, \Gamma_2) + \text{[a correction term]}, \]
where the correction term depends on local systems but is always nonnegative (see \cite[Corollary 4.10]{hrw}). Thus we always have
\[ \dim \HF(\Gamma_1, \Gamma_2) \ge i(\Gamma_1, \Gamma_2), \]
with equality holding when no components of $\Gamma_1$ and $\Gamma_2$ are commensurable. This relationship will be enough for our purposes in the proof of Theorem \ref{thm:gensat}. Note that $i(\Gamma_1, \Gamma_2)$ does not depend on the decorations on the curves, which is why we may safely ignore them as mentioned above.

This discussion together with the pairing formula in \eqref{eq:LOT-satellite-pairing} then yields the following:

\begin{proposition}\label{prop:curves-pairing}
Suppose  that $M_1$ and $M_2$ are  bordered manifolds with torus boundary, and  $P\subset M_2$ is a knot. Let $K$ denote the image of $P$ in $-M_1\cup M_2$. Then
\[ \dim \HFK(-M_1\cup M_2, K) = \dim \HF( \Gamma(M_1), \Gamma(M_2, P) ) \ge i( \Gamma(M_1), \Gamma(M_2, P) ), \]
and equality holds if $\Gamma(M_1)$ and $\Gamma(M_2, P)$ have no commensurable components.
\end{proposition}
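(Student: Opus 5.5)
The plan is to chain together the three identifications recalled above: the knot Floer pairing formula \eqref{eq:LOT-satellite-pairing}, the immersed-curve reformulation of morphism spaces, and the intersection-number bound for Floer homology of curves. First, apply \eqref{eq:LOT-satellite-pairing}, which gives
\[ \HFK(-M_1\cup M_2,K)\cong H_*\big(\Mor_\Alg(\CFD(M_1),\CFD(M_2,P))\big). \]
Here $\CFD(M_2,P)$ is taken with respect to an arbitrary choice of arc from $P$ to $\varphi(\bullet)$, as in Remark~\ref{rmk:dependence-on-arc}.

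Second, pass to curves. Since $\CFD(M_1)$ is extendable \cite[Theorem 1.4]{hrw}, it corresponds to the compact decorated multicurve $\Gamma(M_1)$. By \cite[Theorems 5.14 and 5.27]{KWZ}, $\CFD(M_2,P)$ corresponds to a possibly noncompact decorated multicurve $\Gamma(M_2,P)$ in $T_\bullet$, whose arc components run into the puncture. The pairing theorem \cite[Theorem 5.22]{KWZ} then identifies $\Mor_\Alg$ of these type D structures, up to homotopy equivalence, with the Lagrangian Floer complex of the corresponding decorated curves in $T_\bullet$. Taking homology gives the first equality. I would check two points here:
\begin{itemize}
\item the curves $\Gamma(M_1)$ and $\Gamma(M_2,P)$, viewed in $T_\bullet$ via the parametrizations, are placed so that the orientation reversal in $-M_1$ matches the convention under which $\Mor$ (rather than a box tensor product with a dual) computes Floer homology in the order $(\Gamma(M_1),\Gamma(M_2,P))$;
\item since one curve set is compact, the Floer complex with the arcs is well defined after the usual admissibility perturbation.
\end{itemize}

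Third, prove the inequality. Homotope the two multicurves into minimal position wherever admissibility allows. If no component of $\Gamma(M_1)$ is commensurable with a component of $\Gamma(M_2,P)$, minimal position is admissible and the Floer differential vanishes. With the multiplicity convention of \cite{Hanselman:CFK}, where multiplicity is built into the curves, this gives $\dim\HF=i(\Gamma(M_1),\Gamma(M_2,P))$. Note that arc components of $\Gamma(M_2,P)$ are never commensurable with closed curves, so only closed components matter. If some components are commensurable, perturb those pairs slightly off minimal position. The contribution of each such pair is its geometric intersection number plus a nonnegative correction, by \cite[Corollary 4.10]{hrw}, while the other pairs contribute exactly their minimal intersection. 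Summing over pairs of components gives $\dim\HF\ge i$.

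I expect the main obstacle to be the second step: confirming that the non-extendable pairing of \cite{KWZ} applies verbatim with one noncompact curve set. I would first verify that the puncture-ending arcs create no admissibility issues, since there are no periodic domains avoiding the puncture when they are paired against closed curves. I would also check that the nonnegativity of the correction term in \cite[Corollary 4.10]{hrw} continues to hold when only closed components are involved.
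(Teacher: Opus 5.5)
Your proposal is correct and follows essentially the same route as the paper. The paper gives no separate proof: it obtains the proposition by combining the pairing formula \eqref{eq:LOT-satellite-pairing} with the identification of $\Mor_\Alg$ with the Floer complex of the associated decorated curves (\cite{hrw} in the extendable case, \cite[Theorem 5.22]{KWZ} in general), and then uses the nonnegative correction term of \cite[Corollary 4.10]{hrw} together with the multiplicity convention of \cite{Hanselman:CFK}, exactly as you do.
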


\begin{remark}
\label{rmk:commen}
In this proposition, we are viewing $\Gamma(M_1)$ and $\Gamma(M_2, P)$ as curves in $T_\bullet$ via the boundary identifications $T\to\partial{M_i}$; this will be standard practice in what follows.
\end{remark}

Given the discussion above, it will be important to choose representatives of homotopy classes of immersed multicurves in $T_\bullet$ that intersect minimally. One way to do this is to \emph{pull tight} both curves, as described in \cite[Section 7.1]{hrw}. Informally, we imagine that each time a curve comes near the puncture $\bullet$, not counting the ends of immersed arcs, it wraps around a circular ``peg" $B_\epsilon(\bullet)$ of some radius $\epsilon$ and has minimal length subject to that constraint. Some portions of the curve lie on the circle $\partial B_\epsilon(\bullet)$---these are called \emph{corners} of the pulled tight curve---and the remaining portions of the curve are straight line segments, as interpreted with respect to the standard flat metric on the torus coming from its universal cover $\R^2$. The curve never enters the interior of $B_\epsilon(\bullet)$ except near the endpoints of immersed arcs, which approach $\bullet$ along straight lines. 

To ensure that two pulled tight multicurves intersect transversely, we pull them tight with respect to different peg radii. In fact, it is helpful to use a different peg radius for each corner of each curve. If these radii are chosen with a little care (in particular, a corner at which a curve makes a sharper turn has a smaller peg radius), then pulling both multicurves tight ensures they intersect minimally. A curve in such an arrangement, which we often draw in the covering space $\R^2$, is called a \emph{pegboard representative}; again, see \cite[\S7]{hrw} for details.

We sometimes consider a singular pegboard representative obtained by taking all peg radii to zero, in which the curve becomes piecewise linear with the endpoints of line segments at the puncture. This allows us to define a special type of corner, which we will paradoxically call a \emph{straight corner} (the term $\pi$-corner is used in \cite{hrw}). A straight corner in a nonsingular pegboard representative is a corner at which the curve does not change direction in the corresponding singular representative; this occurs when the center of the peg for the given corner is colinear in the plane with the centers of the pegs for  the preceding and following corners along the curve. Straight corners will require special attention in arguments we will make about what happens to the intersection number of two curves when  pulling one of them through a peg.

\subsection{Restrictions on immersed curves}

Here, we collect and prove some facts about the multicurves in $T_\bullet$ determined by type D structures over $\Alg$, which we will use in our proof of Theorem \ref{thm:gensat}. We begin with a little bit more background on how these modules and their corresponding multicurves are related; see \cite[\S2]{hrw} for further details.

Recall that the torus algebra $\Alg$ has two distinguished idempotents, $\iota_0$ and $\iota_1$, satisfying $1 = \iota_0+\iota_1$. Let $\mathcal{I}$ denote the subring of idempotents. Then a type D structure over $\Alg$ consists of a vector space $V$ over $\F$ which splits as a direct sum over a left action of these idempotents, \[V\cong \iota_0V \oplus \iota_1 V,\] together with a structure map \[\delta^1:V\to \Alg\otimes_\mathcal{I} V\] satisfying certain relations. Recall that we fixed dual oriented curves $\eta,\xi\subset T_\bullet$ at the beginning. If $\Gamma$ is the multicurve on $T_\bullet$ corresponding to $V$, then the intersection points in $\Gamma\cap \eta$ correspond to generators of $\iota_0V$ and the intersection points in $\Gamma\cap \xi$ correspond to generators of $\iota_1V$. Moreover, the mod 2 gradings of these generators are given by the signs of the intersection points. Therefore, the algebraic intersections of $\Gamma$ with $\nu$ and $\xi$ encode the Euler characteristics of the two idempotent summands, which are invariants of the homotopy equivalence type of $V$,
\begin{align*}
\Gamma \cdot \eta &= \chi(\iota_0V),\\
\Gamma \cdot \xi &= \chi(\iota_1V).
\end{align*}
On the other hand, these algebraic intersections determine and are determined by the homology class of $\Gamma$ in $H_1(T)\cong \Z^2$. The last general thing we will say is that a path in $\Gamma$ from one intersection point of $\Gamma \cap(\eta\cup \xi)$ to a consecutive intersection point corresponds to a nonzero component of the structure map $\delta^1$ relating the  corresponding generators of $V$.

\begin{proposition}\label{prop:curve-homology-class}
Let  $P\subset M$ be a knot in a bordered manifold  with torus boundary. Then $\Gamma(M, P)$ is homologous in $\partial M$ to some (possibly zero) multiple of the rational longitude of $M$, and the same is true of each compact component of $\Gamma(M, P)$. Moreover, if $b_1(M) = 1$ then the homology class of $\Gamma(M,P)$ is nonzero.
\end{proposition}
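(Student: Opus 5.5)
We will detect the homology class of $\Gamma(M,P)$ by pairing it with immersed curves of simple bordered manifolds, namely solid tori. The pairing formula \eqref{eq:LOT-satellite-pairing} and Proposition~\ref{prop:curves-pairing} turn homological intersection numbers into Euler characteristics of knot Floer homology of Dehn fillings.

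Let $\gamma\subset\partial M$ be a primitive slope and let $V_\gamma$ be a solid torus, glued so that its meridian goes to $\gamma$. Then $\Gamma(V_\gamma)$ is a single embedded curve isotopic to $\gamma$. The pairing theorem gives
\[ \HFK(M(\gamma),P)\cong \HF(\Gamma(V_\gamma),\Gamma(M,P)). \]
Since the Floer grading mod 2 is given by intersection signs, the Euler characteristic satisfies
\[ \chi\big(\HFK(M(\gamma),P)\big)=\pm\,[\gamma]\cdot[\Gamma(M,P)]. \]
On the other hand, $\HFK$ with the single-basepoint-pair ("hat") convention has Euler characteristic, summed over Alexander gradings, equal to $\pm\chi(\HFhat(M(\gamma)))$. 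This equals $\pm|H_1(M(\gamma))|$ when $M(\gamma)$ is a rational homology sphere, and $0$ otherwise. We need to be careful with conventions here. Running the same argument for $\Gamma(M)$ recovers the known fact that $[\Gamma(M)]\cdot[\gamma]=\pm|H_1(M(\gamma))|$. Therefore, up to an overall sign,
\[ [\gamma]\cdot[\Gamma(M,P)]=\pm[\gamma]\cdot[\Gamma(M)] \]
for every slope $\gamma$. To get a consistent global sign, we compare the two pairings on the bordered-diagram level: adding the basepoint $w$ only deletes differentials and does not change generators or gradings. So $\CFD(M,P)$ and $\CFD(M)$ have the same generators with the same idempotents and $\F_2$-gradings, and hence equal $\chi(\iota_0V)$ and $\chi(\iota_1V)$. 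This is the cleanest route, and I would make it the main argument: it gives
\[ [\Gamma(M,P)]=[\Gamma(M)] \]
directly, using the description of these Euler characteristics as $\Gamma\cdot\eta$ and $\Gamma\cdot\xi$.

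It then remains to show that $[\Gamma(M)]$ is a multiple of the rational longitude, nonzero when $b_1(M)=1$. The intersection $[\Gamma(M)]\cdot[\gamma]=\pm|H_1(M(\gamma))|$ vanishes for $\gamma=\alpha_M$, since $M(\alpha_M)$ has $b_1>0$. So $[\Gamma(M)]$ is a multiple of $\alpha_M$. If $b_1(M)=1$, then a slope at distance one from $\alpha_M$ gives a rational homology sphere filling with $|H_1|\ne0$, so the class is nonzero. If $b_1(M)>1$, every filling has $b_1>0$, the class pairs to zero with everything, and it is zero, which is allowed by the statement.

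For the claim about each compact component, I would work locally on the curve. A compact component $\gamma_0$ of $\Gamma(M,P)$ corresponds to a direct summand of the type D structure up to homotopy, since the decomposition into curves is a decomposition of modules. Pairing $\gamma_0$ alone with $\Gamma(V_\alpha)$ for $\alpha=\alpha_M$ gives a direct summand of
\[ \HFK(M(\alpha_M),P)\cong\HF(\Gamma(V_\alpha),\Gamma(M,P)). \]
I expect this step to be the main obstacle. The total Euler characteristic vanishes, but we need each summand's Euler characteristic to vanish, and this does not follow from the total. I would use the spin$^c$ (Alexander) grading. $\HFK(M(\alpha_M),P)$ splits over relative spin$^c$ structures, and with $b_1(M(\alpha_M))>0$ the Euler characteristic in each spin$^c$ summand is given by a Turaev torsion of a non-rational-homology-sphere. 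Each compact curve component lies in the lift to the cover $\R^2\setminus\Z^2$ associated to $H_1(M)$, and it pairs into a union of spin$^c$-classes. The plan is to show, via the $H_1$-grading by the cover as in \cite{hrw}, that the intersection number of $\gamma_0$ with the rational longitude computes a single graded Euler characteristic summand. That summand vanishes because the torsion of $M(\alpha_M)$ relative to the nontorsion class kills the relevant coefficient. Alternatively, a compact component has a closed lift to the covering space, which forces its homology class into the kernel of $H_1(\partial M)\to H_1(M)\otimes\Q$, namely the span of $\alpha_M$. I would try this second argument first, since it is purely topological.
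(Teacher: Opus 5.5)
Your argument for the total class is correct. Its main form is exactly the paper's: $\CFD(\mathcal{H},z)$ and $\CFD(\mathcal{H},z,w)$ have the same graded generators in each idempotent, so $[\Gamma(M,P)]=[\Gamma(M)]$. Where the paper simply cites \cite[Corollary~6.6]{hrw} for the facts about $[\Gamma(M)]$, your computation $[\Gamma(M)]\cdot[\gamma]=\pm|H_1(M(\gamma))|$ is a valid self-contained substitute. In fact the per-slope identity $[\gamma]\cdot[\Gamma(M,P)]=\pm[\gamma]\cdot[\Gamma(M)]$ already gives both total-class claims for $\Gamma(M,P)$ without fixing the sign: take $\gamma=\alpha_M$, then a slope at distance one from $\alpha_M$.

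The gap is in the claim about individual compact components. Your first route cannot work as stated. Euler characteristics only detect sums of homology classes over whatever summand you isolate, and several compact components (and arcs) can pair into the same spin$^c$ summand of $\HFK(M(\alpha_M),P)$. Moreover, knowing which summand a component pairs into already presupposes the lifting structure you are trying to establish.

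Your second route asserts that a compact component has a closed lift to the cover $\overline{T}_M$ of $T_\bullet$ associated to $\ker(H_1(\partial M)\to H_1(M))$, and calls this purely topological. It is not: a closed curve in $T_\bullet$ lifts to a closed curve in that cover exactly when its homology class lies in that kernel. So the assertion is a restatement of what must be proved.

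What forces it is the relative grading on $\CFD(\mathcal{H},z,w)$. Traverse the component $\gamma_0$ as a loop from a generator $x$ back to itself. Each segment between consecutive points of $\gamma_0\cap(\eta\cup\xi)$ is a component of $\delta^1$, counting domains that avoid $z$ and $w$. The spin$^c$ parts of the grading shifts along these arrows sum to $[\gamma_0]\in H_1(T)$. Since $\gr(x)-\gr(x)$ is trivial modulo the gradings of periodic domains, whose spin$^c$ parts form $\partial_*H_2(M,\partial M)=\ker(H_1(\partial M)\to H_1(M))$, it follows that $[\gamma_0]$ is a multiple of $\alpha_M$.

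This is the proof of \cite[Corollary~6.6]{hrw} that the paper says carries over verbatim. It is also the same computation the paper carries out in the proof of Proposition~\ref{prop:noncompact}, with the Alexander component ignored.
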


\begin{proof}
This follows from \cite[Corollary 6.6]{hrw}, which makes the corresponding claim for $\Gamma(M)$. In particular, that corollary says that  $\Gamma(M)$ is homologous to a multiple of the rational longitude, and if $b(M)=1$ then it is nonzero in homology. The same proof shows that each compact component of $\Gamma(M, P)$ is homologous to a multiple of the rational longitude. For the claims about the  homology class of $\Gamma(M,P)$, it  suffices to show that $\Gamma(M,P)$ is homologous to $\Gamma(M)$. 

For this, let $(\mathcal{H},z,w)$ be a doubly-pointed bordered Heegaard diagram for $P\subset M$. Note that $\CFD(\mathcal{H},z)$ and $\CFD(\mathcal{H},z,w)$ have the same graded generators, and hence the same Euler characteristics, in each idempotent summand. Since these Euler characteristics determine the homology classes of $\Gamma(M)$ and $\Gamma(M,P)$, as described above, these multicurves are homologous, as desired.
\end{proof}

The next proposition together with Proposition \ref{prop:curve-homology-class} will be  key for our proof of Theorem \ref{thm:gensat}:

\begin{proposition}\label{prop:noncompact}
Let $M$ be a bordered manifold with torus boundary satisfying $H_2(M)=0$, and suppose that $P\subset M$ is a knot with nonzero winding number. Then every compact component of $\Gamma(M,P)$ is nullhomologous, and therefore $\Gamma(M, P)$ contains at least one noncompact component.
\end{proposition}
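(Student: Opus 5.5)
The plan is to show that a compact component of $\Gamma(M,P)$ homologous to $k\,\alpha_M$ with $k\neq 0$ would contradict the relative gradings on $\CFD(M,P)$, and then to combine this with Proposition~\ref{prop:curve-homology-class}.

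\textbf{Step 1: reduce to ruling out $k\neq 0$.} By Proposition~\ref{prop:curve-homology-class}, each compact component $\gamma$ of $\Gamma(M,P)$ satisfies $[\gamma]=k\,\alpha_M$ in $H_1(\partial M)$ for some integer $k$. So it suffices to prove $k=0$.

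\textbf{Step 2: the grading set for $\CFD(M,P)$.} I would use the relative spin$^c$ (or $H_1$) grading on generators. In $\CFD(\mathcal{H},z,w)$ the differential counts only domains that avoid both $z$ and $w$. The difference between two generators is therefore an element of $H_1(E_P;\Z)$, taken modulo the periodic domains that avoid $w$. Here $E_P=M\setminus\nu(P)$, and the boundary torus contributes the algebra elements.

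In the immersed-curve picture (as in \cite[\S2]{hrw} and \cite{KWZ}), this grading amounts to a lift of $\Gamma(M,P)$ to the covering space of $T_\bullet$ associated to the map $\pi_1(T_\bullet)\to H_1(T)\to H_1(E_P;\Z)$. The image of this map is then modded out by the periodic-domain ambiguity.

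The key point is that a compact component of $\Gamma(M,P)$ is a closed loop of generators joined by components of $\delta^1$. The grading must therefore be consistent once we go all the way around. Hence the image of $[\gamma]$ in this grading group must vanish. I expect this to be the main obstacle. It requires checking carefully that the grading group is really $H_1(E_P)$ (with $w$ deleted), rather than $H_1(M)$. I would first try to model the argument on the proof of \cite[Corollary 6.6]{hrw}, replacing $M$ by $E_P$ throughout.

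\textbf{Step 3: $k\alpha_M$ has infinite order in $H_1(E_P)$.} By Remark~\ref{rmk:relation-S}, in $H_1(E_P;\Z)$ we have $n\,\alpha_M = nw\,[\mu_P]$.

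The class $[\mu_P]$ has infinite order in $H_1(E_P)$. To see this, note that $H_2(M)=0$, so $M$ has the generator $[S]$ of $H_2(M,\partial M)$. Restricting $[S]$ to $E_P$ gives a class that pairs with $\mu_P$ to give the intersection number $[S]\cdot[P]=nw\neq 0$. A rational class with nonzero pairing cannot be torsion.

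Consequently $nk\,\alpha_M = nkw\,[\mu_P]$ is nonzero, indeed non-torsion, whenever $k\neq 0$. The periodic-domain ambiguity comes from $H_2$ classes and cannot kill a non-torsion class that is detected by $[S]$. This contradicts Step 2, so $k=0$.

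\textbf{Step 4: a noncompact component exists.} Since $H_2(M)=0$ and $\partial M$ is a torus, half lives half dies gives $b_1(M)=1$. Proposition~\ref{prop:curve-homology-class} then says $[\Gamma(M,P)]\neq 0$. All compact components are nullhomologous by Step 3, so some component must be noncompact.
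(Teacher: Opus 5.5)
Your strategy works and is a homological repackaging of the paper's argument, but the justification in Step~3 is false. The class $[S\cap E_P]$ pairs to \emph{zero} with $\mu_P$, not to $nw$. The curves $S\cap\partial\nu(P)$ are meridians parallel to $\mu_P$, so a pushed-in copy of $\mu_P$ misses $S\cap E_P$; equivalently, $\mu_P$ bounds a meridian disk in $M$, so $[S]\cdot[\mu_P]=0$. The number $nw=[S]\cdot[P]$ is the pairing with a longitude of $P$, not with its meridian. The same surface also pairs to zero with $\alpha_M$, since $\partial S$ is parallel to $\alpha_M$. So the closing phrase about ``a non-torsion class detected by $[S]$'' does not hold either. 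The conclusion that $[\mu_P]$ has infinite order in $H_1(E_P)$ is nevertheless true, and the fix is short. In the Mayer--Vietoris sequence for $M=E_P\cup\nu(P)$, the map $H_1(\partial\nu(P))\to H_1(E_P)\oplus H_1(\nu(P))$ is injective because $H_2(M)=0$, and it sends $\mu_P$ to $([\mu_P],0)$. Hence $nk\alpha_M=nkw[\mu_P]\neq 0$ in $H_1(E_P)$ whenever $k\neq 0$, which is what you need.

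Step~2 should also be tightened, and doing so removes what you called the main obstacle. Concatenating the domains of the $\delta^1$-components around a compact component $\gamma$ gives a domain $B\in\pi_2(x,x)$ with $n_z(B)=n_w(B)=0$, whose boundary multiplicities represent $[\gamma]$. Capping $B$ off with the $\alpha$- and $\beta$-disks gives a $2$-chain in $M$ disjoint from $P$, i.e.\ in $E_P$, with boundary $\gamma$. So $[\gamma]=0$ in $H_1(E_P)$ on the nose, and there is no periodic-domain ambiguity to quotient by. Indeed, no nonzero periodic domain avoids $w$: it would cap off to some $j[S]$ with $j[S]\cdot[P]=jnw=0$, forcing $j=0$.

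With these repairs your proof is correct. The paper instead keeps the Alexander component of the bordered grading and computes in Lemma~\ref{lem:periodic-grading} that the periodic class $\Pi_0$ has $c_0=nw\neq 0$. A loop with zero Alexander change must then have zero spin$^c$ change. You push $[\gamma]$ into $H_1(E_P)$ and use the relation $n\alpha_M=nw[\mu_P]$ of Remark~\ref{rmk:relation-S}. These are the same computation, since $\Pi_0$ encodes exactly that relation. Your route replaces the Heegaard-diagram computation of $c_0$ with a purely topological fact about $E_P$. The paper's route lets it quote the standard bordered grading formalism directly, without re-deriving anything for $E_P$.
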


Our proof of Proposition \ref{prop:noncompact} makes use of the gradings on bordered Floer invariants. Let $P\subset M$ be a knot in a bordered manifold with torus boundary, where $H_2(M)=0$. Then
\begin{equation}\label{eq:h2}
H_2(M,\partial M)\cong \Z,
\end{equation}
as shown in \S\ref{sec:generalized-satellite}.  The full grading defined on the bordered invariant $\CFD(M,P)$ takes values in a noncommutative group, but the noncommutativity arises from a portion of the grading, the Maslov component, that we will ignore. For our purposes, $\CFD(M,P)$ has a grading valued in the quotient \[\frac{\tfrac 1 2 \Z \times \tfrac 1 2 \Z \times \Z}{\langle\Pi_0\rangle},\] where we think of the first $\tfrac 1 2 \Z \times \tfrac 1 2 \Z$ as the half-integer lattice points in \[H_1(T)\cong\Z^2\cong \Z\langle \eta\rangle \oplus \Z\langle \xi\rangle,\] and \[\Pi_0 = (a_0, b_0, c_0)\] is an element corresponding to a generator of \eqref{eq:h2}.

The first two components of the grading are called the \emph{spin$^c$ component}, and the third grading component is called the \emph{Alexander component}. Although we will not need this, the pair \[(a_0,b_0)\in\Z\langle \eta\rangle \oplus \Z\langle \xi\rangle\] represents the image of a generator under the map \[H_2(M,\partial M) \to H_1(\partial M),\] where we identify the latter with $H_1(T)$ via the boundary parametrization $T \to \partial M$. This fact about the spin$^c$ component of $\Pi_0$ is well-known; see, for instance, the discussion in \cite[\S2.2]{hrw-properties}.

We will be more interested in the Alexander component $c_0$ of $\Pi_0$, which is understood by experts but less well-known. Let us first recall roughly how it is defined. Let $(\mathcal{H},z,w)$ be a doubly-pointed bordered Heegaard diagram for $P\subset M$. Let  $x$ and $y$ be two generators in the same idempotent summand of the type D structure $\CFD(\mathcal{H},z,w)$, and let $B$ be the domain of a Whitney disk in $\pi_2(x,y)$. Then the difference between the Alexander gradings of $x$ and $y$ is given by \[n_z(B)-n_w(B).\] Note that $B$ is well-defined up to adding periodic domains.  The set of periodic domains is given by \[\pi_2(x,x)\cong H_2(M,\partial M) \cong \Z,\] and is thus generated by some domain $B_0$. Then $\Pi_0$ is the grading of $B_0$, and is well-defined up to sign. In particular, we have that \[c_0 = n_z(B_0)-n_w(B_0),\] up to sign. The following lemma characterizes $c_0$ in terms of the winding number of $P$:

\begin{lemma}\label{lem:periodic-grading}
Let $M$ be a bordered manifold with torus boundary satisfying $H_2(M)=0$, and suppose that $P\subset M$ is a knot with winding number $w_P$. Let $\Pi_0 = (a_0, b_0,c_0)$ be the tuple defined above, where $c_0\geq 0$. Then $c_0 = n\cdot w_P$, where $n \geq 1$ is the order of the rational longitude of $M$.
\end{lemma}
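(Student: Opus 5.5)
We plan to compute $c_0 = n_z(B_0) - n_w(B_0)$ by relating the generating periodic domain $B_0$ to a generator $[S]$ of $H_2(M,\partial M;\Z)\cong\Z$. Here $S$ is the properly embedded surface from \S\ref{sec:generalized-satellite}, with $\partial_*[S] = n\,\alpha_M$.

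First, recall the standard correspondence between periodic domains of a bordered Heegaard diagram $(\mathcal{H},z)$ and relative classes. A periodic domain $B$ in $\pi_2(x,x)$ has $n_z(B)=0$, since $z$ lies in the region adjacent to the boundary puncture. Its boundary is a combination of full $\beta$-circles, full $\alpha$-circles and the bordered $\alpha$-arcs. Capping the $\alpha$- and $\beta$-portions with the cores of the corresponding handles produces a surface $\widetilde B\subset M$ with boundary on $\partial M$. This gives the isomorphism $\pi_2(x,x)\cong H_2(M,\partial M)$, and we may take $B_0$ with $[\widetilde{B_0}] = [S]$, up to sign.

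Second, we use the standard fact that for a doubly-pointed diagram the knot $P$ is represented by an arc $a_\alpha$ from $w$ to $z$ in the complement of the $\alpha$-curves, pushed slightly into the $\alpha$-handlebody side, together with an arc $a_\beta$ from $z$ to $w$ in the complement of the $\beta$-curves, pushed into the $\beta$ side. The intersection number of $P$ with $\widetilde{B_0}$ is then computed as in the closed case (Ozsváth--Szabó's argument that $n_z-n_w$ of a periodic domain equals its pairing with the knot). The pushed-in arcs miss the capping disks, because those are attached along $\alpha$ and $\beta$ curves that the arcs avoid on the correct side. So the only intersections occur where $P$ passes through $\Sigma$ between the two sides, and these are counted by multiplicities. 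Moving the basepoint across $B_0$ from $w$ to $z$ changes the local multiplicity by exactly the signed count of crossings. This gives
\[ [\widetilde{B_0}]\cdot[P] = \pm\big(n_z(B_0)-n_w(B_0)\big). \]

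Third, combine this with Definition \ref{def:winding-number}. We have $[S]\cdot[P] = n\,w_P$, so $c_0 = |n_z(B_0)-n_w(B_0)| = n\,w_P$, where the sign is fixed by the convention $c_0\ge 0$.

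The main obstacle is the first step in the bordered setting. There we must check that the surface built from a periodic domain really represents the generator of $H_2(M,\partial M)$, and not a multiple of it. We must also check that the bordered $\alpha$-arcs and the boundary puncture do not contribute extra intersections with $P$. We would handle this by citing the identification $\pi_2(x,x)\cong H_2(M,\partial M)$ from \cite{LOT}. We would then note that $P$ lies in the interior of $M$, away from the neighborhood of $\partial\overline{\Sigma}$ where the arcs live. Hence the boundary portion of $\widetilde{B_0}$ on $\partial M$ is disjoint from $P$, and only the interior computation of the second step matters.
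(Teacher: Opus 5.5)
Your proposal is correct and follows the paper's proof: you cap the generating periodic domain $B_0$ with the $\alpha$- and $\beta$-attaching disks to represent the generator of $H_2(M,\partial M)$, compute its intersection with $P$ as the signed multiplicities of $B_0$ at the two points $z$ and $w$ where $P$ crosses the Heegaard surface, and compare with $[S]\cdot[P]=n\,w_P$ from Definition~\ref{def:winding-number}. Your side remark that $n_z(B_0)=0$ is a normalization convention rather than something the argument needs, since only the difference $n_z(B_0)-n_w(B_0)$ enters.
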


\begin{proof}
Let $B_0$ be as above. Then $B_0$ determines a 2-chain in the Heegaard surface $\Sigma$ whose boundary consists of a collection of closed $\alpha$ and $\beta$ curves along with a 1-chain in the union of the $\alpha$-arcs and $\partial \Sigma$. Attaching copies of the $\alpha$ and $\beta$ attaching disks gives a 2-chain $S$ in $M$ with boundary in $\partial M$, which represents the generator of $H_2(M,\partial M)$. Orient $P$ so that it has nonnegative algebraic intersection with $S$. Then on one hand, we have that \[n\cdot w_P = [P]\cdot [S]\] by Definition \ref{def:winding-number}. On the other, we can calculate this algebraic intersection number via the Heegaard diagram: $P$ is contained entirely in the $\alpha$ and $\beta$ handlebodies away from the attaching disks, except where it intersects $\Sigma$ at $z$ and $w$, with positive and negative signs, respectively, so we have \[[P]\cdot [S] = n_{z}(B_0) - n_w(B_0) = c_0.\] Combining these two equations proves the lemma.
\end{proof}

We now have the ingredients needed to prove Proposition \ref{prop:noncompact}:

\begin{proof}[Proof of Proposition \ref{prop:noncompact}]
Since $H_2(M)=0$ implies that \[H^1(M)\cong H_2(M,\partial M) \cong \Z,\] and hence that $b_1(M)=1$, Proposition \ref{prop:curve-homology-class} implies that the homology class of $\Gamma(M,P)$ is nontrivial. The claim that $\Gamma(M,P)$ has at least one noncompact component will therefore follow from the claim that each compact component is nullhomologous. So we need only prove the latter.

Let $\gamma$ be a compact component of $\Gamma(M, P)$ and let \[\tilde x\in \Gamma(M,P)\cap (\eta\cup \xi)\] be a point on that component corresponding to some generator $x$ of the type D structure associated with some doubly-pointed bordered Heegaard diagram for $P\subset M$. We can think of $\gamma$ as a loop from $\tilde x$ to itself and use it to compute the grading difference \[\gr(x) - \gr(x) \in \tfrac 1 2 \Z \times \tfrac 1 2 \Z \times \Z,\] which we know is trivial in the quotient by $\langle \Pi_0 \rangle$. Let \[\tilde x = \tilde x_0, \tilde x_1,\dots,\tilde x_m = \tilde x\] be the intersection points of $\Gamma\cap (\eta \cup \xi)$ in order as one traverses $\gamma$, and let $x_i$ be the generator corresponding to $\tilde x_i$. As mentioned at the beginning of this subsection, the existence of the segment from $\tilde x_i$ to $\tilde x_{i+1}$  means that there is a component of the structure map from $x_i$ to $x_{i+1}$ (tensored with an algebra element). But this structure map counts holomorphic curves corresponding to domains which avoid the $z$ and $w$ basepoints of the Heegaard diagram. It follows from the discussion above that the Alexander component of the grading difference between $x_i$ and $x_{i+1}$ is 0. The Alexander grading difference between $x$ and $x$, as measured by the loop $\gamma$, is therefore also $0$.

On the other hand, the spin$^c$ component of this grading difference captures the homology class of the loop $\gamma$. That is, we have
\[ \gr(x) - \gr(x) \equiv (a,b,0) \in \frac{\tfrac 1 2 \Z \times \tfrac 1 2 \Z \times \Z}{\langle \Pi_0\rangle}, \]
where $(a,b)$ represents the homology class of $\gamma$ with respect to the basis $(\eta,\xi)$. Since $\gr(x) - \gr(x) \equiv (0,0,0)$, it follows that $(a,b,0)$ is a multiple of $\Pi_0 = (a_0, b_0, c_0)$. But since $c_0$ is a positive multiple of the winding number of $P$, by Lemma \ref{lem:periodic-grading}, which we assumed to be nonzero, it follows that $c_0 \neq 0$, and hence we must have that \[(a,b,0)=(0,0,0).\] This shows that $\gamma$ is nullhomologous, as desired.
\end{proof}

\subsection{Proof of Theorem \ref{thm:gensat}} 
Let $K\subset Y$ be a generalized satellite knot with companion $C\subset Z$. According to \S\ref{sec:generalized-satellite}, this means that we can write \[Y = -M_1 \cup_T M_2,\] where the satellite torus is identified with our fixed torus $T$, the knot $K$ is the image of a pattern knot $P\subset M_2$, and \[M_1 = -(Z\setminus \nu(C)).\] (Note that what we call $M_1$ here differs from what we called $M_1$ in \S\ref{sec:generalized-satellite} by an orientation reversal.) Let us then denote the multicurves $\Gamma(M_1)$ and $\Gamma(M_2,P)$ by $\Gamma_C$ and $\Gamma_P$ for short. These are curves in $\partial M_1$ and $\partial M_2$, but as in Proposition \ref{prop:curves-pairing} and Remark \ref{rmk:commen}, we may also view them as multicurves in $T_\bullet$, via the boundary identifications $T\to \partial M_i$. 

Proposition \ref{prop:curves-pairing} then tells us that
\[ \dim \HFK(Y, K) \ge i(\Gamma_P, \Gamma_C). \]
We wish to compare this to $\dim \HFK(Z, C)$, which according to \cite[Proposition~12]{hrw-properties} can be recovered from $\Gamma_C$ by taking the Floer homology with the single noncompact Lagrangian $\Gamma_\mu$ that is embedded and homologous to $\mu_C$. That is,
\[ \HFK(Z, C) \cong \HF(\Gamma_\mu, \Gamma_C). \]
(One may also view this as an application of Proposition \ref{prop:curves-pairing}, as it is straightforward to compute that the multicurve for the core curve in the solid torus is $\Gamma_\mu$.) Each component of $\Gamma_C$ is homologous to a multiple of the rational longitude of $M_1$, by \cite[Corollary~6.6]{hrw}. Since this longitude is dual to $\mu_C$, as noted in Remark \ref{rmk:dual}, it follows that no component of $\Gamma_C$ is homotopic or commensurable to $\Gamma_\mu$, and we have that
\[\dim \HF(\Gamma_\mu, \Gamma_C) = i(\Gamma_\mu, \Gamma_C).\]
Theorem \ref{thm:gensat} therefore follows immediately from the proposition below:

\begin{proposition}
If the pattern $P\subset M_2$ has nonzero winding number, then 
\[ i(\Gamma_P, \Gamma_C) \ge i(\Gamma_\mu, \Gamma_C). \]
\end{proposition}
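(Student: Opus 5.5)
By Proposition~\ref{prop:noncompact}, $\Gamma_P$ has at least one noncompact component, and I will show that a single such component $\gamma$ already satisfies
\[ i(\gamma,\Gamma_C) \ge i(\Gamma_\mu,\Gamma_C). \]
This is enough, because the minimal intersection number with $\Gamma_C$ is additive over the components of $\Gamma_P$.

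The first step is to pin down the homology class of $\gamma$. Since $H_2(M_2)=0$, Propositions~\ref{prop:curve-homology-class} and~\ref{prop:noncompact} say that the compact components of $\Gamma_P$ are nullhomologous, while the total class is a nonzero multiple of the rational longitude $\alpha_{M_2}=\mu_C$. So the noncompact components together carry a nonzero multiple of $\mu_C$. After closing up each arc through the puncture, at least one noncompact component $\gamma$ then has a class with nonzero $\mu_C$-coefficient, measured relative to the dual curve $\lambda_C$. For that $\gamma$, lifted to the universal cover $\R^2\setminus\Z^2$ of $T_\bullet$, the two ends are lattice points whose displacement has nonzero $\mu_C$-component. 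This is the only place the winding number hypothesis enters.

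The second step is a deformation argument in pegboard representatives. $\Gamma_\mu$ is the straight arc from the puncture to the adjacent puncture in the $\mu_C$ direction, so it is the tight arc of minimal displacement. I would take the pulled-tight representative of $\gamma$ and shorten it step by step. Each step pulls the curve across a peg, or slides an endpoint from one puncture to another, and at each step I check that the minimal intersection with the pulled-tight $\Gamma_C$ does not increase. The process ends at an arc with the same endpoints as a translate of $\Gamma_\mu$, or at a concatenation of such arcs; each of these meets $\Gamma_C$ at least $i(\Gamma_\mu,\Gamma_C)$ times, since $\Gamma_C$ is invariant under deck translations.

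An alternative route, which I would try first, is a straightening argument. Each lift of $\Gamma_C$ to $\R^2\setminus\Z^2$ is a properly embedded curve invariant under translation by a multiple of $\lambda_C$ (by \cite[Corollary~6.6]{hrw}) and stays in a bounded-width strip around a line parallel to $\lambda_C$. Every arc from a puncture $p$ to a puncture $p+a\mu_C+b\lambda_C$ with $a\neq0$ must cross every lift of $\Gamma_C$ whose strip separates the two endpoints. By translation invariance, the number of such crossings is at least $|a|$ times the count for the straight segment $\Gamma_\mu$ from $p$ to $p+\mu_C$. The real content is showing that minimal-position intersections in $T_\bullet$ are counted by these separations in the cover, and the separation argument needs care when lifts of $\Gamma_C$ pass through or wrap around lattice points near the endpoints.

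The main obstacle is that step: the comparison at the punctures, where corners of $\Gamma_C$ sit on pegs at the endpoints of $\gamma$. This is where straight corners need special attention, because pulling an arc end through a peg can change intersections with curves that have straight corners at that peg. I would handle it by choosing peg radii for $\Gamma_C$ smaller than the approach of $\gamma$'s ends. Then I would check that near each endpoint, $\gamma$ and $\Gamma_\mu$ meet the same local strands of $\Gamma_C$ on the peg, or that $\gamma$ meets more of them.
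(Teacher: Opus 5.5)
There is a genuine gap, and it comes from your first reduction. You choose a single noncompact component $\gamma$ with class $a\mu_C+b\lambda_C$, $a\neq 0$. Nothing forces $b=0$ for an individual arc: only the \emph{total} class of the noncompact components is known to be a multiple of $\mu_C$.

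Pulling tight and pulling through pegs preserve the class in $H_1(T)$. So when $b\neq 0$ your deformation ends at a straight arc of slope $a\mu_C+b\lambda_C$, not at translates of $\Gamma_\mu$. You would then need an inequality between arcs of \emph{different} slopes, which is more than the proposition requires, and neither of your routes proves it:
\begin{itemize}
\item ``Sliding an endpoint to another puncture'' changes the homology class of the arc, and you give no reason it does not increase intersections.
\item The separation count only detects lifts of $\Gamma_C$ lying strictly between the endpoints. The intersections with $\Gamma_\mu$ can come entirely from lifts passing through or around the pegs at the endpoints. Components of $\Gamma_C$ can even be nullhomologous in $T$, since \cite[Corollary~6.6]{hrw} only gives a possibly zero multiple of $\lambda_C$. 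Such components have compact lifts that separate nothing, yet they still meet $\Gamma_\mu$.
\item Your proposed local fix is false as stated. Which corner arcs of $\Gamma_C$ on a peg an arc crosses depends on the direction in which it enters the puncture. A steep arc can therefore miss corner strands that $\Gamma_\mu$ crosses, and the deficit is made up only by crossings elsewhere along the arc.
\end{itemize}

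The missing idea is to merge the noncompact components rather than select one.
\begin{enumerate}
\item Delete the compact components. They are nullhomologous, so the total class is unchanged.
\item Join the terminal end of one arc to the initial end of another inside a small neighborhood of the puncture. This does not change the intersection number with $\Gamma_C$. Iterating gives a single arc whose class is a nonzero multiple of $[\mu_C]$.
\item Pull tight, and pull through pegs at nonstraight corners. Use the argument of \cite{hrw}: whenever a nested corner of $\Gamma_C$ produces two new intersections, some other corner loses two. This fails at straight corners, which is why you stop there.
\item The result has only straight corners and class $k[\mu_C]$, so it lies in a neighborhood of $\Gamma_\mu$. Resolving its self-intersections gives $\Gamma_\mu$ plus closed pieces, which you delete.
\end{enumerate}
That last step is needed even in your version. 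An arc that passes intermediate pegs with straight corners is not literally a union of translates of $\Gamma_\mu$, so deck-invariance of $\Gamma_C$ alone does not give the bound.
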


\begin{proof}
We will adapt the proof of \cite[Theorem 7.15]{hrw}, which as a special case gives an inequality for Heegaard Floer homology under pinching. 

By Definition \ref{def:generalized-satellite}, we have that $H_2(M_2) = 0$, which implies as in \S\ref{sec:generalized-satellite} that \[H^1(M_2) \cong H_2(M_2,\partial M_2) \cong \Z.\] In particular, $b_1(M_2) = 1$, so Proposition \ref{prop:curve-homology-class} says that $\Gamma_P$ is homologous to a nonzero multiple of the rational longitude of $M_2$, which by Definition \ref{def:generalized-companion} is the meridian $\mu_C$. The assumption that $P$ has nonzero winding number implies by Proposition \ref{prop:noncompact} that each compact component of $\Gamma_P$ is nullhomologous. Therefore,  $\Gamma_P$ has at least one noncompact component, and the total homology class of these noncompact components is a nonzero multiple of $[\mu_C] = [\Gamma_\mu]$.

With this setup in hand, our strategy is to start with $\Gamma = \Gamma_P$ and transform it into $\Gamma = \Gamma_\mu$ via a sequence of simple moves that do not increase the intersection number with the fixed compact curve $\Gamma_C$, which we assume has been pulled tight in $T_\bullet$. These moves are:
\begin{enumerate}[label=(\roman*)]
    \item resolve a self-intersection of $\Gamma$ in an oriented way; \label{i:move1}
    \item replace two (oriented) immersed arcs with one, by connecting the terminal end of one to the initial end of the other in a neighborhood of the puncture; \label{i:move2}
    \item delete a component of $\Gamma$; \label{i:move3}
    \item pull $\Gamma$ tight in the complement of the puncture; and \label{i:move4}
    \item assuming $\Gamma$ is pulled tight, pull $\Gamma$ through the puncture at one nonstraight corner (see \cite[Figure 54]{hrw}). \label{i:move5}
\end{enumerate}
See Figure~\ref{fig:curve-moves} for illustrations of these moves, and Figure~\ref{fig:example-moves} for an example (we doubt this corresponds to an actual generalized satellite) of how these can be used to turn $\Gamma_P$ into $\Gamma_\mu$ without ever increasing the intersection number with $\Gamma_C$. 

\newcommand{\moveref}[1]{%
\begingroup%
\hypersetup{hidelinks}%
\ref{i:move#1}%
\endgroup%
}
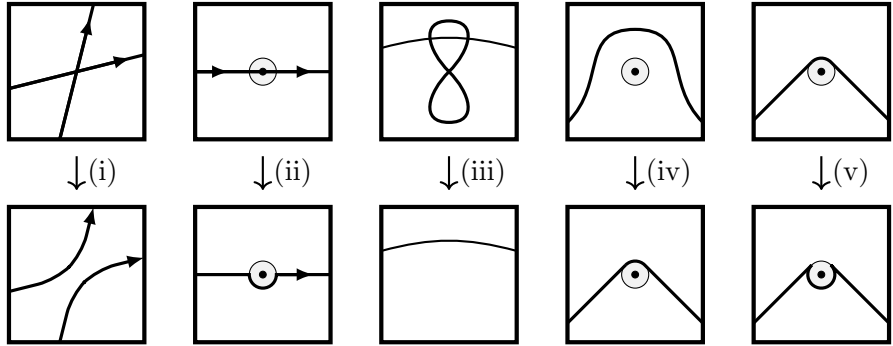
\begin{figure}
\begin{tikzpicture}[scale=0.9]
\foreach \i in {1,2,3,4,5} {
\draw[ultra thick] (2.75*\i,0) coordinate (lower\i) rectangle ++(2,2);
\draw[ultra thick] (lower\i) ++(0,3) coordinate (upper\i) rectangle ++(2,2);
\draw[->] (upper\i) ++(1,-0.25) to coordinate[midway] (arrow\i) ++(0,-0.5);
\node[right] at (arrow\i) {\moveref{\i}};
}
\begin{scope}
\clip (upper1) rectangle ++(2,2);
\draw[very thick] (upper1) ++ (0,0.75) coordinate (a) -- coordinate[pos=0.9] (b) ++(2,0.5);
\draw[very thick] (upper1) ++ (0.75,0) coordinate (c) -- coordinate[pos=0.9] (d) ++(0.5,2);
\draw[very thick, -latex] (a) -- (b);
\draw[very thick, -latex] (c) -- (d);
\end{scope}
\begin{scope}
\clip (lower1) rectangle ++(2,2);
\path (lower1) ++ (0,0.75) coordinate (a) -- coordinate[pos=0.35] (b) coordinate[pos=0.65] (c)  ++(2,0.5) coordinate (d);
\path (lower1) ++ (0.75,0) coordinate (e) -- coordinate[pos=0.35] (f) coordinate[pos=0.65] (g) ++(0.5,2) coordinate (h);
\draw[very thick,rounded corners=2.5mm,-latex] (a) -- (b) -- (g) -- (h);
\draw[very thick,rounded corners=2.5mm,-latex] (e) -- (f) -- (c) -- (d);
%\draw[very thick, -latex] (lower1) ++ (0.75,0) coordinate (c) to[bend left=40] ++ (1.25,1.25) coordinate (d);
\end{scope}
\begin{scope}
\clip (upper2) rectangle ++(2,2);
\draw[thin,fill=gray!10] (upper2) ++ (1,1) circle (0.2) coordinate (a);
\draw[thin,fill=black] (a) circle (0.05);
\draw[very thick] (upper2) ++(0,1) coordinate (b) -- ++(2,0);
\draw[very thick, -latex] (b) -- ++(0.5,0);
\draw[very thick, -latex] (b) -- ++(1.75,0);
\end{scope}
\begin{scope}
\clip (lower2) rectangle ++(2,2);
\draw[thin,fill=gray!10] (lower2) ++ (1,1) coordinate (a) circle (0.2);
\draw[thin,fill=black] (a) circle (0.05);
\draw[very thick] (lower2) ++(0,1) coordinate (b) -- ++(0.8,0) arc (180:360:0.2) -- ++(0.8,0);
\draw[very thick, -latex] (b) ++(1.25,0) -- ++(0.5,0);
\end{scope}
\begin{scope}
\clip (upper3) rectangle ++(2,2);
\draw[very thick,looseness=1.5] (upper3) ++(1,1) coordinate (b) to[out=45,in=0] ++(0,0.75) to[out=180,in=135] ++(0,-0.75) to[out=315,in=0] ++(0,-0.75) to[out=180,in=225] ++(0,0.75);
\draw[thick] (upper3) ++(0,1.35) to[bend left=15] ++(2,0);
\end{scope}
\begin{scope}
\clip (lower3) rectangle ++(2,2);
\draw[thick] (lower3) ++(0,1.35) to[bend left=15] ++(2,0);
\end{scope}
\begin{scope}
\clip (upper4) rectangle ++(2,2);
\draw[thin,fill=gray!10] (upper4) ++ (1,1) circle (0.2) coordinate (a);
\draw[thin,fill=black] (a) circle (0.05);
\draw[very thick,looseness=1.25] (upper4) ++ (0,0.25) to[out=45, in=180] ++(1,1.375) to[out=0,in=135] ++(1,-1.375);
\end{scope}
\begin{scope}
\clip (lower4) rectangle ++(2,2);
\draw[thin,fill=gray!10] (lower4) ++ (1,1) circle (0.2) coordinate (a);
\draw[thin,fill=black] (a) circle (0.05);
\draw[very thick] (a) ++(135:0.2) coordinate (b) -- ++(225:2);
\draw[very thick] (b) arc (135:45:0.2) -- ++(-45:2);
\end{scope}
\begin{scope}
\clip (upper5) rectangle ++(2,2);
\draw[thin,fill=gray!10] (upper5) ++ (1,1) circle (0.2) coordinate (a);
\draw[thin,fill=black] (a) circle (0.05);
\draw[very thick] (a) ++(135:0.2) coordinate (b) -- ++(225:2);
\draw[very thick] (b) arc (135:45:0.2) -- ++(-45:2);
\end{scope}
\begin{scope}
\clip (lower5) rectangle ++(2,2);
\draw[thin,fill=gray!10] (lower5) ++ (1,1) circle (0.2) coordinate (a);
\draw[thin,fill=black] (a) circle (0.05);
\draw[very thick] (a) ++(135:0.2) coordinate (b) -- ++(225:2);
\draw[very thick] (b) arc (135:405:0.2) -- ++(-45:2);
\end{scope}
\end{tikzpicture}
\caption{Local pictures of the moves we use to simplify the multicurve $\Gamma$.}\label{fig:curve-moves}
\end{figure}
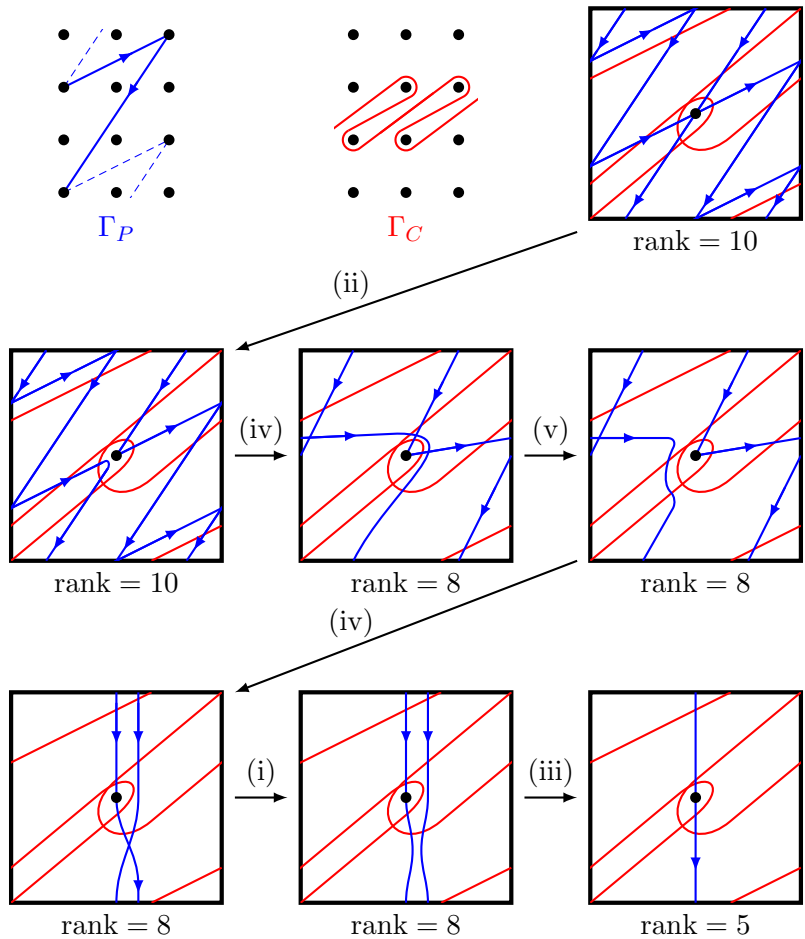
\begin{figure}
\begin{tikzpicture}[scale=0.7]
\begin{scope}
\node[blue,below] at (2,0.25) {$\Gamma_P$};
\clip (0.65,0.4) rectangle (3.35,3.6);
\draw[blue] (1,0.5) -- coordinate[pos=0.6] (b1) coordinate[pos=0.7] (a1) (3,3.5);
\draw[blue] (1,2.5) -- coordinate[pos=0.65] (a2) coordinate[pos=0.66] (b2) (3,3.5);
\draw[blue,thin,densely dashed] (1,2.5) -- ++(2,3) (1,0.5) -- ++(2,1) -- ++(-2,-3);
\foreach \i in {1,2} { \draw[blue,-latex] (a\i) -- (b\i); }
\foreach \i in {1,2,3} {
  \foreach \j in {0.5,1.5,2.5,3.5} {
    \draw[fill=black] (\i,\j) circle (0.25/3);
  }
}
\end{scope}
\begin{scope}[xshift=5.5cm]
\node[red,below] at (2,0.25) {$\Gamma_C$};
\clip (0.65,0.4) rectangle (3.35,3.6);
\begin{scope}[every path/.style={red}]
\foreach \i in {1,2} {
\draw (\i,1.5) ++ (118.57:0.2) coordinate (a);
\draw (\i,2.5) ++ (1,0) ++ (298.57:0.2) coordinate (b);
\draw (a) -- (b) arc (298.57:478.57:0.2) -- ++ (216.87:1.75);
\draw (a) arc (118.57:306.87:0.2) -- ++(36.87:1.75);
}
\end{scope}
\foreach \i in {1,2,3} {
  \foreach \j in {0.5,1.5,2.5,3.5} {
    \draw[fill=black] (\i,\j) circle (0.25/3);
  }
}
\end{scope}
\begin{scope}[xshift=11cm,scale=1/3]
\draw[ultra thick, fill=white] (0,0) coordinate (start) rectangle ++(12,12);
\node[below] at (6,0) {$\mathrm{rank}=10$};
\clip (start) rectangle ++(12,12);
\begin{scope}[every path/.style = {red}]
  \draw (start) ++ (8,12) -- ++(-12,-6);
  \draw (start) ++ (8,0) -- ++(4,2) ++ (-12,0) ;
  \draw (start) -- ++(6,5) to[out=39.8, in=39.8, looseness=4] ++(-5*12/61, 6*12/61) -- ++(-6,-5);
  \draw (start) ++ (12,12) -- ++(-6,-5) to[out=219.8, in=219.8, looseness=2.5] ++(10*12/61, -12*12/61) -- ++(6,5);
\end{scope}
\begin{scope}[every path/.style = {blue}]
\foreach \i in {-8,-4,0,4,8} {
  \draw (start) ++ (6,6) ++ (\i,0) ++(4,6) coordinate (a) -- coordinate[pos=0.2] (b) coordinate[pos=0.95] (c) ++(-8,-12) coordinate (d);
  \draw[-latex] (a) -- (b);
  \draw[-latex] (b) -- (c);
}
\foreach \i in {-6,0,6} {
  \draw (start) ++ (6,6) ++ (0,\i) ++(6,3) coordinate (a) -- coordinate[pos=0.2] (b) coordinate[pos=0.7] (c) ++(-12,-6) coordinate (d);
  \draw[-latex] (d) -- (c);
  \draw[-latex] (c) -- (b);
}
\end{scope}
\draw[fill=black] (6,6) circle (0.25);
\end{scope}
\draw[-latex] (10.75,-0.25) -- node[above,pos=0.66] {\moveref{2}} (4.25,-2.5);
\begin{scope}[yshift=-6.5cm, scale=1/3]
\draw[ultra thick, fill=white] (0,0) coordinate (start) rectangle ++(12,12);
\node[below] at (6,0) {$\mathrm{rank}=10$};
\clip (start) rectangle ++(12,12);
\begin{scope}[every path/.style = {red}]
  \draw (start) ++ (8,12) -- ++(-12,-6);
  \draw (start) ++ (8,0) -- ++(4,2) ++ (-12,0) ;
  \draw (start) -- ++(6,5) to[out=39.8, in=39.8, looseness=4] ++(-5*12/61, 6*12/61) -- ++(-6,-5);
  \draw (start) ++ (12,12) -- ++(-6,-5) to[out=219.8, in=219.8, looseness=2.5] ++(10*12/61, -12*12/61) -- ++(6,5);
\end{scope}
\begin{scope}[every path/.style = {blue}]
\foreach \i in {-8,-4,0,4,8} {
  \draw (start) ++ (6,6) ++ (\i,0) ++(4,6) coordinate (a) -- coordinate[pos=0.2] (b) coordinate[pos=0.95] (c) ++(-8,-12) coordinate (d);
  \draw[-latex] (a) -- (b);
  \draw[-latex] (b) -- (c);
}
\foreach \i in {-6,0,6} {
  \draw (start) ++ (6,6) ++ (0,\i) ++(6,3) coordinate (a) -- coordinate[pos=0.2] (b) coordinate[pos=0.7] (c) ++(-12,-6) coordinate (d);
  \draw[-latex] (d) -- (c);
  \draw[-latex] (c) -- (b);
}
% merge two blue arcs
\draw[white,fill=white] (start) ++ (5.75,5.75) circle (0.5);
\path (start) ++(0,3) ++(5,2.5) -- ++(1/3,1/6) coordinate (a);
\draw[blue] (start) ++(2,0) ++(3,4.5) -- ++(0.5,0.75) to[out=56.31,in=27.565,looseness=1.5] (a) -- ++(-0.5,-0.25);
\end{scope}
\draw[fill=black] (6,6) circle (0.25);
\end{scope}
\draw[-latex] (4.25,-4.5) -- node[above,midway] {\moveref{4}} ++(1,0);
\begin{scope}[xshift=5.5cm,yshift=-6.5cm, scale=1/3]
\draw[ultra thick, fill=white] (0,0) coordinate (start) rectangle ++(12,12);
\node[below] at (6,0) {$\mathrm{rank}=8$};
\clip (start) rectangle ++(12,12);
\begin{scope}[every path/.style = {red}]
  \draw (start) ++ (8,12) -- ++(-12,-6);
  \draw (start) ++ (8,0) -- ++(4,2) ++ (-12,0) ;
  \draw (start) -- ++(6,5) to[out=39.8, in=39.8, looseness=4] ++(-5*12/61, 6*12/61) -- ++(-6,-5);
  \draw (start) ++ (12,12) -- ++(-6,-5) to[out=219.8, in=219.8, looseness=2.5] ++(10*12/61, -12*12/61) -- ++(6,5);
\end{scope}
\begin{scope}[every path/.style = {blue}]
\draw (start) ++ (6,6) coordinate (a1) -- coordinate[pos=0.6] (b1) ++(6,1);
\draw (start) ++ (0,7) coordinate (a2) -- coordinate[pos=0.6] (b2) ++(5.5,0.25) to[out=0,in=63.435,looseness=1.5] ++(-2.5,-7.25);
\draw (start) ++ (3,12) -- coordinate[pos=0.3] (a3) coordinate[pos=0.4] (b3) ++(-3,-6) ++(12,0) -- ++(-3,-6) coordinate[pos=0.3] (a4) coordinate[pos=0.6] (b4)  ++ (0,12) -- ++(-3,-6) coordinate[pos=0.3] (a5) coordinate[pos=0.5] (b5);
\foreach \i in {1,2,3,4,5} { \draw[-latex] (a\i) -- (b\i); }
\end{scope}
\draw[fill=black] (6,6) circle (0.25);
\end{scope}
\draw[-latex] (9.75,-4.5) -- node[above,midway] {\moveref{5}} ++(1,0);
\begin{scope}[xshift=11cm,yshift=-6.5cm, scale=1/3]
\draw[ultra thick, fill=white] (0,0) coordinate (start) rectangle ++(12,12);
\node[below] at (6,0) {$\mathrm{rank}=8$};
\clip (start) rectangle ++(12,12);
\begin{scope}[every path/.style = {red}]
  \draw (start) ++ (8,12) -- ++(-12,-6);
  \draw (start) ++ (8,0) -- ++(4,2) ++ (-12,0) ;
  \draw (start) -- ++(6,5) to[out=39.8, in=39.8, looseness=4] ++(-5*12/61, 6*12/61) -- ++(-6,-5);
  \draw (start) ++ (12,12) -- ++(-6,-5) to[out=219.8, in=219.8, looseness=2.5] ++(10*12/61, -12*12/61) -- ++(6,5);
\end{scope}
\begin{scope}[every path/.style = {blue}]
\draw (start) ++ (6,6) coordinate (a1) -- coordinate[pos=0.6] (b1) ++(6,1);
\draw[rounded corners] (start) ++ (0,7) coordinate (a2) -- coordinate[pos=0.9] (b2) ++(3,0) -- ++(2,0) to[out=243.435,in=135,looseness=1] ++(0,-3.5) -- ++(-2,-3.5);
\draw (start) ++ (3,12) -- coordinate[pos=0.3] (a3) coordinate[pos=0.4] (b3) ++(-3,-6) ++(12,0) -- ++(-3,-6) coordinate[pos=0.3] (a4) coordinate[pos=0.6] (b4)  ++ (0,12) -- ++(-3,-6) coordinate[pos=0.3] (a5) coordinate[pos=0.5] (b5);
\foreach \i in {1,2,3,4,5} { \draw[-latex] (a\i) -- (b\i); }
\end{scope}
\draw[fill=black] (6,6) circle (0.25);
\end{scope}
\draw[-latex] (10.75,-6.5) -- node[above,pos=0.66] {\moveref{4}} (4.25,-9);
\begin{scope}[yshift=-13cm, scale=1/3]
\draw[ultra thick, fill=white] (0,0) coordinate (start) rectangle ++(12,12);
\node[below] at (6,0) {$\mathrm{rank}=8$};
\clip (start) rectangle ++(12,12);
\begin{scope}[every path/.style = {red}]
  \draw (start) ++ (8,12) -- ++(-12,-6);
  \draw (start) ++ (8,0) -- ++(4,2) ++ (-12,0) ;
  \draw (start) -- ++(6,5) to[out=39.8, in=39.8, looseness=4] ++(-5*12/61, 6*12/61) -- ++(-6,-5);
  \draw (start) ++ (12,12) -- ++(-6,-5) to[out=219.8, in=219.8, looseness=2.5] ++(10*12/61, -12*12/61) -- ++(6,5);
\end{scope}
\begin{scope}[every path/.style = {blue}]
\draw (start) ++ (6,6) to[out=270,in=90] ++(1.25,-5) -- coordinate[pos=0] (a1) coordinate[pos=0.5] (b1) ++(0,-1);
\draw (start) ++ (6,6) ++ (1.25,6) -- coordinate[pos=0.1] (a2) coordinate[pos=0.5] (b2) ++(0,-6) to[out=270,in=90] ++(-1.25,-6);
\draw (start) ++ (6,6) ++ (0,6) -- coordinate[pos=0.1] (a3) coordinate[pos=0.5] (b3) ++(0,-6);
\foreach \i in {1,2,3} { \draw[-latex] (a\i) -- (b\i); }
\end{scope}
\draw[fill=black] (6,6) circle (0.25);
\end{scope}
\draw[-latex] (4.25,-11) -- node[above,midway] {\moveref{1}} ++(1,0);
\begin{scope}[xshift=5.5cm,yshift=-13cm, scale=1/3]
\draw[ultra thick, fill=white] (0,0) coordinate (start) rectangle ++(12,12);
\node[below] at (6,0) {$\mathrm{rank}=8$};
\clip (start) rectangle ++(12,12);
\begin{scope}[every path/.style = {red}]
  \draw (start) ++ (8,12) -- ++(-12,-6);
  \draw (start) ++ (8,0) -- ++(4,2) ++ (-12,0) ;
  \draw (start) -- ++(6,5) to[out=39.8, in=39.8, looseness=4] ++(-5*12/61, 6*12/61) -- ++(-6,-5);
  \draw (start) ++ (12,12) -- ++(-6,-5) to[out=219.8, in=219.8, looseness=2.5] ++(10*12/61, -12*12/61) -- ++(6,5);
\end{scope}
\begin{scope}[every path/.style = {blue}]
\draw (start) ++ (6,6) to[out=270,in=90] ++ (0.375,-3) to[out=270,in=90] ++ (-0.375,-3) ++(0,12) -- coordinate[pos=0.4] (a1) coordinate[pos=0.5] (b1) ++ (0,-6);
\draw (start) ++ (6,6) ++(1.25,6) -- coordinate[pos=0.4] (a2) coordinate[pos=0.5] (b2) ++(0,-6) to[out=270,in=90] ++(-0.375,-3) to[out=270,in=90] ++(0.375,-3);
\foreach \i in {1,2} { \draw[-latex] (a\i) -- (b\i); }
\end{scope}
\draw[fill=black] (6,6) circle (0.25);
\end{scope}
\draw[-latex] (9.75,-11) -- node[above,midway] {\moveref{3}} ++(1,0);
\begin{scope}[xshift=11cm,yshift=-13cm, scale=1/3]
\draw[ultra thick, fill=white] (0,0) coordinate (start) rectangle ++(12,12);
\node[below] at (6,0) {$\mathrm{rank}=5$};
\clip (start) rectangle ++(12,12);
\begin{scope}[every path/.style = {red}]
  \draw (start) ++ (8,12) -- ++(-12,-6);
  \draw (start) ++ (8,0) -- ++(4,2) ++ (-12,0) ;
  \draw (start) -- ++(6,5) to[out=39.8, in=39.8, looseness=4] ++(-5*12/61, 6*12/61) -- ++(-6,-5);
  \draw (start) ++ (12,12) -- ++(-6,-5) to[out=219.8, in=219.8, looseness=2.5] ++(10*12/61, -12*12/61) -- ++(6,5);
\end{scope}
\begin{scope}[every path/.style = {blue}]
\draw (start) ++ (6,12) -- coordinate[pos=0.8] (a1)  coordinate[pos=0.85] (b1) ++(0,-12);
\foreach \i in {1} { \draw[-latex] (a\i) -- (b\i); }
\end{scope}
\draw[fill=black] (6,6) circle (0.25);
\end{scope}
\end{tikzpicture}
\caption{Taking a pair of curves $\Gamma_P$ and $\Gamma_C$, shown at the top in the universal cover of $T_{\bullet}$, and then applying a sequence of the moves (i)-(v) in order to turn $\Gamma_P$ into $\Gamma_\mu$ without ever increasing the intersection number with $\Gamma_C$.}\label{fig:example-moves}
\end{figure}

Move \moveref{1} does not change the intersection number, since it only modifies $\Gamma$ in a small neighborhood of the self-intersection point (we can assume these do not coincide with intersections with $\Gamma_C$). Move \moveref{2}, which does not occur in \cite{hrw}, is similar. Clearly \moveref{3} cannot increase the intersection number with $\Gamma_C$, and \moveref{4} does not increase intersection number since pulling tight realizes minimal intersection within a homotopy class of curves.

Move \moveref{5} also cannot increase intersection number, but this is somewhat subtle: if a corner of $\Gamma_C$ lies within a corner of $\Gamma$ as in \cite[Figure 55(a)]{hrw}, then pulling that corner of $\Gamma$ through the peg introduces two new intersection points.  However, in this case we can argue as in \cite{hrw} that for each such corner of $\Gamma_C$, there must be another corner at which this move eliminates two intersection points. (Note that this argument fails if the relevant corner of $\Gamma$ is a straight corner.) 

In addition to not increasing intersection number with $\Gamma_C$, we also note that moves \moveref{1}, \moveref{2}, \moveref{4}, and \moveref{5} do not change the homology class of $\Gamma$.

In carrying out our strategy, let us first consider the case that $\Gamma_P$ consists of a single immersed arc. By repeatedly applying moves \moveref{4} and \moveref{5}, we can replace $\Gamma_P$ with a pulled tight, immersed arc $\Gamma$ for which all corners are straight.  Since the total homology class of $\Gamma_P$ (and thus of $\Gamma$) is a nonzero multiple of $[\Gamma_\mu]$, it follows that $\Gamma$ lies in a neighborhood of the arc $\Gamma_\mu$. By resolving all self-intersection points via move \moveref{1} and pulling tight, we obtain a copy of $\Gamma_\mu$ possibly along with some compact components. Any compact components can be deleted by move \moveref{3}, so we are done.

It remains to consider the case that $\Gamma_P$ contains more than a single immersed arc. In this case, we first delete all compact components via move  \moveref{3}. These components were  nullhomologous, so the resulting curve $\Gamma$ still has homology class equal to a nonzero multiple of $[\Gamma_\mu]$. We then apply move \moveref{2} repeatedly until there is a single arc component, reducing to the previous case. 
\end{proof}

\begin{corollary}
\label{cor:satcover}
Let $K\subset S^3$ be a fibered satellite knot with companion $C$, whose pattern has winding number $w$. Suppose that $n$ is a natural number relatively prime to $w$ such that $\Sigma_n(K)$ is a rational homology sphere. Then \[\dim\HFK(\Sigma_n(C),C_n)\leq \dim\HFK(\Sigma_n(K),K_n).\]
\end{corollary}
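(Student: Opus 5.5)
The plan is to combine Proposition~\ref{prop:branched-cover-satellite} with Theorem~\ref{thm:gensat}.

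First I would confirm that the winding number $w$ is positive, since Proposition~\ref{prop:branched-cover-satellite} needs $w\geq 1$. This follows from Lemma~\ref{lem:fibered-satellite}, applied to $K\subset S^3$ viewed as a generalized satellite knot in the rational homology sphere $S^3$. Alternatively, by Lemma~\ref{lem:sat-longitude}, the winding number of a companion of a fibered knot arising from the Nielsen--Thurston decomposition is $m_i\geq 1$. For a general satellite decomposition, Lemma~\ref{lem:fibered-satellite} suffices: the satellite torus is incompressible and not boundary-parallel in the knot exterior, and its two sides have dual rational longitudes (the Seifert longitude of $C$ and the meridian of the solid torus). So $w\geq 1$.

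Next I would apply Proposition~\ref{prop:branched-cover-satellite}. Since $\gcd(n,w)=1$ and $\Sigma_n(K)$ is a rational homology sphere, it shows that $K_n\subset\Sigma_n(K)$ is a generalized satellite knot with companion $C_n\subset\Sigma_n(C)$, and that its pattern again has winding number $w\geq 1$. Theorem~\ref{thm:gensat}, applied with $Y=\Sigma_n(K)$ and $Z=\Sigma_n(C)$, then gives exactly
\[\dim\HFK(\Sigma_n(C),C_n)\leq \dim\HFK(\Sigma_n(K),K_n).\]

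The only point needing care is a bookkeeping one: the companion $C_n\subset Z$ produced by Definition~\ref{def:generalized-companion} must really be the lift $C_n\subset\Sigma_n(C)$. The filling is along $\alpha_{(M_2)_n}=\mu_{C_n}$, and $(M_1)_n=\Sigma_n(C)\setminus\nu(C_n)$, so filling recovers $\Sigma_n(C)$ with core $C_n$. This identification was already made in the proof of Proposition~\ref{prop:branched-cover-satellite}. I expect no real obstacle here; the fibered hypothesis serves only to guarantee $w\geq 1$.
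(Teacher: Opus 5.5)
Your proposal is correct and follows the paper's own proof: it uses Lemma~\ref{lem:fibered-satellite} to get $w\geq 1$, then Proposition~\ref{prop:branched-cover-satellite} to make $K_n$ a generalized satellite with companion $C_n$ and winding number $w$, and concludes with Theorem~\ref{thm:gensat}. Your ordering, which establishes $w\geq 1$ before invoking Proposition~\ref{prop:branched-cover-satellite}, is if anything slightly cleaner, since that proposition assumes $w\geq 1$.
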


\begin{proof}
Proposition~\ref{prop:branched-cover-satellite} says that $K_n \subset \Sigma_n(K)$ is a generalized satellite knot, with companion knot $C_n \subset \Sigma_n(C)$, and that its pattern  has also has  winding number $w$.  Note that $w$ is an integer since it is the winding number of a satellite in $S^3$, and moreover it is strictly positive by Lemma~\ref{lem:fibered-satellite} and the assumption that $K$ is fibered.  Thus the desired inequality is precisely Theorem~\ref{thm:gensat}.
\end{proof}

\section{Finitely many fibered predecessors and Gromov norm} \label{sec:main-proof}

Theorems \ref{thm:main} and \ref{thm:volume} will follow from our main technical result, Theorem \ref{thm:hfk-j-finite}, restated here:

\begin{usetheoremcounterof} {thm:hfk-j-finite}
Given natural numbers $M$ and $g$, there are only finitely many genus-$g$ fibered knots $K\subset S^3$ which satisfy \[\dim\HFK(\Sigma_{n_i}(K),K_{n_i})\leq M^{n_i}\] for some increasing sequence $\{n_i\}_{i\in\mathbb{N}}$ of primes. Moreover, \begin{equation}\label{eq:gromov}\lVert S^3\setminus K \rVert \leq \tfrac{3\pi}{v_3}(2g-1)\log(M)\end{equation} for each such knot $K$.
\end{usetheoremcounterof}

In proving the part of this theorem about Gromov norm, it will be helpful to recall that $\lVert S^3\setminus K\rVert$ is given by taking the sum of the volumes of the hyperbolic pieces in the JSJ decomposition of the knot complement, and then dividing this sum by $v_3$ \cite{soma}. In particular, this norm vanishes if this complement is Seifert fibered. The key result we will use is then due to Kojima and McShane:

\begin{theorem}[{\cite[Theorem 1.1]{kojima-mcshane}}] \label{thm:kj}
If $\psi$ is a pseudo-Anosov homeomorphism of a compact orientable surface $\Sigma$ with boundary, then the volume of the interior of its mapping torus satisfies \[\mathrm{vol}(\inr M_\psi) \leq 3\pi|\chi(\Sigma)|\log(\lambda(\psi)),\] where $\lambda(\psi)$ is the dilatation of $\psi$.
\end{theorem}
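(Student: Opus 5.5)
The plan is to deduce the bound from an asymptotic comparison between hyperbolic volume and Teichmüller-theoretic translation lengths, passing through quasi-Fuchsian manifolds and their renormalized volume. This is the strategy of Kojima and McShane.

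\textbf{Setup.} First replace $\Sigma$ by its interior, a finite-type surface with punctures. Then $\inr M_\psi$ is a finite-volume cusped hyperbolic manifold, by Thurston's hyperbolization of pseudo-Anosov mapping tori. Next recall that $\psi$ acts on the Teichmüller space $\mathcal{T}(\Sigma)$ of finite-area hyperbolic structures. Its Teichmüller translation length is exactly $\log\lambda(\psi)$, realized along an invariant axis (Bers).

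\textbf{Step 1: volume of the iterates.} Fix a basepoint $X\in\mathcal{T}(\Sigma)$, and for each $n$ consider the quasi-Fuchsian manifold $Q_n = Q(X,\psi^n X)$. I would invoke the following, due to Brock and to the double limit and geometric convergence theory: the convex core volume of $Q_n$ grows like $n\cdot\mathrm{vol}(\inr M_\psi)+o(n)$. The reason is that $Q_n$ geometrically approximates the infinite cyclic cover of $M_\psi$ over longer and longer stretches. I would then compare convex core volume with the renormalized volume $V_R(Q_n)$. The two differ by at most a constant multiple of $|\chi(\Sigma)|$, independent of $n$ (Schlenker; Bridgeman--Canary).

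\textbf{Step 2: upper bound on renormalized volume.} Use Schlenker's inequality
\[ V_R(Q(X,Y))\leq 3\sqrt{\tfrac{\pi|\chi(\Sigma)|}{2}}\; d_{WP}(X,Y). \]
Combine it with the comparison between the Weil--Petersson and Teichmüller metrics,
\[ d_{WP}\leq \sqrt{2\pi|\chi(\Sigma)|}\; d_{T}. \]
Taking $Y=\psi^nX$, and choosing $X$ on the Teichmüller axis of $\psi$ so that $d_T(X,\psi^nX)=n\log\lambda(\psi)$, we obtain
\[ V_R(Q_n)\leq 3\sqrt{\tfrac{\pi|\chi|}{2}}\cdot\sqrt{2\pi|\chi|}\; n\log\lambda(\psi) = 3\pi|\chi(\Sigma)|\, n\log\lambda(\psi). \]

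\textbf{Step 3: conclusion.} Combining Steps 1 and 2, dividing by $n$, and letting $n\to\infty$ kills the additive constants. This leaves $\mathrm{vol}(\inr M_\psi)\leq 3\pi|\chi(\Sigma)|\log\lambda(\psi)$.

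\textbf{Main obstacle.} The hardest step is Step 1. It requires showing that convex core volume is asymptotically linear in $n$ with slope exactly the mapping torus volume. One also needs every ingredient (Schlenker's bound, the renormalized-versus-core comparison, and the WP--Teichmüller comparison) to hold in the punctured, cusped setting with constants independent of $n$. To handle Step 1, I would first use the geometric convergence of $Q_n$, suitably rebased at the middle of the convex core, to the doubly degenerate cover of $M_\psi$. I would then combine this with uniform bounded-geometry control of the convex core away from the ends, via Minsky's model, so that the volume contributed per fundamental domain converges.
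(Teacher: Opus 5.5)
Your proposal is correct and is essentially the Kojima--McShane argument, which the paper does not reprove but only cites. The ingredients match: the linear growth $V_C(Q(X,\psi^nX)) = n\,\mathrm{vol}(\inr M_\psi)+o(n)$, the bounded discrepancy between convex core and renormalized volume, Schlenker's Weil--Petersson bound, Linch's comparison $d_{WP}\le\sqrt{2\pi|\chi|}\,d_T$, and $d_T(X,\psi^nX)=n\log\lambda(\psi)$ on the axis, with constants multiplying to exactly $3\pi|\chi(\Sigma)|$. The two points you single out are indeed the substantive external inputs that argument depends on: the asymptotic in Step 1, and the validity of the renormalized-volume estimates for quasi-Fuchsian manifolds with rank-one cusps.
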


\begin{proof}[Proof of Theorem \ref{thm:hfk-j-finite}]
Let $\mathcal{S}_{M,g}$ be the set of genus-$g$ fibered knots $K\subset S^3$ such that \[\dim\HFK(\Sigma_{n_i}(K),K_{n_i})\leq M^{n_i}\] for some increasing sequence $\{n_i\}_{i\in\mathbb{N}}$ of  primes. Let us fix $M\in \mathbb{N}$ and prove by induction on $g$ that $\mathcal{S}_{M,g}$ is finite and that any knot in $\mathcal{S}_{M,g}$ satisfies the Gromov norm inequality \eqref{eq:gromov}, for all $g\in \mathbb{N}$.

For the base case $g=1$, recall that the only genus-1 fibered knots in $S^3$ are the trefoils and the figure-eight. This shows that $S_{M,1}$ is finite. For the trefoils, the left hand side of \eqref{eq:gromov} is zero since trefoil complements are Seifert fibered. If the figure-eight $4_1$ is in $S_{M,1}$ then $M\geq 2$, since $M=1$ would imply that the lift $(4_1)_{n_i}$ is an unknot in its corresponding branched cover. Then, since $4_1$ has hyperbolic complement with volume $2v_3$, we see explicitly that \[\lVert S^3\setminus 4_1\rVert = \tfrac{1}{v_3}\mathrm{vol}(S^3\setminus 4_1) = 2< \tfrac{3\pi}{v_3}\log(2) \leq \tfrac{3\pi}{v_3}(2g-1)\log(M),\] and hence \eqref{eq:gromov} is satisfied, completing the base case.
 
Now let $g\geq 2$. Suppose that $K\in \mathcal{S}_{M,g}$, and let $h:\Sigma\to \Sigma$ be its monodromy. We know from geometrization for Haken manifolds that $K$ is either a torus knot, a hyperbolic knot, or a satellite knot \cite[Corollary 2.5]{thurston-kleinian}. We will examine these cases in turn, proving in each case that there are only finitely many possibilities for $K$ and that $K$ satisfies \eqref{eq:gromov}.

\vspace{1em}\noindent
\underline{Case I: $K$ is a torus knot.} There are finitely many possibilities for $K$ since there are only finitely many torus knots of a given genus: the torus knot $T(p,q)$ has genus \[g(T(p,q)) = \frac{(|p|-1)(|q|-1)}{2},\] where $|p|,|q| \geq 2$. Moreover, the left hand side of \eqref{eq:gromov} is zero in this case since torus knots have Seifert fibered exteriors.

\vspace{1em}\noindent
\underline{Case II: $K$ is hyperbolic.} In this case, $h$ is freely isotopic to a pseudo-Anosov homeomorphism $\psi:\Sigma\to \Sigma$. To prove that there are only finitely many possibilities for $K$, it suffices to bound the dilatation $\lambda(\psi)$ independently of $K$, since (1) up to conjugation, there are only finitely many pseudo-Anosov homeomorphisms of the surface $\Sigma$ with dilatation less than a fixed constant \cite{ivanov-bounded-dilatation}, and (2) per \cite{gordon-luecke-complement}, the knot $K$ is determined by its complement \[S^3\setminus \nu(K)\cong M_\psi, \] which is determined by the conjugacy class of $\psi$. 

A well-known result of Thurston \cite{thurston-surfaces} (see also \cite{flp,ivanov-entropy} and \cite[Lemma 2.4]{bs-ribbon}) says that the dilatation of $\psi$ is determined by the numbers of fixed points of the iterates of $\psi$, \begin{equation}\label{eq:ni-fixed}\lambda(\psi) = \lim_{n\to\infty}\big(\#\textrm{Fix}(\psi^n)\big)^{\frac{1}{n}}= \lim_{i\to\infty}\big(\#\textrm{Fix}(\psi^{n_i})\big)^{\frac{1}{n_i}}.\end{equation}
Note that the lift $K_{n_i}$ is fibered with monodromy freely isotopic to $\psi^{n_i}:\Sigma\to\Sigma$. It follows that \[\#\textrm{Fix}(\psi^{n_i}) \leq \dim\HFK(\Sigma_{n_i}(K),K_{n_i})\leq M^{n_i},\] by the work of Ni \cite{ni-fixed} and Ghiggini--Spano \cite{ghiggini-spano}. Combined with \eqref{eq:ni-fixed}, this shows that $\psi$ has dilatation bounded independently of $K$, \[\lambda(\psi)\leq M,\] as desired, proving the finiteness of such $K$. Moreover, Theorem \ref{thm:kj} then implies that \[ \lVert S^3\setminus K \rVert = \tfrac{1}{v_3}\mathrm{vol}(S^3\setminus K) \leq \tfrac{3\pi}{v_3}(2g-1)\log(\lambda(\psi))\leq \tfrac{3\pi}{v_3}(2g-1)\log(M),\] establishing \eqref{eq:gromov} in this case.

\vspace{1em}\noindent
\underline{Case III: $K$ is a satellite knot.} In this case, $h$ is freely isotopic to a map $\phi:\Sigma\to\Sigma$ in Nielsen--Thurston form, with nonempty reducing set $\Gamma$. Let us adopt the notation from \S\ref{sec:reducing-curves}. In particular, let $\varphi_0:\Sigma_0\to\Sigma_0$ be the restriction of $\varphi$ to the outermost component. Then $\varphi_0$ is freely isotopic to either a periodic map or a pseudo-Anosov map. We will address these two subcases in turn.

\underline{Subcase III.1: $\varphi_0$ is freely isotopic to a periodic map.}
In this case, \cite[Proposition 2.3]{bs-traces} says that $K$ is either a connected sum of at least two nontrivial fibered knots, \[K= K_1\#\dots \# K_m,\] or $K$ is the $(p,q)$-cable of some knot $C\subset S^3$ with $|q|\geq 2$, in which case the genus and number of boundary components of $\Sigma_0$ are given by \begin{equation}\label{eq:topology}g(\Sigma_0) = g(T(p,q)) \quad \textrm{and} \quad |\partial\Sigma_0| = |q|+1.\end{equation}  

In the connected sum case, we can view $K$ as a satellite with companion $K_j$ and winding number $1$ pattern, for each $j=1,\dots,m$. Then Corollary \ref{cor:satcover} implies that \[\dim\HFK(\Sigma_n(K_j),(K_j)_n)\leq \dim\HFK(\Sigma_n(K),K_n)\] for every $n\geq 1$ for which $\Sigma_n(K)$ is a rational homology sphere. It follows from Proposition \ref{prop:resultants} that the latter condition is met, and in fact that $\Sigma_n(K)$ is a $\Z/2$-homology sphere, for all primes $n$ which are not contained in the finite set $S_{K,2}$.
In particular, we may assume without loss of generality that the inequality above, and hence \[\dim\HFK(\Sigma_{n_i}(K_j),(K_j)_{n_i})\leq M^{n_i},\] holds for all of the primes $n_i$.  Since $g(K_j)<g$, we conclude by induction that there are finitely many possibilities for each $K_j$. It then follows that there are only finitely many possibilities for $K$, as desired. We also have by induction that \eqref{eq:gromov} holds for each $K_j$. Since \[\lVert S^3\setminus K \rVert = \lVert S^3\setminus K_1 \rVert + \dots + \lVert S^3\setminus K_m \rVert\] and $g = g(K_1)+ \dots + g(K_m)$, it follows immediately that \eqref{eq:gromov}  holds for $K$ as well.

In the cable case, note that there are finitely many possibilities for the topology of $\Sigma_0$ since it's a subsurface of $\Sigma$. This then implies that there are finitely many possibilities for $(p,q)$ by \eqref{eq:topology}, so it suffices to show that there are only finitely many possibilities for the companion knot $C$ for each such pair $(p,q)$. The winding number of the pattern in this case is $|q|\geq 2$. Corollary \ref{cor:satcover} then implies that \[\dim\HFK(\Sigma_n(C),C_n)\leq \dim\HFK(\Sigma_n(K),K_n)\] for all $n$ relatively prime to $|q|$ for which  $\Sigma_n(K)$ is a rational homology sphere. These two conditions are met in particular by all but the finitely many primes $n$ which either divide $|q|$ or are contained in the finite set $S_{K,2}$ of Proposition \ref{prop:resultants}, so we may assume without loss of generality that it holds for all of the primes $n_i$. Then we have  \[\dim\HFK(\Sigma_{n_i}(C),C_{n_i})\leq M^{n_i}\] for all $i$. Since \[g=g(K) = |q|g(C) + g(T(p,q))>g(C),\] we conclude by induction that there are only finitely many possibilities for $C$ and hence for $K$, as desired. We also have by induction that \eqref{eq:gromov} holds for $C$. Since $M_{\varphi_0}$ is Seifert fibered in this case, we have that $\lVert S^3\setminus K \rVert = \lVert S^3\setminus C \rVert$, and hence that \[\lVert S^3\setminus K \rVert =\lVert S^3\setminus C \rVert\leq \tfrac{3\pi}{v_3}(2g(C)-1)\log(M) \leq \tfrac{3\pi}{v_3}(2g-1)\log(M),\] proving \eqref{eq:gromov} for $K$ as well.

\underline{Subcase III.2: $\varphi_0$ is freely isotopic to a pseudo-Anosov map $\psi_0$.}
Assume the notation of \S\ref{sec:reducing-curves}.  In order to establish that there are only finitely many possibilities for $K$, it suffices by Theorem \ref{thm:determine-C} to prove that there are only finitely many possibilities for the companion knots $C^1,\dots,C^k$ and the conjugacy class of $\psi_0$. According to Lemma \ref{lem:sat-longitude}, the companion $C^j$ has pattern $P^j$ with winding number $m_j\geq 1$. This winding number is bounded above by $g$, since $C^j$ is nontrivial and  \[g=m_jg(C^j)+ g(P^j(U)) \geq m_jg(C^j), \] where $P^j(U)\subset S^3$ is the knot obtained by applying the pattern $P^j$ to the unknot $U$. In particular, all but finitely many $n_i$ are relatively prime to $m_j$, so we may assume without loss of generality that all of them are, in which case Corollary \ref{cor:satcover} implies that \[\dim\HFK(\Sigma_{n_i}(C^j),(C^j)_{n_i})\leq \dim\HFK(\Sigma_{n_i}(K),K_{n_i}) \leq M^{n_i}\] for all $i$. Hence, since $g(C^j)<g$, there are only finitely many possibilities for each $C^j$, by induction. Moreover, we have that each $C^j$ satisfies the Gromov norm bound \eqref{eq:gromov}.

It remains to show that there are only finitely many possibilities for the conjugacy class of $\psi_0$, and since there are only finitely many possibilities for the topology of $\Sigma_0$, it suffices for this to prove that the dilatation $\lambda(\psi_0)$ is bounded above independently of $K$, just as in the case that $K$ was hyperbolic. We bound this dilatation in almost exactly the same manner as in that case. Namely, the number of fixed points of $\psi_0^n$ is bounded above by the Nielsen number of $\varphi^{n_i}$, which is bounded above by the knot Floer homology of the lift $K_{n_i}$ by \cite{ni-fixed,ghiggini-spano}, and so we have that \[\#\textrm{Fix}(\psi_0^{n_i})\leq N(\varphi^{n_i})\leq \dim\HFK(\Sigma_{n_i}(K),K_{n_i})\leq M^{n_i},\] and hence that \begin{equation}\label{eq:psi-dil}\lambda(\psi_0) = \lim_{i\to \infty} \big(\#\textrm{Fix}(\psi_0^{n_i})\big)^{\frac{1}{n_i}}\leq M,\end{equation} as desired. This completes the proof that there are only finitely many $K$.

For the Gromov norm bound, note that Theorem \ref{thm:kj} and \eqref{eq:psi-dil} give the inequalities \[\lVert \inr M_{\psi_0}\rVert = \tfrac{1}{v_3}\mathrm{vol}(\inr M_{\psi_0}) \leq \tfrac{3\pi}{v_3}|\chi(\Sigma_0)|\log(\lambda(\psi_0))\leq \tfrac{3\pi}{v_3}|\chi(\Sigma_0)|\log(M).\] Now, the fiber surface $\Sigma$ for $K$ is the union of $\Sigma_0$ with $m_j$ copies of the fiber surface of $C^j$, for each $j=1,\dots,k$, as described in \S\ref{sec:reducing-curves}. It follows that the negative Euler characteristic of $\Sigma$ is given by \[2g-1 = -\chi(\Sigma_0) + m_1(2g(C^1)-1) + \dots + m_k(2g(C^k)-1),\] which implies that \[2g-1 \geq |\chi(\Sigma_0)| + (2g(C^1)-1) + \dots + (2g(C^k)-1).\] Then, since \[\lVert S^3\setminus K\rVert =  \lVert \inr M_{\psi_0}\rVert + \lVert S^3\setminus C^1\rVert + \dots + \lVert S^3\setminus C^k\rVert,\] and  \eqref{eq:gromov} holds for each $C^j$ by induction, it follows that \eqref{eq:gromov} holds for $K$ as well. 
\end{proof}

We may now prove Theorem \ref{thm:main} and \ref{thm:volume} at the same time:

\begin{proof}[Proofs of Theorems \ref{thm:main} and  \ref{thm:volume}]
Let $K\subset S^3$ be a knot of genus $g$ and arc index $\delta$. Suppose that $J\subset S^3$ is a fibered knot with $J\leq K$. Then
\[ g(J) = \deg \Delta_J(t) \leq \deg \Delta_K(t) \leq g, \]
where the first two relations hold because $J$ is fibered and because $\Delta_J(t)$ divides $\Delta_K(t)$ \cite{gilmer}, respectively.  Since both theorems hold automatically for the unknot, it thus suffices to show for each $k=1,\dots,g$ that there are only finitely many such $J$ of genus $k$, proving Theorem \ref{thm:main}; and that $J$ satisfies \[\lVert S^3\setminus J\rVert \leq \tfrac{3\pi}{v_3}(2g-1)\log(\delta!),\] proving Theorem \ref{thm:volume}.

Theorem~\ref{thm:hfk-cover} provides a finite set $S$ of prime numbers such that 
\[\dim \HFK(\Sigma_n(J),J_n) \leq \dim\HFK(\Sigma_n(K), K_n)\] as long as $n$ is not a multiple of any prime in $S$, for any  $J\leq K$ as above. Enumerating the primes $n > \max(S)$ in order as $n_1,n_2,n_3,\dots$, we then have in particular that
\[ \dim \hfkhat(\Sigma_{n_i}(J), J_{n_i}) \leq \dim \hfkhat(\Sigma_{n_i}(K), K_{n_i}). \]
At the same time, we can appeal to \cite[Lemma~2.1]{bs-ribbon} (which uses a construction of multi-pointed Heegaard diagrams for branched cyclic covers of knots in $S^3$, due to Levine \cite{levine-cyclic}) to assert that
\[ \dim \hfkhat(\Sigma_{n_i}(K), K_{n_i}) \leq \frac{(\delta!)^{n_i}}{2^{\delta-1}} \leq (\delta!)^{n_i}. \]
Combining these inequalities, it follows that
\[ \dim \hfkhat(\Sigma_{n_i}(J), J_{n_i}) \leq (\delta!)^{n_i}, \] for all $n_i$.
Then Theorem~\ref{thm:hfk-j-finite} says that there are only finitely many genus-$k$ fibered knots $J$ that satisfy this condition for each $k=1,\dots,g$, and that each such $J$ satisfies \[\lVert S^3\setminus J\rVert \leq \tfrac{3\pi}{v_3}(2k-1)\log(\delta!)\leq \tfrac{3\pi}{v_3}(2g-1)\log(\delta!),\] proving both theorems.
\end{proof}

\bibliographystyle{myalpha}
\bibliography{References}

\end{document}